\theoremstyle{plain}
\newtheorem{lemma}{Lemma}[section]
\newaliascnt{thmCt}{lemma}
\newtheorem{theorem}[thmCt]{Theorem}
\newaliascnt{corCt}{lemma}
\newtheorem{corollary}[corCt]{Corollary}
\newaliascnt{prpCt}{lemma}
\newtheorem{proposition}[prpCt]{Proposition}
\newtheorem*{proposition*}{Proposition}
\newtheorem*{definition*}{Definition}
\newcounter{theoremintro}
\newtheorem{thmintro}[theoremintro]{Theorem}
\theoremstyle{definition}
\newaliascnt{dfnCt}{lemma}
\newaliascnt{rmkCt}{lemma}
\newtheorem{remark}[rmkCt]{Remark}
\newaliascnt{rmksCt}{lemma}
\newaliascnt{qstCt}{lemma}
\newaliascnt{exaCt}{lemma}
\newtheorem{example}[exaCt]{Example}
\newaliascnt{exasCt}{lemma}
\newtheorem{examples}[exasCt]{Examples}
\newaliascnt{paraCt}{lemma}
\newtheorem{parag}[paraCt]{}
\newcommand{\Cs}{{$\mathrm{C}^*$-al\-ge\-bra}}
\newcommand{\id}{\mathrm{id}}
\newcommand{\WLambda}{\Lambda_{\mathrm W}}
\newcommand{\RLambda}{\Lambda}
\newcommand{\preS}{{\mathcal S}}
\newcommand{\SR}{{\mathrm S}}
\newcommand{\preCP}{\mathcal{CP}}
\newcommand{\CP}{\mathrm{CP}}
\newcommand{\CH}{\mathrm{CH}}
\newcommand{\preCH}{\mathcal{CH}}
\newcommand{\W}{{\mathrm W}}
\newcommand{\V}{{\mathrm V}}
\newcommand{\Cu}{\mathrm{Cu}}
\newcommand{\SCu}{\mathrm{SCu}}
\newcommand{\mvns}{\precsim_{\textrm{MvN}}}
\newcommand{\Z}{{\mathbb Z}}
\newcommand{\N}{{\mathbb N}}
\newcommand{\ol}{\overline}
\title{The Cuntz semigroup of a ring}
\date{\today}
\author[R.~Antoine, P.~Ara, J.~Bosa, F.~Perera, and E.~Vilalta]{Ramon Antoine\and 
	Pere Ara\and
	Joan Bosa\and
	Francesc Perera\and
	Eduard Vilalta}
\address{Ramon Antoine,
	Departament de Matem\`{a}tiques,
	Universitat Aut\`{o}noma de Bar\-ce\-lo\-na,
	08193 Bellaterra, Barcelona, Spain, and
	Centre de Recerca Matem\`atica, Edifici C, Campus de Bellaterra, 08193 Bellaterra, Barcelona, Spain}
\email[]{ramon.antoine@uab.cat}
\address{Pere Ara, 
	Departament de Matem\`{a}tiques,
	Universitat Aut\`{o}noma de Bar\-ce\-lo\-na,
	08193 Bellaterra, Barcelona, Spain, and
	Centre de Recerca Matem\`atica, Edifici C, Campus de Bellaterra, 08193 Bellaterra, Barcelona, Spain}
\email[]{pere.ara@uab.cat}
\address{Joan Bosa,
Departamento de Matem\'{a}ticas,
Universidad de Zaragoza,
50009 Zaragoza, Zaragoza, Spain.}
\email{jbosa@unizar.es}
\urladdr{personal.unizar.es/jbosa/}
\address{Francesc Perera, 
	Departament de Matem\`{a}tiques,
	Universitat Aut\`{o}noma de Bar\-ce\-lo\-na,
	 08193 Bellaterra, Barcelona, Spain, and
	Centre de Recerca Mate\-m\`a\-tica, Edifici C, Campus de Bellaterra,  08193 Bellaterra, Barcelona, Spain}
\email[]{francesc.perera@uab.cat}
\urladdr{https://mat.uab.cat/web/perera}
\address{Eduard Vilalta, 
The Fields Institute for Research in Mathematical Sciences, M5T 3J1 Toronto, Canada}
\email[]{evilalta@fields.utoronto.ca}
\urladdr{www.eduardvilalta.com}
\thanks{All authors were partially supported by MINECO (grant No.\ PID2020-113047GB-I00), and by the Comissionat per Universitats i Recerca de la Generalitat de Catalunya (grants No.\ 2017SGR01725 and 2021SGR01015). The last named author was also supported by MINECO grant No.\ PRE2018-083419 and by the Fields Institute for Research in Mathematical Sciences.}
\subjclass[2020]%
{Primary
16D10, % General module theory in associative algebras
16B99, % Associative rings and algebras - General and miscellaneous
06F05; % Ordered semigroups and monoids
Secondary
46L05. % General theory of C*-algebras
}
\keywords{Associative rings, projective modules, $C^*$-algebras, Cuntz semigroups.}
\begin{document}

%==========================================================================================
\begin{abstract}
For any ring $R$, we introduce an invariant in the form of a partially ordered abelian semigroup $\SR(R)$ built from an equivalence relation on the class of countably generated projective modules. We call $\SR(R)$ the Cuntz semigroup of the ring $R$. This construction is akin to the manufacture of the Cuntz semigroup of a C*-algebra using countably generated Hilbert modules. To circumvent the lack of a topology in a general ring $R$, we  deepen our understanding of countably projective modules over $R$, thus uncovering new features in their direct limit decompositions, which in turn yields two equivalent descriptions of $\SR(R)$. The Cuntz semigroup of $R$ is part of a new invariant $\SCu(R)$ which includes an ambient semigroup in the category of abstract Cuntz semigroups that provides additional information. We provide computations for both $\SR(R)$ and $\SCu(R)$ in a number of interesting situations, such as unit-regular rings, semilocal rings, and in the context of nearly simple domains. We also relate our construcion to the Cuntz semigroup of a C*-algebra.
\end{abstract}

\maketitle
\date{\today}

%=========================================================
\section{Introduction}

The study of a ring using the collection of its projective (right) modules is central in modern algebra. Much attention has been directed to finitely generated projective modules, mostly with K-Theory in mind, since for a unital ring $R$ the Grothendieck group $\mathrm{K}_0(R)$ is constructed out of the monoid $\V(R)$ of isomorphism classes of such modules. There has also been an intensive use of countably generated projective modules. This may be justified keeping in mind that, by a well known theorem of Kaplansky, any projective module is a direct sum of countably generated projective ones. In this case, one might use the monoid $\V^*(R)$ of isomorphism classes of countably generated projective modules to analize the ring $R$. It is worth noticing that both monoids $\V(R)$ and $\V^*(R)$ are naturally equipped with the so-called algebraic order, given by complements.

The structure of $\V^*(R)$ has attracted considerable attention in the last years; see, for instance, \cite{HPCrelle} and \cite{HPTrans}. For a semilocal ring, and following a result obtained in \cite{Prihoda2007}, one has that $\V^*(R)$ can be viewed as a submonoid of $\V^*(R/J(R))$, which in turn is isomorphic to $\overline{\N}^r$ for a suitable $r$, where $\overline{\N}=\N\cup\{\infty\}$ with the obvious operations. A relevant problem is then to determine which submonoids of $\overline{\N}^r$ are realized by semilocal rings. A full characterization of such submonoids in the noetherian case is obtained in \cite[Theorem 2.6]{HPCrelle}, as those submonoids defined by a system of equations in the sense of \cite[Definition 2.5]{HPCrelle}. (We note that previous results for semilocal rings, but for the monoid $\V(R)$ and full affine submonoids of $\N^r$, were already obtained in \cite{FH2000}.) Further progress was carried out in \cite[Theorem 1.6]{HPTrans} for not necessarily noetherian semilocal rings. There, the authors studied countably generated projective modules that are finitely generated modulo the Jacobson radical and showed that they appear in a wide variety of situations.

Our aim here is to introduce an object $\SR(R)$ that can also be built out of countably generated projective modules, albeit using a  relation weaker than isomorphism. Its construction is inspired by that of the Cuntz semigroup of a C*-algebra, as the latter possesses a rich ordered structure that extends the algebraic order, and has played an important role in the theory of C*-algebras in recent years. Let us review this construction in relation to the main theme of this paper.

For the class of C*-algebras, that is, self-adjoint, norm-closed subalgebras of the algebra of bounded operators on a Hilbert space, we encounter in countably gene\-ra\-ted Hilbert modules the  analytic siblings  of countably gene\-ra\-ted projective modules. Roughly speaking, a Hilbert module over a C*-algebra $A$ is an $A$-module, together with an $A$-valued inner product, which is complete with respect to a suitable norm. The $A$-module $A^{(\N)}$, with a natural inner product, gives rise to the standard Hilbert module $H_A$. It is a celebrated theorem due to Kasparov that any countably generated Hilbert $A$-module $H$ is isometrically isomorphic to a complement of $H_A$; see \cite{Kasparov80}. From this point of view, countably generated Hilbert modules play a role akin to countably generated projective modules for C*-algebras. In fact, an algebraically finitely generated Hilbert module is a finitely generated projective module; see, e.g. \cite[Theorem 3.12]{APT2011}. Further, the monoid of isomorphism classes of finitely generated Hilbert $A$-modules is isomorphic to the monoid $\V(A)$ of the C*-algebra, as shown in \cite[Proposition 3.10]{APT2011}. We also remark that, as proved recently by Brown and Lin in \cite{BroLin23arX}, over a separable C*-algebra  every countably generated Hilbert module is projective (with bounded module maps as morphisms). This is a step forward in the direction of characterizing projective Hilbert modules over a C*-algebra.

An equivalence relation among countably generated Hilbert $A$-modules, weaker than isomorphism, was studied in \cite{Coward2008}. In there, the authors proved that the monoid arising from said equivalence relation may be identified with the complete Cuntz semigroup invariant $\Cu(A)$ of the C*-algebra  $A$.  (The terminology `semigroup' is used for historical reasons, although in fact $\Cu(A)$ is a monoid.)  The original (uncomplete) Cuntz semigroup $\W_\mathrm{C}(A)$ was constructed by Cuntz in \cite{Cun78DimFct} using positive elements and a suitable comparison relation among them that, when restricted to idempotents, yields  the usual Murray-von Neumann comparison. In short, we say that $a$ is Cuntz subequivalent to $b$, and write $a\precsim_\mathrm{Cu}b$, provided $a$ can be approximated arbitrarily well by elements of the form $xby$.  Compared with the construction of the group $\mathrm{K}_0$, this approach is advantageous since every C*-algebra has an abundance of positive elements but may have a complete lack of idempotents.  The exact relation between $\W_\mathrm{C}(A)$ and $\Cu(A)$ may be expressed by the isomorphism $\Cu(A)\cong\W_\mathrm{C}(A\otimes\mathcal{K})$, where $\mathcal{K}$ is the algebra of compact operators on an infinite dimensional Hilbert space. Alternatively, $\Cu(A)$  may be thought of as the completion of $\W_\mathrm{C}(A)$; see \cite[Theorem 3.2.8]{APT-Memoirs2018}. It was shown in \cite{Coward2008} that $\Cu(A)$ sits in a well-behaved category of partially ordered monoids, termed $\Cu$, in which each object admits suprema of increasing sequences, among other continuity properties. Furthermore, the assignment $A\mapsto\Cu(A)$ is a continuous  functor; see \cite{Coward2008, APT-Memoirs2018}.

The Cuntz semigroup plays a prominent role in the classification programme of C*-algebras initiated by G. A. Elliott and is a key ingredient in delimiting the optimal class of such algebras amenable to classification by the Elliott invariant (that consists essentially of $\mathrm{K}_0$, $\mathrm{K}_1$, and the trace simplex). Indeed, the examples constructed by A. S. Toms in \cite{Tom08ClassificationNuclear} can be distinguished by their Cuntz semigroups, but not by a swath of other well known topological invariants for C*-algebras that include, among others, the Elliott invariant and the stable rank.

When trying to adapt the ideas above to the purely algebraic setting one has to bear in mind that, in nature, the Cuntz semigroup is an analytic object. Thus one first needs to use an algebraic analogue of Cuntz comparison for general elements in a ring. We take advantage of the approach carried out in \cite{AGPS2010}, in which one defines $x\precsim_1y$, provided $x=rys$ for some elements $r,s\in R$, in order to construct a partially ordered monoid $\W(R)$ for any weakly s-unital ring $R$; see Paragraphs \ref{para:suni} and \ref{par:WR}.  By considering suitable equivalence classes of increasing sequences with respect to the relation $\precsim_1$, we obtain a monoid $\RLambda(R)$ in the category $\Cu$ that contains $\W(R)$. The object $\RLambda(R)$ can be conveniently identified with the monoid of intervals in $\W(R)$, but it is in general too large for our purposes. This differs fundamentally from what happens in the C*-algebraic case, and the reason may be found in the lack of a topology in $R$.  To remedy this drawback, we restrict our attention to a well-behaved partially ordered submonoid $\SR(R)$ of $\RLambda(R)$ which, for a C*-algebra $A$, is resemblant to $\Cu(A)$ and its role as the completion of $\W_\mathrm{C}(A)$. We will term $\SR(R)$ the \emph{Cuntz semigroup of the ring} $R$ and this is the main object of study in this paper. At this point, we mention that the construction of the Cuntz semigroup $\W_\mathrm{C}(A)$ for a C*-algebra $A$ served as inspiration to Hung and Li to introduce in \cite{HunLi21} a semigroup for any unital ring $R$, termed the \emph{Malcolmson semigroup}, and denoted by $\W_\mathrm{M}(R)$, in order to study Sylvester rank functions over the ring $R$.  To construct this semigroup one uses a relation stronger than $\precsim_1$ and, as it turns out, for any C*-algebra $A$ the semigroup $\W_\mathrm{C}(A)$ is a homomorphic image of $\W_\mathrm{M}(A)$ via an order-preserving map; see \cite[Lemma 5.1, Proposition 5.2]{HunLi21}. We shall review this construction and its relation to our semigroup $\W(R)$ in \autoref{sec:Malc}.

To explain how one constructs the Cuntz semigroup $\SR(R)$ we take a slight detour that finally yields two equivalent pictures of the same object and spurs our motivation at the same time. More concretely, given countably generated projective modules $P$ and $Q$  over $R$, we combine the approach carried out for Hilbert modules in \cite{Coward2008} with an abstraction of the above-mentioned relation $\precsim_1$ to write $P\precsim Q$ if the inclusion of any finitely generated module $X$ of $P$ may be facto\-rized through $Q$; see \autoref{def:PprecsimQ}. By antisymmetrizating the above relation, we get an ordered abelian monoid $\CP(R)$ and a natural surjective morphism $\V^*(R)\to \CP(R)$. As we show in \autoref{sec:CompEl}, if $R$ is either unit-regular or unital and semilocal, this is an isomorphism of abelian monoids, but not of ordered monoids, as $\V^*(R)$ is algebraically ordered, but $\CP(R)$ is not, except in trivial situations. Further investigation on countably generated projective modules structure leads us to reformulate the proof, obtained in \cite{Puninski2007}, that any such module can be written as a sequential inductive limit of free modules such that, for each $n$, the $n$th transition map consists of multiplication by a matrix $x_n$ with the property that $x_n=y_{n+1}x_{n+1}x_n$ for a suitable matrix $y_{n+1}$ (hence in particular $x_n\precsim_1 x_{n+1}$). Our arguments uncover additional and crucial information in such an inductive limit decomposition and in doing so we are able to relate the monoid $\CP(R)$ to the submonoid $\SR(R)$ of $\RLambda(R)$ consisting of (suitable equivalence classes of) increasing sequences $(x_n)$ with respect to $\precsim_1$ arising from inductive limits as above:

\begin{thmintro}[\ref{thm:nonunitalSRCP}, \ref{W(R)_closed}]
	\label{thmA}
	Let $R$ be any ring. Then $\CP(R)$ and the Cuntz semigroup  $\SR(R)$ of $R$ are order-isomorphic monoids.  Moreover, every increasing sequence in $\CP(R)$ (or $\SR(R)$) has a supremum.
\end{thmintro}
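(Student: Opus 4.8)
The plan is to connect the two monoids through the inductive limit decompositions of countably generated projective modules. Given such a module $P$, the reformulation of \cite{Puninski2007} provides an expression $P \cong \varinjlim(R^{a_n}, x_n)$ in which each transition map is multiplication by a matrix $x_n$ satisfying $x_n = y_{n+1}x_{n+1}x_n$, so that $(x_n)$ is an increasing sequence for $\precsim_1$ and hence defines a class in $\SR(R) \subseteq \RLambda(R)$. I would define $\Phi([P])$ to be this class and first check that it is insensitive to the chosen decomposition: two decompositions of isomorphic modules yield sequences that are mutually cofinal for $\precsim_1$ once the isomorphism is threaded through the finite stages, so they agree in $\RLambda(R)$.

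The heart of the argument is to show that the module comparison translates exactly into the order of $\RLambda(R)$, namely that $P \precsim Q$ if and only if $\Phi([P]) \le \Phi([Q])$; establishing both implications at once yields that $\Phi$ is well defined on $\CP(R)$ (by antisymmetry of $\precsim$), injective, and order-reflecting. The key observation is that the images of the canonical maps $R^{a_n} \to P$ form a cofinal family of finitely generated submodules of $P$, so by \autoref{def:PprecsimQ} the relation $P \precsim Q$ is equivalent to the statement that each such inclusion factors through $Q$. Threading a factorization through a finite stage $R^{b_m} \to Q$ of $Q = \varinjlim(R^{b_m}, z_m)$ and using the defining relation $x_n = y_{n+1}x_{n+1}x_n$ to absorb the error terms, I would extract matrix identities witnessing $x_n \precsim_1 z_m$ compatibly with the inductive structure, which is precisely $\Phi([P]) \le \Phi([Q])$ in $\RLambda(R)$; the converse reverses this, promoting a sequence-level comparison to compatible module maps realizing the required factorizations. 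This is the step I expect to be the main obstacle, as it is exactly where the additional information carried by the decomposition must be exploited to pass between factorizations of module inclusions and $\precsim_1$-relations among matrices.

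It remains to see that $\Phi$ is surjective and additive. Surjectivity is immediate from the construction of $\SR(R)$: any sequence in it arises from an inductive limit, and that inductive limit is a countably generated projective module mapping onto the given class. Additivity follows because a direct sum $P \oplus Q$ admits the block-diagonal decomposition with transition matrices $x_n \oplus z_n$, and this block sum represents addition in $\RLambda(R)$. Hence $\Phi$ is an isomorphism of ordered monoids.

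Finally, for the existence of suprema, I would work inside $\RLambda(R)$, which lies in the category $\Cu$ and therefore admits suprema of increasing sequences. Given an increasing sequence in $\SR(R)$, I would form its supremum in $\RLambda(R)$ and show it again belongs to $\SR(R)$: realizing each term as an inductive limit of free modules and diagonalizing the resulting countable array of transition maps produces a single sequence arising from the inductive limit of the corresponding modules, which is once more countably generated and projective. Transporting this through $\Phi$ gives the supremum of the original increasing sequence in $\CP(R)$. This closure of $\SR(R)$ under suprema is the content of the second cited result.
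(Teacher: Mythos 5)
Your strategy for the isomorphism is essentially the paper's: decompose $P$ as $\varinjlim(R^{n_i},x_i\cdot)$ with $x_i=y_{i+1}x_{i+1}x_i$, send $[P]$ to $[(x_i)]$, and prove $P\precsim Q$ equivalent to $(x_i)\precsim(x_i')$ by threading factorizations of the finitely generated stages of $P$ through finite stages of $Q$; you correctly identify that equivalence as the crux, and the paper's proof of it is the computation you anticipate, carried out with the maps $g_i,\phi_i$ of \autoref{par:limit-is-projective}. Two points, however, are genuine gaps rather than omitted routine detail. First, the statement is for an arbitrary ring, while your argument lives entirely in the unital setting: the free modules $R^{a_n}$ and the Puninski-style decomposition presuppose a unit. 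For non-unital $R$ the paper defines $\CP(R)$ via unital projective $R^+$-modules $P$ with $P=PR$, proves the unital case for $R^+$, and then shows (\autoref{lem:special-rep}, \autoref{thm:nonunitalSRCP}) that the isomorphism $\CP(R^+)\cong\SR(R^+)$ restricts correctly; the nontrivial direction is that every class in $\SR(R)$, with entries in $R$ rather than $R^+$, is hit by a module satisfying $P=PR$, which requires an explicit computation of the associated idempotent. Your proposal has no counterpart to this step.

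Second, in the suprema argument you assert that diagonalizing the array of transition maps yields a sequence whose inductive limit "is once more countably generated and projective." That is precisely what must be proved: a direct limit of free modules along arbitrary $\precsim_1$-increasing transition matrices need not be projective --- projectivity is guaranteed only by the splitting condition $u_n=c_{n+1}u_{n+1}u_n$ defining $\preS(R)$, and the diagonal sequence $z_n=x_n^{(n)}$ extracted from the supremum in $\RLambda(R)$ satisfies only $z_n\precsim_1 z_{n+1}$, not that condition. (Note also that an increasing sequence in $\CP(R)$ carries no actual module maps $P_k\to P_{k+1}$, so "the inductive limit of the corresponding modules" is not defined without further work; the paper avoids this by arguing purely at the matrix level.) The missing idea, supplied in \autoref{W(R)_closed}, is to write $x_n^{(n)}=a_{n+1}x_{n+1}^{(n+1)}b_{n+1}$, replace $z_n$ by $u_n=x_n^{(n)}b_n$, and check that $u_n=(y_{n+1}^{(n)}a_{n+1})u_{n+1}u_n$ while $[(u_n)]=[(z_n)]$. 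Without this adjustment your argument does not establish that the supremum belongs to $\SR(R)$.
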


Despite the analogy of the construction of the Cuntz semigroup $\SR(R)$ of a ring $R$ with that of a C*-algebra, it is  unclear whether $\SR(R)$ is a $\Cu$-semigroup. We remedy this fact by considering the pair $\SCu(R)=(\RLambda(R), \SR(R))$. This is an instance of an object in the category $\SCu$ of pairs $(\Lambda,S)$, where $\Lambda$ is a monoid in the category $\Cu$  and $S$ is a submonoid of $\Lambda$ closed under suprema of a certain type of sequences. The definition of this new abstract category balances the fact that $\SR(R)$ might not be an object in $\Cu$ with an ambient object which does belong to the category and is still intimately related to $\SR(R)$. More concretely, we prove:

\begin{thmintro}[\ref{cor:Scufunctor}] 
	\label{thmB}
	Let $\mathrm{Rings}^{ws}$ denote the category of weakly s-unital rings.
	Then, the assignment \[
	\begin{array}{cccc}
		\SCu\colon &{\rm Rings}^{ws}&\longrightarrow&\SCu\\
		&R&\mapsto&(\Lambda (R), \SR (R))
	\end{array}
	\] is functorial.
\end{thmintro}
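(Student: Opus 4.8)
The plan is to build the functor from the bottom up, following the layered construction that passes from $\W(R)$ to $\RLambda(R)$ to $\SR(R)$, and to check at each layer that a morphism $\varphi\colon R\to S$ of weakly s-unital rings induces a compatible map. Throughout, the key observation is that $\varphi$ acts on matrices over $R$ by applying it entrywise, and that this entrywise action is compatible with $\precsim_1$: if $x=rys$ for matrices $r,s$ over $R$, then $\varphi(x)=\varphi(r)\varphi(y)\varphi(s)$, so $x\precsim_1 y$ forces $\varphi(x)\precsim_1\varphi(y)$. Since $\varphi$ also respects block-diagonal sums, it descends to an order-preserving monoid morphism $\W(\varphi)\colon\W(R)\to\W(S)$, well-definedness on Cuntz classes being immediate from the two-sided preservation of $\precsim_1$. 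That the object assignment is well defined, i.e. that $(\RLambda(R),\SR(R))$ is an object of $\SCu$, is already contained in the constructions underlying \autoref{thmA} together with its closure-under-suprema statement; so the content lies in the morphism assignment and the two functor axioms.

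First I would extend $\W(\varphi)$ to the ambient semigroup. Using the description of $\RLambda(R)$ as the monoid of intervals of $\W(R)$ (equivalently, as classes of $\precsim_1$-increasing sequences), I define $\RLambda(\varphi)$ by sending the interval generated by an increasing sequence $(x_n)$ to the interval generated by $(\varphi(x_n))$; this is visibly well defined, additive, and order-preserving. I then verify that $\RLambda(\varphi)$ is a morphism in $\Cu$. Preservation of $0$ and of suprema of increasing sequences is transparent in the sequence picture, since such a supremum is computed by a diagonal sequence and the entrywise action of $\varphi$ commutes with passing to the diagonal. Preservation of the compact-containment relation $\ll$ is, pleasantly, also direct here (in contrast with the C*-setting): $[(x_n)]\ll[(y_n)]$ is witnessed at a single stage, $x_n\precsim_1 y_m$ for all $n$ and some fixed $m$, a condition transported verbatim by $\W(\varphi)$.

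Next I would check that $\RLambda(\varphi)$ carries $\SR(R)$ into $\SR(S)$ and hence restricts to $\SR(\varphi)$; recalling that a morphism $(\RLambda_1,S_1)\to(\RLambda_2,S_2)$ in $\SCu$ is a $\Cu$-morphism of the ambient objects mapping $S_1$ into $S_2$, this produces the candidate $\SCu(\varphi)=\RLambda(\varphi)$. By \autoref{thmA} an element of $\SR(R)$ is represented by a sequence $(x_n)$ arising from an inductive-limit decomposition of a countably generated projective module, whose transition matrices satisfy $x_n=y_{n+1}x_{n+1}x_n$ for suitable $y_{n+1}$. Applying $\varphi$ entrywise preserves this identity, $\varphi(x_n)=\varphi(y_{n+1})\varphi(x_{n+1})\varphi(x_n)$, so the image sequence has the same special form. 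Conceptually this is base change $P\mapsto P\otimes_R S$ along $\varphi$, which sends countably generated projective $R$-modules to countably generated projective $S$-modules; the module viewpoint both explains why $(\varphi(x_n))$ again presents a genuine object of $\SR(S)$ and makes the restriction's preservation of the relevant suprema automatic, inherited from the ambient $\RLambda(\varphi)$ and the closure of $\SR$ under those suprema.

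Finally, functoriality is essentially formal once the above is in place: both $\SCu(\id_R)=\id$ and $\SCu(\psi\circ\varphi)=\SCu(\psi)\circ\SCu(\varphi)$ reduce, on representatives, to the corresponding identities for the entrywise action of ring homomorphisms on matrices, which are immediate. The step I expect to be the main obstacle is confirming that $\RLambda(\varphi)$ really lands in $\SR(S)$ in the strong sense required—namely that the image sequence not only satisfies the formal relation $x_n=y_{n+1}x_{n+1}x_n$ but genuinely arises from a countably generated projective $S$-module. This is exactly where the refined inductive-limit decomposition results behind \autoref{thmA} are needed, and where the cleanest argument is to identify $\SR(\varphi)$ with base change and invoke preservation of countable generation and projectivity under $-\otimes_R S$.
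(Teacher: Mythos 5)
Your treatment of the morphism assignment matches the paper's: the entrywise action of $\varphi$ on $M_\infty(R)$ preserves $\precsim_1$ and $\oplus$, hence induces a $\Cu$-morphism $\RLambda(\varphi)$, and the defining relation $x_n=y_{n+1}x_{n+1}x_n$ of $\preS(R)$ is transported verbatim, so $\RLambda(\varphi)$ carries $\SR(R)$ into $\SR(S)$. Your closing worry is unnecessary: membership in $\SR(S)$ is \emph{defined} by the existence of the matrices $y_{n+1}$ (that such a sequence presents a genuine countably generated projective module is the content of \autoref{lem:splitting} and needs no re-verification), so once the relation is preserved there is nothing more to check, and no appeal to base change is required. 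Functoriality of the assignment is then formal, as you say.

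The gap is at the object level. You assert that $(\RLambda(R),\SR(R))$ being an object of $\SCu$ is already contained in the constructions underlying \autoref{thmA} together with its closure-under-suprema statement, and conclude that ``the content lies in the morphism assignment''. But an object $(S,W)$ of $\SCu$ requires $W$ to be closed under suprema of \emph{weakly increasing} sequences of $S$ in the sense of \autoref{par:weaklyinc}, whereas \autoref{thmA} (via \autoref{W(R)_closed}) only gives closure of $\SR(R)$ under suprema of increasing sequences. A weakly increasing sequence need not be increasing, so this stronger closure does not follow formally; its suprema are produced by a separate diagonal extraction (\autoref{lma:wincsup}), and one must then show that the resulting diagonal sequence can be corrected to lie in $\preS(R)$. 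The paper does exactly this in \autoref{lma:SCuRinSCu}(i): after extracting indices $(m_k)$ with $x_{k+1}^{(m_k)}\precsim_1 x_{k+1}^{(m_{k+1})}$, one factors $x_{k+1}^{(m_k)}=a_{k+1}x_{k+1}^{(m_{k+1})}b_{k+1}$ and replaces the diagonal entries $x_k^{(m_k)}$ by $x_k^{(m_k)}b_k$, which again satisfy the $\preS$-relation and represent the same class. This is the same trick used to prove \autoref{W(R)_closed}, but it is an additional verification that your proposal skips; without it the object assignment of the functor is not justified.
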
	

In a subsequent paper (\cite{AntAraBosPerVil23arX:ContandIde}) we examine other structural properties of the object $\SCu(R)$, such as a natural notion of ideal and quotient, and how these notions parametrize the ideal lattice of a ring. There, we also show that the category $\SCu$ admits inductive limits and analyse when the assignment $R\mapsto \SCu(R)$ is continuous.

\medskip

We analyse the construction of this new Cuntz semigroup in a variety of situations. Firstly, since the original motivation of this paper comes from C*-algebra theory, we relate $\Cu(A)$ to $\SR(A)$ for any C*-algebra $A$, by showing the former is a retract of the latter, as follows:

\begin{thmintro}[\ref{thm:CuCs}]
	\label{thmC}
	Given a C*-algebra $A$, there exist ordered monoid morphisms $\varphi\colon \Cu (A) \to \SR (A)$ and $\phi\colon \SR (A)\to \Cu (A)$ that preserve suprema of increasing sequences and such that $\phi\circ\varphi=\mathrm{id}_{\Cu(A)}$.
\end{thmintro}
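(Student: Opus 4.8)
The plan is to describe both invariants through positive elements of $A\otimes\mathcal{K}$ and to play off the fact that the algebraic relation $\precsim_1$ is strictly finer than the analytic Cuntz relation $\precsim_{\mathrm{Cu}}$, yet the two coincide after an arbitrarily small shrinking by functional calculus. The cornerstone is the following consequence of R{\o}rdam's lemma (see, e.g., \cite{APT-Memoirs2018}): if $a,b\in(A\otimes\mathcal{K})_+$ and $a\precsim_{\mathrm{Cu}}b$, then $(a-\e)_+\precsim_1 b$ for every $\e>0$. Indeed, picking $x$ with $\|a-xbx^*\|<\e$ produces a contraction $d$ with $(a-\e)_+=d(xbx^*)d^*=(dx)\,b\,(x^*d^*)$, which is exactly a witness for $x=rys$. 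Taking $b=a$ gives the concrete identity $(a-\e)_+=g(a)\,a\,g(a)$ with $g(t)=\sqrt{(t-\e)_+/t}$, so $(a-\e)_+\precsim_1 a$.

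First I would define the retraction $\phi\colon\SR(A)\to\Cu(A)$. By \autoref{thmA} an element of $\SR(A)$ is represented by a $\precsim_1$-increasing sequence $(x_n)$ coming from an inductive limit of free modules. Since $\precsim_1$ implies $\precsim_{\mathrm{Cu}}$, the positive elements $x_n^*x_n$ form a $\precsim_{\mathrm{Cu}}$-increasing sequence, and I set $\phi[(x_n)]=\sup_n[x_n^*x_n]$, the supremum existing because $\Cu(A)$ is a $\Cu$-semigroup; concretely this is the Cuntz class of the Hilbert-module completion of the given inductive limit. Additivity and order preservation are clear, and independence of the representing sequence follows from the cornerstone, as two $\RLambda(A)$-equivalent sequences interleave under $\precsim_1$ up to a shrinking and hence have the same supremum in $\Cu(A)$.

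Next I would build the section $\varphi\colon\Cu(A)\to\SR(A)$. Given $[a]\in\Cu(A)$ with $a\in(A\otimes\mathcal{K})_+$, I consider the sequence $x_n=(a-1/n)_+$. A direct functional-calculus check shows $x_n=y_{n+1}\,x_{n+1}\,x_n$ with $y_{n+1}=h_n(a)$, where $h_n(t)=(t-1/(n+1))^{-1}$ for $t>1/n$; thus $(x_n)$ satisfies the Puninski-type condition isolated in \autoref{thmA} and represents a genuine element of $\SR(A)$, which I take to be $\varphi[a]$. Additivity is immediate from $((a\oplus b)-1/n)_+=(a-1/n)_+\oplus(b-1/n)_+$. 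For order preservation and well-definedness, if $a\precsim_{\mathrm{Cu}}b$ then $(a-1/n)_+\ll a\le b$ in $\Cu(A)$, so $(a-1/n)_+\precsim_{\mathrm{Cu}}(b-1/m)_+$ for some $m$, and the cornerstone upgrades this to $((a-1/n)_+-\e)_+\precsim_1(b-1/m)_+$; this is exactly the interleaving that yields $\varphi[a]\le\varphi[b]$ in $\SR(A)$, and its symmetric use gives $\varphi[a]=\varphi[b]$ whenever $a\sim_{\mathrm{Cu}}b$.

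The retraction identity is then transparent: $\phi\varphi[a]=\sup_n[x_n^*x_n]=\sup_n[(a-1/n)_+]=[a]$, using $[(a-1/n)_+^2]=[(a-1/n)_+]$ together with the standard equality $[a]=\sup_n[(a-1/n)_+]$ in $\Cu(A)$. Preservation of suprema of increasing sequences for both maps reduces, through \autoref{thmA} and the computation of suprema in $\Cu(A)$, to a routine diagonal argument interchanging the two layers of limits. The hard part will not be any single estimate but the bookkeeping of the third paragraph: one must verify that $((a-1/n)_+)$ represents an element of the submonoid $\SR(A)$ and not merely of the ambient $\RLambda(A)$, and that $\varphi$ descends to Cuntz-equivalence classes—both requiring the functional-calculus relations above to be matched precisely against the inductive-limit description of $\SR(A)$ supplied by \autoref{thmA}.
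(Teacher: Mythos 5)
Your overall strategy is the one the paper follows: the retraction $\phi([(x_n)])=\sup_n[x_n^*x_n]$ is exactly the paper's map, and your section built from the cut-downs $(a-1/n)_+$ is interchangeable with the paper's sequence $(f_{\varepsilon_n}(a))_n$ (the two sequences $\precsim_1$-interleave, and your identity $x_n=h_n(a)x_{n+1}x_n$ does verify membership in $\preS$, provided you extend $h_n$ continuously by $0$ near the origin so that $h_n(a)$ lies in $C^*(a)\subseteq M_\infty(A)$ rather than in the unitization). So the worry you flag at the end --- whether the sequence lands in $\SR(A)$ rather than merely in $\RLambda(A)$ --- is in fact already settled by your own functional-calculus computation.

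The genuine gap is elsewhere, in the domain of $\varphi$. You define $\varphi([a])=[((a-1/n)_+)_n]$ for an arbitrary $a\in(A\otimes\mathcal K)_+$, but $\SR(A)$ is built from sequences of elements of $M_\infty(A)$, and for $a\notin M_\infty(A)_+$ the cut-downs $(a-1/n)_+$ live only in $A\otimes\mathcal K$. The subset $H=\{[a]:a\in M_\infty(A)_+\}$ is a \emph{proper} dense subsemigroup of $\Cu(A)$ (already for $A=\mathbb C$ the class $\infty\in\ol\N$ has no representative in $M_\infty(\mathbb C)_+$), so your formula simply does not produce an element of $\SR(A)$ outside $H$. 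Replacing each $(a-1/n)_+$ by a Cuntz-equivalent element of $M_\infty(A)_+$ is not enough either: one must choose these replacements coherently so that the resulting sequence still satisfies the relation $y_{n+1}x_{n+1}x_n=x_n$, and that is not automatic. The paper resolves this by defining $\varphi_0$ only on $H$, checking that $\varphi_0$ preserves suprema of increasing sequences whose supremum lies in $H$, and then invoking the extension result (\autoref{lma:extensions}) for dense subsemigroups to obtain a supremum-preserving $\varphi$ on all of $\Cu(A)$. That extension step is also where the ``routine diagonal bookkeeping'' you defer actually gets done; without it, neither the definition of $\varphi$ on all of $\Cu(A)$ nor its preservation of suprema is established. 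The rest of your argument (Rørdam's lemma as the bridge from $\precsim_{\mathrm{Cu}}$ to $\precsim_1$ after an $\varepsilon$-cut-down, additivity, the retraction identity $\phi\varphi=\mathrm{id}$, and the sup-preservation of $\phi$ via the representatives furnished by \autoref{W(R)_closed}) matches the paper's proof and is sound.
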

Secondly, we show that in a number of interesting examples outside the class of C*-algebras the monoids $\W(R)$ and $\SR(R)$, together with their order structure, can be identified:

\begin{thmintro}[\ref{prp:CuRegular}, \ref{prp:OrderSemi}, \ref{thm:computeSJ}]
	\label{thmD}
	Let $R$ be a unital ring, and let $P$ and $Q$ be countably generated projective $R$-modules. Then:
	\begin{enumerate}[{\rm (i)}]
		\item If $R$ is unit-regular, we have $[P]\leq [Q]$ in $\CP(R)$ precisely when $P$ is isomorphic to a submodule of $Q$. It follows that $\W(R)\cong\V(R)$ and $\SR(R)\cong\CP(R)\cong \RLambda(R)$. Thus $\SR(R)$ is a $\Cu$-semigroup.
		\item If $R$ semilocal, we have $[P]\leq [Q]$ in $\CP(R)$ precisely when $P$ is isomorphic to a pure submodule of $Q$.  In this case, as abelian monoids, we have $\SR(R)\cong\CP(R)\cong\V^*(R)$.
		\item	If $R$ is a nearly simple domain, then $\W(R)\cong\N\times\N$, and $(r,s)\leq (r',s')$ precisely when $r\leq r'$ and $r+s\leq r'+s'$. Moreover, $\SCu (J(R))\cong (\ol{\N} , 0)$.
	\end{enumerate}		
\end{thmintro}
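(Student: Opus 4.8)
The three items rest on different structural features, so I would prove them separately; in each case the common strategy is first to identify the module-theoretic meaning of the order on $\CP(R)$ and then to transport the answer to $\W(R)$, $\SR(R)$ and $\SCu(R)$ using Theorem~\ref{thmA} and the inductive-limit decomposition of countably generated projectives.

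For (i), I would start from the definition of $\precsim$, by which $P\precsim Q$ means that the inclusion of every finitely generated submodule $X\subseteq P$ factors as $X\to Q\to P$. Both implications in the claimed characterization use that over a (von Neumann) regular ring every finitely generated submodule of a projective module is a direct summand, hence itself finitely generated projective. For the embedding-implies-order direction, if $P\hookrightarrow Q$ then the image of a finitely generated $X\subseteq P$ is a summand of $Q$, and composing the inclusion with the corresponding projection produces the required factorization. For the converse I would write $P=\bigcup_n X_n$ as an increasing union of finitely generated projective submodules; each inclusion factors through $Q$, and I would patch these partial embeddings into a single injection $P\hookrightarrow Q$ using that unit-regular rings have cancellation of finitely generated projectives. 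With the characterization in hand, $\precsim_1$ restricted to finitely generated projectives becomes submodule inclusion, i.e.\ the algebraic order, so antisymmetrizing yields $\W(R)\cong\V(R)$; the isomorphism $\SR(R)\cong\CP(R)$ is Theorem~\ref{thmA}, and for $\SR(R)\cong\RLambda(R)$ I would check that over a regular ring every interval in $\W(R)$ is realized by an inductive-limit sequence of the prescribed form (the transition matrices satisfying $x_n=y_{n+1}x_{n+1}x_n$), whence $\SR(R)=\RLambda(R)$ is a $\Cu$-semigroup.

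For (ii), the governing tool is Prihoda's theorem, which embeds $\V^*(R)$ into $\V^*(R/J(R))\cong\ol{\N}^{\,r}$ for semilocal $R$ (see \cite{Prihoda2007}). I would first show that $[P]\le[Q]$ in $\CP(R)$ is equivalent to $P$ being isomorphic to a pure submodule of $Q$: purity provides exactly the liftings of finitely generated pieces of $P$ along the inclusion required by the factorization condition defining $\precsim$, and conversely the factorizations detect purity after reducing modulo $J(R)$. To obtain the monoid isomorphism $\V^*(R)\cong\CP(R)$ I would prove that the canonical surjection $\V^*(R)\to\CP(R)$ is injective: if $P$ and $Q$ embed purely in each other, their images in $\ol{\N}^{\,r}$ coincide, and Prihoda's rigidity forces $P\cong Q$. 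Combined with Theorem~\ref{thmA} this gives $\SR(R)\cong\CP(R)\cong\V^*(R)$ as abelian (though not ordered) monoids.

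Part (iii) is where I expect the main obstacle, since nearly simple domains are delicate objects (a domain, not a division ring, whose only two-sided ideals are $0$, $J(R)$ and $R$) whose projective modules must be handled essentially by hand, with none of the regular- or semilocal-ring shortcuts available. For $\W(R)\cong\N\times\N$ I would classify solvability of $x=ryt$ over matrix rings of $R$: the first coordinate should record a rank-type invariant from the domain structure and the second a correction detecting the $J(R)$-part, and I would verify that the comparison they induce is precisely $(r,s)\le(r',s')$ if and only if $r\le r'$ and $r+s\le r'+s'$. For $\SCu(J(R))$ I would compute the invariant of the non-unital ring $J(R)$ directly; using that $J(R)=J(R)^2$ is idempotent and weakly s-unital, I would show that no nonzero element of $\RLambda(J(R))$ arises from the inductive-limit construction, giving $\SR(J(R))=0$, while the interval monoid of $\W(J(R))$ collapses to $\ol{\N}$. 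The delicate bookkeeping of $\precsim_1$ in the absence of idempotents and units is the crux throughout (iii).
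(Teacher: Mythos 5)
Your treatment of parts (ii) and (iii) follows essentially the same route as the paper. For (ii) the paper (\autoref{prp:OrderSemi}) proves the chain $P\precsim Q\Rightarrow\dim(P)\le\dim(Q)\Rightarrow P$ embeds purely in $Q\Rightarrow P\precsim Q$, using the projection $\WLambda(R)\to\ol{\N}^r$ for the first implication and a lifting argument via \cite[Lemma 2.1]{Prihoda2007} for the second, and then deduces $\CP(R)\cong\V^*(R)$ from Prihoda's rigidity exactly as you do. For (iii) the paper's key technical input is \autoref{lem:matricesovernearlysimple} (diagonalization of square matrices over a uniserial ring by elementary transformations), after which $\psi(A)=(r,s)$ is shown to be well defined using Puninski's decomposition theorem \cite[Theorem 9.19]{Fac1998}; your ``rank plus $J$-correction'' invariant is the same map, and your argument that $\SR(J(R))=0$ (a sequence in $\preS(J)$ presents a projective $R$-module $P$ with $P=PJ(R)$, hence $P=0$) is precisely \autoref{rmk:WJ}.

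The genuine divergence is in part (i), and it is also where your sketch is thinnest. The paper does \emph{not} patch partial embeddings by hand: it invokes \cite[Theorem 1.4]{APP2000}, which gives a monoid isomorphism $\V^*(R)\cong\Lambda_\sigma(\V(R))$ for regular rings, and then observes that this isomorphism factors as $\V^*(R)\to\CP(R)=\SR(R)\to\WLambda(R)$, forcing both factors to be isomorphisms; the order characterization ($[P]\le[Q]$ iff $P$ embeds in $Q$) is \cite[Proposition 1.5]{APP2000}. The two steps you describe as routine --- ``patch these partial embeddings into a single injection $P\hookrightarrow Q$ using cancellation'' and ``check that every interval in $\W(R)$ is realized by an inductive-limit sequence of the prescribed form'' --- are exactly the content of that cited theorem, and neither is routine: the patching requires an inductive re-decomposition $Q\cong X_n\oplus Y_n\oplus Q_{n+1}$ using cancellation at every stage together with a limit argument to see that $\bigoplus_n Y_n$ genuinely sits inside $Q$, and the realization of arbitrary countably generated intervals is the surjectivity half of the APP2000 isomorphism. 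Your route is viable (it is in effect a re-proof of that theorem), but as written it hides the main difficulty of part (i) in two sentences; either carry out the induction in detail or cite the structure theorem as the paper does. One further small point: for $\W(R)\cong\V(R)$ you need both that every matrix is $\sim_1$-equivalent to an idempotent (regularity) and that the map $\V(R)\to\W(R)$ is injective, which uses stable finiteness of unit-regular rings; ``antisymmetrizing'' alone does not deliver injectivity.
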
	
The article is organized as follows. In \autoref{sec:WR} we review the definition of the Cuntz semigroup $\W_\mathrm{C}(A)$ for a C*-algebra $A$ and the category its completion naturally belongs to, and we define its algebraic counterpart $\W(R)$ together with the natural construction $\RLambda(R)$, which will conveniently serve as an ambient monoid later on. In \autoref{sec:Malc} we relate the semigroup $\W(R)$ with the Malcolmson semigroup introduced in \cite{HunLi21}, and show both semigroups may be identified for unital von Neumann regular rings. \autoref{sec:SRCPR} constitutes the heart of this paper, where we construct the Cuntz semigroup $\SR(R)$ for a ring $R$ and prove Theorem \ref{thmA}. This is technically demanding as we need to split the proof into the unital and non-unital case. In \autoref{sec:CatSCu} we introduce the category $\SCu$ and establish Theorem \ref{thmB}. In \autoref{sec:CompEl} we study compact elements in $\SR(R)$ and prove parts (i) and (ii) of Theorem \ref{thmD}. We revisit C*-algebras in \autoref{sec_Cs} to relate $\SR(A)$ and $\Cu(A)$, thus proving Theorem \ref{thmC}. \autoref{sec:nearly} is devoted to the analysis of the class of nearly simple domains and to prove part (iii) of Theorem \ref{thmD}.
\section*{Acknowledgements}
We wish to thank Guillem Quingles for a thourough reading of a preliminary version of this manuscript and for making suggestions that helped to improve the exposition.

%====================================================================

\section{The Cuntz semigroup of a \texorpdfstring{\Cs{}}{C*-algebra} and the semigroup \texorpdfstring{$\W(R)$}{W(R)}}
\label{sec:WR}

In this section we recall the definition of the Cuntz semigroup of a \Cs{} and its most natural adaptation to a purely algebraic framework. 

%====================================================
\begin{parag}[Diagonal sum in $M_\infty (R)$]\label{parag:FinMat}
	Given a ring $R$, we denote by $M_\infty (R)$ the ring of infinite matrices with only a finite number of nonzero entries. That is, given an element $x\in M_{\infty}(R)$ there exist $n,m\geq 1$ and a finite matrix $z\in M_{n\times m}(R)$ such that
	\[
	x= \begin{pmatrix}
		z & 0 & \cdots\\
		0 & 0 & \\
		\vdots & & \ddots
	\end{pmatrix}.
	\]
	
	We will call $z$ a \emph{finite representative} of $x$, and we will say that $x$ is the infinite matrix represented by $z$. We will tacitly identify $x$ and $z$ when no confusion can arise.
	
	Given two finite rectangular matrices $x\in M_{n_1\times m_1}(R)$ and $y\in M_{n_2\times m_2}(R)$, we will denote by $x\oplus y$ the infinite matrix
	\[
	\begin{pmatrix}
		x & 0 & 0 &  \cdots\\
		0 & y & 0 & \\
		0 & 0 & 0 & \\
		\vdots & & & \ddots
	\end{pmatrix}. 
	\]
	
	In other words, $x\oplus y$ is the infinite matrix represented by the rectangular matrix 
	\[
	\begin{pmatrix}
		x & 0\\
		0 & y 
	\end{pmatrix}\in M_{(n_1+n_2)\times (m_1+m_2)}(R). 
	\]
\end{parag}
%====================================================

\begin{parag}[Cuntz subequivalence and the Cuntz semigroup]\label{para:CuntzSgpDef}
	We shall denote by $\mathcal{K}$ the algebra of compact operators over an infinite-dimensional Hilbert space. Let $A$ be a \Cs{}. Given positive elements $a,b\in A$, we say that $a$ is \emph{Cuntz subequivalent to} $b$, in symbols $a\precsim_\mathrm{Cu} b$, provided that there is a sequence $(x_n)$ in $A$ such that $a=\lim_n x_nbx_n^*$. Equivalently, there are sequences $(x_n)$, $(y_n)$ in $A$ such that $a=\lim_n x_nby_n$ (see \cite{Cun78DimFct}).   We say that $a$ and $b$ are \emph{Cuntz equivalent}, in symbols $a\sim_\mathrm{Cu} b$, if both $a\precsim_\mathrm{Cu} b$ and $b\precsim_\mathrm{Cu} a$ occur. One can use the second equivalent definition of Cuntz subequivalence to extend $\sim_\mathrm{Cu}$ to arbitrary elements. That does not have any effect on the theory since, as it happens, $a^*a\sim_\mathrm{Cu}aa^*\in A_+$ for any $a\in A$.
	
	By extending this relation in the natural way to $M_\infty(A)_+$, one can define a partially ordered set 
	\[
	\W_\mathrm{C}(A)=M_\infty(A)_+/\!\!\sim,
	\]
	with order given by $[a]_\mathrm{Cu}\leq [b]_\mathrm{Cu}$ whenever $a\precsim_\mathrm{Cu} b$ (and where $[a]_\mathrm{Cu}$ denotes the equivalence class of $a$). It becomes a positively ordered semigroup by defining $[a]_\mathrm{Cu}+[b]_\mathrm{Cu}=[ a \oplus b ]_\mathrm{Cu}$. The semigroup $\W_\mathrm{C}(A)$, originally defined in \cite{Cun78DimFct}, is most currently referred to as the \emph{classical Cuntz semigroup}. The \emph{complete Cuntz semigroup} of a \Cs{} $A$ is $\Cu(A)=\W_\mathrm{C}(A\otimes\mathcal{K})$. (See \cite{APT2011}, and \cite{GarPer2022} for survey articles on the Cuntz semigroup.)
\end{parag}
%====================================================
Coward, Elliott, and Ivanescu introduced in \cite{Coward2008} the category $\Cu$, which captures continuity properties of the semigroup $\Cu(A)$.

\begin{parag}[The category $\Cu$ and abstract $\Cu$-semigroups] \label{para:AbsCuSgp}
	Given a positively ordered monoid $S$, we write $x\ll y$ (and say that $x$ is {\it compactly contained} in $y$, or that $x$ is \emph{way-below} $y$), if whenever $(y_n)$ is an increasing  sequence in $S$ for which the supremum $\sup _n y_n$ exists, then $y\le \sup _n y_n$ implies that there exists $k$ such that $x\le y_k$. (See \cite[I-1]{GieHof+03Domains}.)
	
	Using it, we consider the following axioms for $S$:
	
	\begin{enumerate}
		\item[(O1)] Every increasing sequence $(x_n)$ in $S$ has a supremum $\sup _n x_n\in S$.
		\item[(O2)] Every element $x\in S$ is the supremum of a sequence $(x_n)$ such that $x_n \ll x_{n+1}$ for all $n$. We say that $(x_n)$ is a {\it rapidly increasing sequence}.
		\item[(O3)] If $x',x,y',y\in S$ satisfy $x'\ll x$ and $y'\ll y$ then $x'+y'\ll x+y$.
		\item[(O4)] If $(x_n)$ and $(y_n)$ are increasing sequences in $S$, then $\sup _n (x_n+y_n)= \sup_n x_n + \sup_ny_n$.  
	\end{enumerate}
	
	An \emph{abstract Cuntz semigroup} (or just a \emph{$\Cu$-semigroup}) is a positively ordered monoid satisfying axioms (O1)-(O4). A $\Cu$-morphism between two $\Cu$-semigroups $S$ and $T$ is a positively ordered monoid morphism $f\colon S \to T$ that preserves compact containment and suprema of increasing sequences.
	The category $\Cu$ has as objects the $\Cu$-semigroups and as morphisms the $\Cu$-morphisms. It was shown in \cite{Coward2008} that the natural models of $\Cu$-semigroups are the complete Cuntz semigroups of \Cs{s}.
	
	Also, the natural models of $\Cu$-morphisms are the *-homomorphisms of \Cs{s}. More specifically, given \Cs{s} $A$ and $B$ and a *-homomorphism $\varphi\colon A\to B$, we may define $\Cu(\varphi)\colon\Cu(A)\to \Cu(B)$ by $\Cu(\varphi)([a])=[(\varphi\otimes\mathrm{id})(a)]$, for any $a\in (A\otimes\mathcal{K})_+$, which is a $\Cu$-morphism. In this way, the assignment $A\mapsto \Cu(A)$ determines a functor from the category of \Cs{s} to the category $\Cu$, which turns out to be continuous (see \cite{Coward2008} and also \cite{APT-Memoirs2018}).
	It was shown in \cite[Theorem 3.28]{APT-Memoirs2018} that $\Cu(A)$ is, suitably interpreted, a completion of $\W_\mathrm{C}(A)$.
	%\end{definition}
	
	Elements that will become relevant in the theory are the so-called \emph{compact} elements. By definition, an element $x$ in a $\Cu$-semigroup $S$ is termed compact provided $x\ll x$. The natural sources of compact elements in Cuntz semigroups of \Cs{s} are the classes of projections, i.e. self-adjoint idempotents. In significant cases, these are the only ones (see, e.g. \cite{BroCiu09}). A Cuntz semigroup $S$ is said to be \emph{algebraic} provided every element is the supremum of a sequence of compact elements (\cite[Definition 5.5.1]{APT-Memoirs2018}).
	
	A \emph{sub-$\Cu$-semigroup} of a $\Cu$-semigroup $T$ is a submonoid $S$ of $T$ such that the inclusion $\iota \colon S \to T$ is a $\Cu$-morphism; see, for example, \cite{APT-Memoirs2018}. For example, let $\N$ be the set of natural numbers with $0$. Then, $\ol{\N}:= \N \cup \{\infty\}$ is a $\Cu$-semigroup and a submonoid of the $\Cu$-semigroup $[0,\infty]$, but it is not a $\Cu$-subsemigroup of $[0,\infty]$, because, for instance, $2 \ll 2 $ in $\ol{\N}$ but $2$ is not compactly contained in itself in $[0,\infty]$. 
\end{parag}
%====================================================
We now introduce an algebraic analog of the classical Cuntz semigroup. For this, we first need to consider a class of rings suitable to our needs.  The relation $\precsim_1$ below was already considered in \cite{AGPS2010}.

\begin{parag}[$s$-unital rings]\label{para:suni}
	We recall that a ring $R$ is said to be \emph{$s$-unital} if, for every element $a\in R$, there is $b\in R$ such that $a=ba=ab$. Evidently this includes all unital rings, $\sigma$-unital rings, and rings with local units.
	
	We will say that a ring $R$ is  \emph{weakly $s$-unital} if for every $n\geq 1$ and every element $a\in M_n(R)$, there are $b,c \in M_n(R)$ such that $a=bac$.
	
	By \cite[Lemma 2.2]{AAlgCol2004}, given a finite family $a_1,\dots, a_n$ of elements of an $s$-unital ring $R$, there is $b\in R$ such that $ba_i=a_i = a_ib$ for $i=1,\dots, n$. From this, one can show that if $R$ is $s$-unital then so is the ring $M_{\infty}(R)$. It also follows that any $s$-unital ring is weakly $s$-unital.
\end{parag}
%====================================================

\begin{parag}[The semigroup $\W(R)$]
	\label{par:WR}
	Let $R$ be any ring. Given elements $a,b\in R$, we write $a\precsim_1 b$ if there exist elements $r,t\in R$ such that
	\[
	a=rbt.
	\]
	The relation $\precsim_1$ is clearly transitive by construction. Assume further that $R$ is weakly $s$-unital, and then $\precsim_1$ is also reflexive. We write $a\sim_1 b$ provided $a\precsim_1 b$ and $b\precsim_1 a$.
	
	If $e,f$ are idempotents in $R$, then an easy argument shows that $e\precsim_1 f$ if and only if $e\sim f'$ and $f'\leq f$ in the sense that $e=xy$ whilst $yx=f'$, for elements $x\in eRf', y\in f'Re$. That is, the relation $\precsim_1$ agrees with the usual Murray-von Neumann subequivalence $\precsim_\mathrm{MvN}$ for idempotents. Therefore, if $e\sim f$, then $e\sim_1 f$, but the converse does not necessarily hold -- it will if all idempotents are \emph{finite}, in the sense that they do not contain proper equivalent copies of themselves.
	
	In case $A$ is a \Cs{} and $p,q\in A$ are projections, then it is known that $p\precsim_\mathrm{Cu} q$ if and only if $p=vv^*$ and $v^*v\leq q$. In other words, Cuntz subequivalence, when restricted to projections, agrees with the usual Murray-von Neumann subequivalence. It follows from this that $p\precsim_\mathrm{Cu} q$ precisely when $p\precsim_1 q$. However, this will not hold for general positive elements, and thus one cannot expect that our algebraic construction below coincides with the \Cs{ic} one. Notice also that \Cs{s} are in general neither weakly $s$-unital nor $s$-unital.
	
	We now extend the relation $\precsim_1$ to $M_\infty(R)$ and define
	\[
	\W(R)=M_\infty(R)/\!\!\sim_1.
	\]
	Denote the class of $a\in M_\infty(R)$ by $[a]$. As we show below, this partially ordered set becomes an abelian semigroup by defining $[a]+[b]=[(\begin{smallmatrix} a & 0 \\ 0 & b \end{smallmatrix})]$, for any $a,b\in M_\infty(R)$.
\end{parag}
\begin{lemma}\label{lma:WRPoM}
	For any weakly $s$-unital ring $R$, the poset $\W (R)$, equipped with the addition defined above, is a positively ordered commutative monoid.
\end{lemma}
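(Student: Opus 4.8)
The plan is to verify the positively-ordered-monoid axioms one at a time, isolating the single place where weak $s$-unitality does any real work. First I would record that $\precsim_1$ is a preorder on $M_\infty(R)$: transitivity is immediate from $a=rbt$ and $b=r'ct'$ giving $a=(rr')c(t't)$, while reflexivity is exactly the weak $s$-unitality hypothesis applied to a square finite representative of $a$ (enlarge the representing block to some $M_N(R)$ and use $a=bac$). Consequently $\sim_1$ is an equivalence relation, and the relation $[a]\le[b]\Leftrightarrow a\precsim_1 b$ is well defined and antisymmetric by the very definition of $\sim_1$, so $(\W(R),\le)$ is a poset.

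Next come the formal monoid data. For well-definedness and monotonicity of $+$ I would use block-diagonal conjugation, which needs no units: if $a=ra's$ and $b=tb'u$, then
\[
a\oplus b=(r\oplus t)(a'\oplus b')(s\oplus u),
\]
so $a\precsim_1 a'$ and $b\precsim_1 b'$ imply $a\oplus b\precsim_1 a'\oplus b'$. This one implication shows at once that $+$ descends to $\W(R)$ and is compatible with $\le$. Associativity is free, since $(a\oplus b)\oplus c$ and $a\oplus(b\oplus c)$ are literally the same block-diagonal matrix. For the neutral element I would take $[0]$: the trivial relation $0=0\cdot a\cdot 0$ gives $0\precsim_1 a$ for all $a$, which simultaneously shows $[0]$ is the least element, hence positivity $0\le x$; and $0\oplus a\sim_1 a$ follows from the relocation device below.

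The crux, and the only genuinely nontrivial point, is commutativity $a\oplus b\sim_1 b\oplus a$, together with the closely related fact that the class of $a\oplus b$ is independent of the sizes of the finite representatives chosen for $a$ and $b$ (changing them merely relocates the blocks). Both amount to showing that a \emph{partial permutation} of rows and of columns preserves the $\sim_1$-class, and this is precisely where weak $s$-unitality is indispensable: in a nonunital ring there are no genuine permutation matrices, so I would manufacture substitutes from local units. Writing $a=eaf$ and $b=e'bf'$ with $e,f,e',f'$ square of the appropriate sizes (again from the hypothesis), the matrices
\[
P=\begin{pmatrix} 0 & e\\ e' & 0\end{pmatrix},\qquad Q=\begin{pmatrix} 0 & f'\\ f & 0\end{pmatrix}
\]
satisfy $P(b\oplus a)Q=a\oplus b$ by a direct block computation: the cross terms vanish and the surviving diagonal blocks collapse to $eaf=a$ and $e'bf'=b$. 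Hence $a\oplus b\precsim_1 b\oplus a$, and the symmetric choice gives the reverse subequivalence, so commutativity holds; the same device slides a single block past a zero block, disposing of $0\oplus a\sim_1 a$ and of the representative-independence of $+$.

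I expect this last step to be the main obstacle: everything else is formal, but one must check that the weak units can be taken as square matrices of the \emph{exact} sizes dictated by the block structure of $P$ and $Q$, and that the block arithmetic is consistent once $a$ and $b$ are genuinely rectangular. Once the identity $a\oplus b=P(b\oplus a)Q$ is established, all monoid and order axioms assemble routinely, and $\W(R)$ is a positively ordered commutative monoid.
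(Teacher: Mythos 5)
Your proof is correct and follows essentially the same route as the paper's: the block-diagonal factorization $a\oplus b=(r\oplus t)(a'\oplus b')(s\oplus u)$ for well-definedness and monotonicity of addition, and the antidiagonal matrices built from weak units (your identity $P(b\oplus a)Q=a\oplus b$ is precisely the paper's displayed computation for commutativity). You are in fact slightly more explicit than the paper about the only delicate point, namely that the weak units can be cut down to square blocks of the exact sizes required by the rectangular representatives.
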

%===============================================
\begin{proof}
	We first have to show that addition is well-defined.
	
	Note the following fact. Let $u\in M_{k\times l}(R)$ and $v\in M_{t\times s}(R)$ be finite matrices with coefficients in $R$, and let $x,y$ be the infinite matrices represented by $u$ and $v$ respectively. Then $x\precsim_1 y$ in $M_{\infty}(R)$ if and only if there are matrices $a\in M_{k\times t}(R)$ and $b\in M_{s\times l}(R)$ such that $u=avb$.
	
	For the addition, let $w,w'\in \W (R)$ and suppose that $u$ and $v$ are finite representatives of $w$, and that $u'$ and $v'$ are finite representatives of $w'$. Using the above observation we find finite matrices $a,b,a',b'$ of suitable sizes such that
	\[
	u = avb \text{ and }  u'=a'v'b'.
	\]
	We then have that $u\oplus u' = (a\oplus a')(v\oplus v')(b\oplus b')$. This shows that $(u\oplus u')\precsim_1 (v\oplus v')$, and similarly we have that
	$(v\oplus v')\precsim_1 (u\oplus u')$, and thus $[(u\oplus u')] = [ (v\oplus v')]$.
	
	The same argument shows that addition is compatible with the order in $\W (R)$. Further, it is clear that the class $[0]$ is the zero element and that addition is associative.
	
	To see that it is also commutative, let $w,w'\in \W (R)$, and let $u,u'$ be finite representatives of $w, w'$, respectively. Since $R$ is weakly $s$-unital, we may choose finite matrices $v,z$, $v',z'$ of suitable sizes such that $vuz=u$ and $v'u'z'=u'$.
	
	Then, we have
	\[
	\begin{pmatrix}
		u' & 0\\
		0 & u
	\end{pmatrix}=
	\begin{pmatrix}
		0 & v'\\
		v & 0
	\end{pmatrix}
	\begin{pmatrix}
		u & 0\\
		0 & u'
	\end{pmatrix}
	\begin{pmatrix}
		0 & z\\
		z' & 0
	\end{pmatrix}\precsim_1
	\begin{pmatrix}
		u & 0\\
		0 & u'
	\end{pmatrix}.
	\]
	Hence $u'\oplus u \precsim_1 u\oplus u'$.
	Thus, one gets
	\[
	w'+w =
	[u'\oplus u]\leq [u\oplus u']= w+w',
	\]
	and by symmetry $w+w'\leq w'+w$, showing that $w'+w=w+w'$, as desired.
\end{proof}
%===============================================
\begin{parag}[The semigroup $\V(R)$]
	We shall denote as customary by $\V(R)$ the semigroup of Murray-von Neumann equivalence classes of idempotents in $M_\infty(R)$, and we denote the class of an idempotent $e\in M_\infty(R)$ by $[e]_\mathrm{MvN}$. Our observations above mean that there is a an order-embedding $\iota:\V(R)\to \W(R)$, given by $[e]_\mathrm{MvN}\mapsto [e]$. This map is injective if $R$ is \emph{stably finite}, in the sense that $x+y=x$ in $\V(R)$ precisely when $y=0$.
	
	In particular, the next result shows how the different orders behave via $\iota$. Indeed, it is shown that every element of $\iota (V(R))$ can be complemented in $\W (R)$. It is worth noticing the converse does not always hold; see \autoref{rmk:ComplNonComp}.
	
	\begin{lemma}\label{lem:alg-order} 
		Let $R$ be a weakly $s$-unital ring, and let $x\in \iota (V(R))$ and $y\in \W (R)$. If $x\le y$, then there exists $z\in \W (R)$ such that $x+z=y$.
	\end{lemma}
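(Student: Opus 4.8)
The plan is to exhibit an explicit complement, namely $z=[a-atra]$, by transforming the matrix $a$ into the block-diagonal form $(a-atra)\oplus e$ via a short sequence of elementary block operations, and then showing that each such operation leaves the class in $\W(R)$ unchanged. First I would record a normalized factorization of $e$. Since $x=[e]$ for an idempotent $e\in M_\infty(R)$ and $x\le y=[a]$, by definition $e\precsim_1 a$, so $e=r_0at_0$ for some $r_0,t_0\in M_\infty(R)$. Replacing $r_0,t_0$ by $r:=er_0$ and $t:=t_0e$ and using $e^2=e$ yields
\[
rat=e,\qquad er=r,\qquad te=t,
\]
which are exactly the identities needed below; in particular they give $ate=at$ and $era=ra$.

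Next, writing $n,m$ for the sizes of finite representatives of $e$ and $a$ and working inside $M_{m+n}(R)\subseteq M_\infty(R)$ with $a$ and $e$ in complementary blocks (so that $a$ itself equals $a\oplus 0_n$), I would carry out the block row/column operations
\[
\begin{pmatrix} I & -at \\ 0 & I \end{pmatrix}\begin{pmatrix} I & 0 \\ r & I \end{pmatrix}\begin{pmatrix} a & 0 \\ 0 & 0 \end{pmatrix}\begin{pmatrix} I & t \\ 0 & I \end{pmatrix}\begin{pmatrix} I & 0 \\ -ra & I \end{pmatrix}=\begin{pmatrix} a-atra & 0 \\ 0 & e \end{pmatrix}.
\]
This is a direct computation: the two innermost operations use $rat=e$ to install $e$ in the lower-right corner, and the outer two use $ate=at$ and $era=ra$ to clear the off-diagonal blocks. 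Denoting by $P_0$ and $Q_0$ the products of elementary matrices on the left and on the right, this reads, as a formal matrix identity, $(a-atra)\oplus e=P_0\,a\,Q_0$.

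The one genuine subtlety, and the main obstacle, is that $P_0$ and $Q_0$ do not lie in $M_\infty(R)$, since they carry identity blocks; hence the displayed identity is not yet a relation $\precsim_1$. This is precisely where weak $s$-unitality enters. Note $P_0=I+N$ and $Q_0=I+N'$ with $N,N'\in M_\infty(R)$, and the same holds for $P_0^{-1},Q_0^{-1}$. By weak $s$-unitality choose $u,v\in M_\infty(R)$ with $a=uav$; then
\[
(a-atra)\oplus e=(P_0u)\,a\,(vQ_0),\qquad P_0u=u+Nu\in M_\infty(R),\quad vQ_0\in M_\infty(R),
\]
so $(a-atra)\oplus e\precsim_1 a$. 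For the reverse inequality, apply the analogous absorption to $a=P_0^{-1}\big[(a-atra)\oplus e\big]Q_0^{-1}$: choosing a weak $s$-unital pair $u',v'$ for the middle term gives multipliers $P_0^{-1}u'$ and $v'Q_0^{-1}$ in $M_\infty(R)$, whence $a\precsim_1(a-atra)\oplus e$. Therefore $a\sim_1(a-atra)\oplus e$.

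Finally, using the additivity and commutativity of $\W(R)$ established in \autoref{lma:WRPoM},
\[
y=[a]=\big[(a-atra)\oplus e\big]=[e]+[a-atra]=x+[a-atra],
\]
so $z:=[a-atra]\in\W(R)$ is the desired complement. I expect the only delicate points to be the bookkeeping of block sizes in the normalization step and the careful, repeated use of weak $s$-unitality to convert the formal elementary-matrix identity into honest $\precsim_1$ relations in both directions.
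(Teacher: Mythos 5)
Your proof is correct and follows essentially the same route as the paper's: both produce the same complement $z=[a-atra]$ (the paper's $w=v-fv$ with $f=vsr$, which is the same element) and establish $a\sim_1 (a-atra)\oplus e$ by exhibiting explicit two-sided multipliers, invoking weak $s$-unitality once for each direction of $\precsim_1$. The only difference is presentational: the paper writes the multipliers directly as rectangular block matrices, whereas you derive them as products of elementary matrices and then absorb the identity blocks, which is a valid way to handle the non-unital setting.
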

	\begin{proof} 
		Let $e$ be an idempotent in $M_\infty (R)$ such that $x=[e]$, and let $v\in M_\infty (R)$ satisfy $y=[v]$. Using that $e\precsim_1 v$, we can find elements $r,s$ such that $e=rvs$. Since $e$ is idempotent, we may also assume that $r=er$ and $s=se$. Thus, the element $f:=vsr= vser$ is an idempotent in $vR$ satisfying $[f]=[e]=x$.
		
		Now set $w:=v-fv\in M_\infty (R)$. Since $R$ is weakly $s$-unital, there exist $a,b\in M_\infty (R)$ such that
		$awb=w$. Thus, we have
		\[
		v= 
		\begin{pmatrix} f & a\end{pmatrix} \begin{pmatrix} f & 0\\ 0 & v-fv\end{pmatrix}
		\begin{pmatrix} v \\ b\end{pmatrix}
		\]
		and, consequently, $y=[v]\le [f] + [w] =x+[w]$.
		
		Using once again that $R$ is weakly $s$-unital, let $c,d\in M_\infty (R)$ satisfy $v=cvd$. One gets
		\[
		\begin{pmatrix} f & 0  \\ 0 & v-fv\end{pmatrix}
		=
		\begin{pmatrix} fc \\ c-fc\end{pmatrix} v \begin{pmatrix} dsr & d-dsrv \end{pmatrix}.
		\]
		
		This shows that $[f]+[w]\le [v]=y$. Setting $z:=[w]$, we obtain 
		\[
		x+z=[f]+[w]=y,
		\]
		as desired.
	\end{proof}
\end{parag}
%=========================================================
In a more concrete settting, recall that an element $a$ in a ring $R$ is said to be a \emph{von Neumann regular element} provided there is $x\in R$ such that $a=axa$. Upon replacing $x$ by $x'=xax$, we may also assume that $x=xax$. The ring $R$ is said to be a von Neumann regular ring if every element is von Neumann regular. (See \cite{vnrr}.)

\begin{lemma}
	\label{lma:Weasycase}
	Let $R$ be a stably finite von Neumann regular ring. Then, the natural map $\V(R)\to\W(R)$ is an order-isomorphism.
\end{lemma}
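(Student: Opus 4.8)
The plan is to show that the order-embedding $\iota\colon \V(R)\to\W(R)$ from the previous paragraph is surjective, since injectivity (from stable finiteness) and order-preservation are already in hand; once $\iota$ is a surjective order-embedding it is automatically an order-isomorphism. So the whole task reduces to proving that every class $[a]\in\W(R)$ with $a\in M_\infty(R)$ equals $[e]$ for some idempotent $e$.

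First I would reduce to a statement about a single element of $R$ rather than $M_\infty(R)$: since $R$ is von Neumann regular, so is $M_n(R)$ for every $n$, and hence $M_\infty(R)$ is a von Neumann regular ring as well. Thus it suffices to take any $a\in M_\infty(R)$ and produce an idempotent $e$ with $a\sim_1 e$. By von Neumann regularity choose $x$ with $a=axa$ and, after replacing $x$ by $xax$, also $x=xax$. Then set $e:=ax$ (or $e:=xa$); a direct check gives $e^2=axax=ax=e$, so $e$ is idempotent. The key point is that $e\sim_1 a$: indeed $e=ax=a\cdot x$ witnesses $e\precsim_1 a$, while $a=axa=e\cdot a$ together with $a = a x a = a(xax)a$ rewritten as $a = (ax)a = e a$ gives $a\precsim_1 e$ by writing $a = e a = (e)(a)(?)$. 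I would make this last factorization clean by noting $a = axa = (ax)(a) $, and since we need the form $a = r e t$ I would use $a = a\cdot(xa) = (ax)\cdot a$ and observe $ea = axa = a$, $ae = axax\cdot a$—so concretely $a = ea = e a e'$ for a suitable witness, establishing $a\precsim_1 e$.

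The cleanest way to organize this is to recall the standard ring-theoretic fact, valid in any von Neumann regular ring, that every element is equivalent in the relevant sense to an idempotent; here the pertinent statement is that for the regular element $a$ with associated $x$ satisfying $a=axa$ and $x=xax$, both $ax$ and $xa$ are idempotents and $a\sim_1 ax\sim_1 xa$. Concretely, $ax\precsim_1 a$ and $a=axa\precsim_1 ax$ (taking $a = axa = (ax)\cdot a$, where the right factor $a$ and left factor $ax$ exhibit $a = (ax) a \cdot 1$, but more precisely $a\precsim_1 ax$ via $a = ax\cdot a$ after noting $ax\cdot a = a$). I would write these two subequivalences explicitly and conclude $[a]=[ax]$ in $\W(R)$, with $ax$ idempotent.

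The main obstacle—though a mild one—is bookkeeping the factorizations so that they genuinely have the shape $\bullet = r\,(\bullet)\,t$ required by $\precsim_1$, rather than a one-sided product; the weakly $s$-unital hypothesis (which holds here since regular rings are weakly $s$-unital, indeed $a=axa$ directly provides suitable left and right multipliers) guarantees the missing multipliers can always be inserted. Once surjectivity of $\iota$ onto $\W(R)$ is established, I combine it with the injectivity coming from stable finiteness and with the fact (already recorded before \autoref{lem:alg-order}) that $\iota$ is an order-embedding to conclude that $\iota$ is an order-isomorphism, which is exactly the assertion of the lemma.
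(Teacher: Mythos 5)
Your proposal is correct and follows essentially the same route as the paper: reduce to showing surjectivity of the order-embedding $\iota$ (injectivity coming from stable finiteness), use that $M_\infty(R)$ is again von Neumann regular, and check $a\sim_1 ax$ with $ax$ idempotent. The three-factor witnesses you were fumbling for are $ax=(ax)\,a\,(x)$ and $a=(ax)\,(ax)\,(a)$, so the bookkeeping does close up.
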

\begin{proof}
	Let $a\in R$, and write $a=axa$ with $x=xax$. It is then an easy exercise to verify that $a\sim_1 ax=:e$, which is an idempotent. Since matrices over a von Neumann regular ring are also von Neumann regular, this shows that the map $\V(R)\to\W(R)$ is surjective.
\end{proof}

%=========================================================
The relation between $\precsim_1$ and $\precsim_\mathrm{Cu}$ for general positive elements in a C*-algebra is examined with some more detail in the lemma below.

\begin{lemma}
	\label{Cucls_Rmk}
	Let $A$ be a \Cs, and let $a,b\in A$. If $a\precsim_1 b$, then $a^*a\precsim_\mathrm{Cu}b^*b$. Therefore, there is a positively ordered monoid morphism $\iota_\mathrm{C}\colon \W(A)\to \W_\mathrm{C}(A)$, given by $\iota_\mathrm{C}([a])=[a^*a]_\mathrm{Cu}$.
\end{lemma}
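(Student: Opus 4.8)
The plan is to first establish the stated implication $a\precsim_1 b\Rightarrow a^*a\precsim_\mathrm{Cu}b^*b$ through a short chain of standard Cuntz manipulations, and then to read off the existence and the properties of $\iota_\mathrm{C}$ as formal consequences.

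For the main implication, suppose $a=rbt$ with $r,t\in A$. I would work in the unitization $\widetilde{A}$ (so that $r^*r\le\|r\|^2\cdot 1$ and $tt^*\le\|t\|^2\cdot 1$ make sense), keeping in mind that all the elements whose Cuntz classes I compare lie in $A$. I will freely use three standard facts about Cuntz subequivalence in a \Cs{}: (a) $x^*x\sim_\mathrm{Cu}xx^*$ for every $x$; (b) if $0\le x\le y$ then $x\precsim_\mathrm{Cu}y$; and (c) $\lambda x\sim_\mathrm{Cu}x$ for every positive scalar $\lambda$ and every positive $x$; together with the elementary operator inequality $x\le y\Rightarrow c^*xc\le c^*yc$. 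Writing $a^*a=(bt)^*(r^*r)(bt)$ and using $r^*r\le\|r\|^2\cdot 1$ gives
\[
a^*a=(bt)^*(r^*r)(bt)\le \|r\|^2\,(bt)^*(bt),
\]
so by (b) and (c) we get $a^*a\precsim_\mathrm{Cu}(bt)^*(bt)$. By (a), $(bt)^*(bt)\sim_\mathrm{Cu}(bt)(bt)^*=b(tt^*)b^*$, and using $tt^*\le\|t\|^2\cdot 1$ together with the conjugation inequality yields $b(tt^*)b^*\le\|t\|^2\,bb^*$, whence $(bt)(bt)^*\precsim_\mathrm{Cu}bb^*$ again by (b) and (c). Finally $bb^*\sim_\mathrm{Cu}b^*b$ by (a). Chaining these,
\[
a^*a\precsim_\mathrm{Cu}(bt)^*(bt)\sim_\mathrm{Cu}(bt)(bt)^*\precsim_\mathrm{Cu}bb^*\sim_\mathrm{Cu}b^*b,
\]
which is the desired conclusion.

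With the implication in hand, the morphism statement is formal. The same argument applied verbatim in $M_\infty(A)$ (matrices over a \Cs{} are again a \Cs{}) shows $a\precsim_1 b\Rightarrow a^*a\precsim_\mathrm{Cu}b^*b$ for $a,b\in M_\infty(A)$; applying it in both directions gives that $a\sim_1 b$ implies $a^*a\sim_\mathrm{Cu}b^*b$, so $\iota_\mathrm{C}([a])=[a^*a]_\mathrm{Cu}$ is well defined and order preserving, and $\iota_\mathrm{C}([0])=[0]$. Additivity follows from the block computation $(a\oplus b)^*(a\oplus b)=a^*a\oplus b^*b$, so that $\iota_\mathrm{C}([a]+[b])=[a^*a\oplus b^*b]_\mathrm{Cu}=[a^*a]_\mathrm{Cu}+[b^*b]_\mathrm{Cu}=\iota_\mathrm{C}([a])+\iota_\mathrm{C}([b])$. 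Hence $\iota_\mathrm{C}$ is a positively ordered monoid morphism.

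The computations here are all routine; the only point requiring a little care is the handling of the middle positive factors $r^*r$ and $tt^*$, which is why I replace them by their norm bounds and push them past $b$ using the conjugation inequality rather than trying to absorb them directly. A secondary point to keep an eye on is that, since a \Cs{} need not be weakly $s$-unital, one should make sure the relation $\sim_1$ and the object $\W(A)$ are being used exactly as defined; the argument above only needs that $\iota_\mathrm{C}$ respects $\precsim_1$ and $\sim_1$, which it does.
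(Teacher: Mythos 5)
Your proof is correct and follows essentially the same route as the paper's: write $a=rbt$, express $a^*a$ as a conjugate of a positive element, and chain the standard facts $x^*x\sim_\mathrm{Cu}xx^*$, $0\le x\le y\Rightarrow x\precsim_\mathrm{Cu}y$, and invariance under positive scalars to reach $b^*b$. The paper compresses the first link into the single statement $r^*(b^*s^*sb)r\precsim_\mathrm{Cu}b^*s^*sb$ and leaves the formal consequences for $\iota_\mathrm{C}$ implicit, but the underlying argument is the same.
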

\begin{proof}
	Suppose that $a\precsim_1 b$, and write $a=sbr$ for some $s,r\in A$. Then
	\begin{equation*}
		a^{*}a=r^{*}b^{*}s^{*}sbr\precsim_\mathrm{Cu} b^{*}s^{*}sb \le  \| s \|^2 b^{*}b\precsim_\mathrm{Cu} b^{*}b.
	\end{equation*}
	Notice that, in particular, since $x\sim_\mathrm{Cu} x^{2}$ for any $x\in A_{+}$, we have $a\precsim_\mathrm{Cu} b$ whenever $a,b\in A_{+}$ and $a\precsim_{1}b$.
\end{proof}

%==========================================================
\begin{parag}[The semigroup $\RLambda(R)$]
	\label{pgr:lambdaseq}
	
	We now proceed to construct an object in the category $\Cu$ for any weakly $s$-unital ring $R$. In the C$^*$-setting this can be done by simply considering $\W_C(A\otimes \mathcal K)$, but there is no algebraic analogue of the compact operators $\mathcal K$. Another approach is through a completion of $\W(R)$ by a so called \emph{auxiliary relation} in $\W(R)$; see \cite[Definition I-1.11]{GieHof+03Domains},  \cite[Definition 2.2.4]{APT-Memoirs2018} and also  \autoref{rmk:auxiliary_Lamdaseq} below. But again, there is no clear algebraic analogue of such an auxiliary relation for our needs. 
	
	The approach below is partly inspired by the description of $\Cu(A)$ in \cite{Coward2008} using 
	certain increasing sequences. We will first make a general construction that works for a general ring, and then specialize to the weakly $s$-unital setting.
	
	%==========================================================
	
	Let $R$ be a ring. Set
	\[
	T(R)=\{(a_n)_n\mid a_n\in M_\infty(R) \text{ and } a_n\precsim_1 a_{n+1}\text{ for all }n \}.
	\]
	Given $(a_n), (b_n)\in T(R)$, write $(a_n)\precsim (b_n)$ if for any $n$, there exists $m$ with $a_n\precsim_1 b_m$. We also write $(a_n)\sim (b_n)$ provided $(a_n)\precsim (b_n)$ and $(b_n)\precsim (a_n)$. Notice that, since for $(a_n)\in T(R)$ we have $a_n\precsim_1a_{n+1}$, we see that $(a_n)\precsim (a_n)$ and thus the relation $\precsim$ is reflexive even if $R$ is not unital. It is also clearly transitive.
	Let
	\[\RLambda(R)=T(R)/{\sim},
	\]
	which is a partially ordered set with the order induced by $\precsim$. We denote by $[(a_n)]$ the $\sim$-equivalence class of a sequence $(a_n)$ in $T(R)$. We now equip $\RLambda(R)$ with a semigroup structure, and to this end we need to be careful with the choices of representatives.
	
	Thus, in analogy to the terminology introduced in \autoref{parag:FinMat}, given an element $w\in \RLambda(R)$, a {\it finite matricial representative} of $w$ is any sequence $(u_n)$ such that $u_n\in M_{k_{n+1}\times k_n}(R)$, where $(k_n)$ is a sequence of positive integers, for which there exist matrices $v_{n+1}\in M_{k_{n+1}\times k_{n+2}}(R)$ and $z_{n+1}\in M_{k_{n+1}\times k_n}(R)$ such that $u_n= v_{n+1}u_{n+1}z_{n+1}$ for all $n$, and with $w=[(x_n)]$, where $x_n$ is the infinite matrix represented by $u_n$ for each $n\in \N$.
	
	For $w,w'\in \RLambda (R)$, let $(u_n)$ and $(u'_n)$ be finite matricial representatives of $w$ and $w'$ respectively. The sum $w+w'$ is then defined as
	\[
	w+w'= [(u_n\oplus u_n')]\in \RLambda (R).
	\]
\end{parag}
%=========================================================

\begin{lemma}\label{lma:LambdaseqPoM}
	For any ring $R$, the poset $\RLambda (R)$, equipped with the addition defined above, is a positively ordered commutative monoid.
\end{lemma}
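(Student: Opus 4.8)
The plan is to mirror the structure of the proof of \autoref{lma:WRPoM}, but with the extra bookkeeping forced by working with sequences rather than single elements. The key preliminary observation is the analogue, at the level of sequences, of the factorization fact used there: if $(u_n)$ and $(u_n')$ are finite matricial representatives of $w,w'\in\RLambda(R)$, then $w\le w'$ precisely when for every $n$ there exist an index $m$ and matrices $a,b$ of suitable sizes with $u_n = a\, u_m'\, b$. This is just the unfolding of $(x_n)\precsim(x_n')$ through the finite-representative dictionary of \autoref{parag:FinMat}, and it is what lets every subsequent verification be carried out entirely with finite matrices and direct sums.

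First I would check that the sum is well-defined. Given two finite matricial representatives $(u_n)$ and $(v_n)$ of the same element $w$, the relation $w=[(x_n)]=[(y_n)]$ unwinds to: for each $n$ there is $m(n)$ and finite matrices $a_n,b_n$ with $u_n = a_n v_{m(n)} b_n$, and symmetrically. Taking direct sums with the corresponding data for a representative of $w'$, exactly as in \autoref{lma:WRPoM} via the identity $(u_n\oplus u_n')=(a_n\oplus a_n')(v_{m(n)}\oplus v_{m(n)}')(b_n\oplus b_n')$, shows $(u_n\oplus u_n')\sim(v_n\oplus v_n')$ as elements of $T(R)$, so the class $[(u_n\oplus u_n')]$ is independent of the choices. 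One must also confirm that $(u_n\oplus u_n')$ is genuinely a finite matricial representative, i.e.\ that it satisfies the required factorization $u_n\oplus u_n' = (v_{n+1}\oplus v_{n+1}')(u_{n+1}\oplus u_{n+1}')(z_{n+1}\oplus z_{n+1}')$; this is immediate by taking block-diagonal sums of the witnessing matrices for each factor. The same direct-sum manipulation shows addition is compatible with the order and that $[(0)]$ is a neutral element, while associativity is clear since $\oplus$ is associative up to the obvious reindexing.

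For commutativity I would reproduce the swap argument of \autoref{lma:WRPoM}, now applied termwise. Here is the one place where the hypotheses differ: the monoid $\W(R)$ required \emph{weak $s$-unitality} to produce the matrices $v,z$ with $vuz=u$ needed for the off-diagonal permutation trick. In the present setting no such hypothesis is available, but it is not needed, because the defining property of a finite matricial representative already supplies the replacement: for each $n$ we have $u_n = v_{n+1} u_{n+1} z_{n+1}$ built into the sequence, and likewise $u_n' = v_{n+1}' u_{n+1}' z_{n+1}'$. Using these factorizations in place of the weak-$s$-unitality factorizations, the block matrix computation
\[
\begin{pmatrix} u_n' & 0\\ 0 & u_n\end{pmatrix}
=
\begin{pmatrix} 0 & v_{n+1}'\\ v_{n+1} & 0\end{pmatrix}
\begin{pmatrix} u_{n+1} & 0\\ 0 & u_{n+1}'\end{pmatrix}
\begin{pmatrix} 0 & z_{n+1}\\ z_{n+1}' & 0\end{pmatrix}
\]
exhibits $u_n'\oplus u_n \precsim_1 u_{n+1}\oplus u_{n+1}'$, whence $(u_n'\oplus u_n)\precsim(u_n\oplus u_n')$; symmetry then gives $w'+w=w+w'$.

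The main obstacle I anticipate is precisely this last point: verifying that one can dispense with the weak-$s$-unitality assumption that was essential in \autoref{lma:WRPoM}. The resolution hinges on recognizing that the ``self-factorization'' $u_n=v_{n+1}u_{n+1}z_{n+1}$ that is baked into the notion of a finite matricial representative plays exactly the role that weak $s$-unitality played before, and that shifting the index by one (from level $n$ to level $n+1$) is harmless because $\RLambda(R)$ identifies a sequence with all of its tails. Once this substitution is made, every remaining step is a routine direct-sum computation of the kind already carried out in \autoref{lma:WRPoM}.
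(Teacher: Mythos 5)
Your proposal is correct and follows essentially the same route as the paper, which itself only sketches the argument by reference to \autoref{lma:WRPoM}. In fact you make explicit the one point the paper leaves implicit: the commutativity computation cannot invoke weak $s$-unitality here, and your substitution of the built-in factorization $u_n=v_{n+1}u_{n+1}z_{n+1}$ (with the harmless index shift absorbed by the definition of $\precsim$ on sequences) is exactly the right fix.
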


\begin{proof}
	The argument is similar to \autoref{lma:WRPoM}. We sketch the main steps in the proof.
	
	If $w,w'\in \RLambda (R)$, let $(u_n),(v_n)$ be two finite matricial representatives of $w$, and let $(u'_n)$ and $(v_n')$ be finite matricial representatives of $w'$. Given $n\in \N$, we see, using the first observation in the proof of \autoref{lma:WRPoM}, that there is $m\in \N$ and finite matrices $a,b,a',b'$ of suitable sizes such that
	\[
	u_n = av_mb, \qquad u_n'=a'v_m'b'.
	\]
	Then $u_n\oplus u_n' = (a\oplus a')(v_m\oplus v_m')(b\oplus b')$, and thus $(u_n\oplus u_n')\precsim (v_n\oplus v_n')$. Likewise, $(v_n\oplus v_n')\precsim (u_n\oplus u_n')$, whence $[(u_n\oplus u_n')] = [ (v_n\oplus v_n')]$.
	
	The rest of the argument follows the lines of \autoref{lma:WRPoM}.
\end{proof}
%================================================================
\begin{proposition}
	\label{rmk:ifneedbe}
	Let $R$ be any ring. Then every increasing sequence in $\RLambda(R)$ has a supremum. If, further, $R$ is weakly $s$-unital, then $\RLambda(R)$ is a $\Cu$-semigroup.
\end{proposition}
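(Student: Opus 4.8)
The plan is to verify the two assertions separately, isolating a concrete description of suprema as the technical core from which the continuity axioms follow.

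For the first assertion (which is axiom (O1) and holds for an arbitrary ring), let $(w^{(k)})_k$ be an increasing sequence in $\RLambda(R)$ and fix representatives $w^{(k)}=[(a^{(k)}_n)_n]$. The relation $w^{(k)}\le w^{(k+1)}$ unwinds to: for every $n$ there is $m$ with $a^{(k)}_n\precsim_1 a^{(k+1)}_m$. First I would show that the doubly-indexed family $\{a^{(k)}_n\}_{k,n}\subseteq M_\infty(R)$ is \emph{upward directed} for $\precsim_1$: given two entries, one passes from the lower row to the higher one using the row-cofinality just recorded (transitively) and then moves far enough along that higher row, using that each sequence is $\precsim_1$-increasing by definition of $T(R)$; note this uses no reflexivity of $\precsim_1$ on individual elements. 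A countable upward-directed set under a transitive relation admits a cofinal increasing sequence, so enumerating the family as $(e_j)_j$ and setting $c_1=e_1$ and $c_{j+1}$ a common upper bound of $c_j$ and $e_{j+1}$ inside the family yields $(c_j)\in T(R)$ with each $c_j$ a family entry and every family entry $\precsim_1$-below some $c_j$. Then $w:=[(c_j)]$ is an upper bound (each $a^{(k)}_n\precsim_1$ some $c_j$), and it is the least one: if $[(d_l)]$ bounds all $w^{(k)}$, then each family entry, hence each $c_j$, is $\precsim_1$-below some $d_l$, so $w\le[(d_l)]$. Thus $\sup_k w^{(k)}=[(c_j)]$, realized concretely as a cofinal increasing sequence drawn from the family.

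Assume now $R$ is weakly $s$-unital, so that $\precsim_1$ is reflexive and $\RLambda(R)$ is a positively ordered commutative monoid by \autoref{lma:LambdaseqPoM}; it remains to check (O2)--(O4). The key lemma I would prove is a description of compact containment: writing $a^\infty=(a,a,a,\dots)$, which lies in $T(R)$ precisely because reflexivity now holds, one has $[(a_n)]=\sup_n[a_n^\infty]$, and moreover $[(a_n)]\ll[(b_n)]$ if and only if there is $m$ with $a_k\precsim_1 b_m$ for all $k$. The forward implication uses $[(b_n)]=\sup_m[b_m^\infty]$ together with the description of suprema above; the reverse uses that $[a_m^\infty]\ll[a_{m+1}^\infty]$, which is immediate from $a_m\precsim_1 a_{m+1}$ and the cofinal form of suprema. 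In particular $[a_n^\infty]\ll[a_{n+1}^\infty]$ and $[(a_n)]=\sup_n[a_n^\infty]$, which is (O2).

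Axioms (O3) and (O4) then reduce to bookkeeping with this characterization and the additive formula $[(a_n)]+[(b_n)]=[(a_n\oplus b_n)]$. For (O3), if $[(a'_n)]\ll[(a_n)]$ and $[(b'_n)]\ll[(b_n)]$, pick (after enlarging to a common index) $m$ with $a'_k\precsim_1 a_m$ and $b'_k\precsim_1 b_m$ for all $k$; then $a'_k\oplus b'_k\precsim_1 a_m\oplus b_m$ for all $k$ via the block-diagonal argument in the proof of \autoref{lma:WRPoM}, so the characterization gives $[(a'_n)]+[(b'_n)]\ll[(a_n)]+[(b_n)]$. For (O4), writing both $\sup_n x_n+\sup_n y_n$ and $\sup_n(x_n+y_n)$ through cofinal increasing sequences drawn from the respective families as in the first paragraph, each entry of one side is $\precsim_1$-dominated by an entry of the other (passing up rows and along them), yielding the two inequalities and hence equality. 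The main obstacle I anticipate is entirely in the first paragraph: establishing upward directedness of the family and extracting a single cofinal increasing sequence is where the genuine content lies, and obtaining this \emph{concrete} cofinal description of suprema is essential, since the verifications of (O2)--(O4) all rely on it rather than on an abstract least-upper-bound property.
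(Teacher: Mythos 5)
Your proposal is correct and follows essentially the same route as the paper: a concrete cofinal increasing sequence drawn from the doubly-indexed family realizes the supremum (the paper does this by an explicit diagonal/reindexing argument, which your directedness-plus-enumeration packaging reproduces), and then, under weak $s$-unitality, the (eventually) constant sequences yield the characterization $[(a_n)]\ll[(b_n)]$ iff $a_k\precsim_1 b_m$ for all $k$ and some fixed $m$, from which (O2)--(O4) follow exactly as you describe.
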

\begin{proof}
	This is fundamentally contained in the arguments of \cite[Proposition 3.1.6]{APT-Memoirs2018}. We offer some details for the convenience of the reader.
	
	Let $([x_k])_k$ in $\RLambda(R)$ be an increasing sequence. We thus have that $x_k\precsim x_{k+1}$ for all $k$. Write $x_k=(x_n^{(k)})_n$, with $x_n^{(k)}\precsim_1x_{n+1}^{(k)}$ for all $n$ and all $k$. By an inductive process, we find an increasing sequence $n_k$ such that $x_{n_i+j}^{(i)}\precsim_1 x_{n_k}^{(k)}$ if $i+j\leq k$.
	
	To see this, set $n_1=0$ and using that $x_1\precsim x_2$, find $n_2$ such that $x_{n_1+1}^{(1)}=x_1^{(1)}\precsim_1x_{n_2}^{(2)}$. If $n_i$ is constructed for $i\leq k$, we use that $x_1,\dots,x_k\precsim x_{k+1}$ to find $n_{k+1}$ such that $x_{n_i+k}^{(1)},x_{n_2+k-1}^{(2)},\dots,x_{n_k+1}^{(k)}\precsim_1x_{n_{k+1}}^{(k+1)}$, and thus the induction is complete.
	
	After reindexing we may assume that $n_i=i$ and therefore $x_{i+j}^{(i)}\precsim_1 x_{i+j}^{(i+j)}$ for all $i,j$. Setting $y_n=x_n^{(n)}$, we have that $y:=(y_n)$ satisfies $[y]=\sup_n[x_k]$. This shows the first part of the statement.
	
	Assume now that $R$ is weakly $s$-unital, hence $x\precsim_1x$ for each $x\in M_\infty(R)$. Then, if $[(x_n)]\in\RLambda(R)$, the sequence $x^{(k)}:=(x_1,\dots,x_k,x_k,\dots)$ belongs to $T(R)$, and it is not difficult to show that $[(x_n)]=\sup_k [x^{(k)}]$.
	
	Using this fact, one may check that, given $[(x_n)],[(y_n)]\in \RLambda(R)$, we have $[(x_n)]\ll [(y_n)]$ in $\RLambda(R)$ if and only if there is $m$ such that $x_n\precsim_1 y_m$ for all $n$.
	
	From this, one gets that axioms (O2)-(O4) are satisfied in $\RLambda(R)$ and, combined with the first part of the proof, we obtain that $\RLambda(R)$ is a $\Cu$-semigroup.
\end{proof}

%==========================================================
In the weakly $s$-unital setting, the semigroup $\RLambda(R)$ can be conveniently identified with the monoid of intervals in the semigroup $\W(R)$. We make this connection explicit below, and in the sequel we will use both pictures interchangeably. Intervals have been used in many places, in connection with \Cs{s} and other algebraic structures; see, for example, \cite{Weh96}, \cite{Per97}, \cite{AntBosPer11}, or \cite{APT-Memoirs2018}. Our discussion below consists of well-known facts on intervals.
%==========================================================
\begin{parag}[Intervals]
	\label{par:intervals}
	Let $M$ be a positively ordered monoid. Recall that an \emph{interval} in $M$ is a subset $I\subseteq M$ which is upward directed and downward hereditary. The set of intervals is denoted by $\Lambda(M)$, and it becomes a positively ordered monoid by defining
	\[
	I+J=\{z\in M\mid z\leq x+y\text{ where }x\in I \text{ and }y\in J\},
	\]
	and where order is given by set inclusion.
	
	We say that an interval $I$ in $M$ is \emph{countably generated} provided $I$ has a countable, cofinal subset. Equivalently, there is an increasing sequence $(x_n)$ in $I$ such that $I=\{x\in M\mid x\leq x_n\text{ for some }n\}$.
	The set of countably generated intervals in $M$ is denoted by $\Lambda_\sigma(M)$, which is clearly a positively ordered submonoid of $\Lambda (M)$. Indeed, if $I$ and $J$ have countable cofinal subsets $(x_n)$ and $(y_n)$, respectively, then $(x_n+y_n)$ is a countable cofinal subset for $I+J$.
	
	We have a positively ordered monoid morphism $\phi\colon M\to \Lambda_\sigma(M)$ given by $\phi(x)=[0,x]$, which is an order-embedding. Notice that every increasing sequence $(I_n)$ in $\Lambda_\sigma(M)$ has a supremum, simply given by $\cup_nI_n$. From this, and writing every interval $J\in \Lambda_\sigma(M)$ as $J=\cup_n[0,y_n]$ for an increasing cofinal sequence $(y_n)$, it follows easily that $I\ll J$ if and only if there is $y\in J$ such that $I\subseteq [0,y]$.
	
	It is then clear that, for each $I\in \Lambda_\sigma(M)$ and $x\in I$, we have $[0,x]\ll I$ and, if $(x_n)$ is an increasing sequence in $I$ which is a countable cofinal subset, then $I=\sup [0,x_n]$ with $[0,x_n]\ll [0,x_n]\ll [0,x_{n+1}]$.
	
	It is also easy to verify that addition in $\Lambda_\sigma(M)$ is compatible with suprema and the compact containment relation.
\end{parag}

%=========================================================
\begin{lemma}
	Let $M$ be a positively ordered monoid. Then, its set of countably generated intervals $\Lambda_\sigma (M)$ is always an algebraic $\Cu$-semigroup. 
\end{lemma}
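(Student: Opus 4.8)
The plan is to verify directly that $\Lambda_\sigma(M)$ satisfies the four axioms (O1)--(O4) and is algebraic, relying throughout on the facts assembled in \autoref{par:intervals}: the order-embedding $\phi\colon M\to\Lambda_\sigma(M)$, $x\mapsto[0,x]$, the description of the order by inclusion, the characterization that $I\ll J$ holds if and only if $I\subseteq[0,y]$ for some $y\in J$, and the compatibility of addition with suprema and with $\ll$. Most of the work is bookkeeping; the one place that requires genuine care is closure under the relevant suprema.

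For (O1), given an increasing sequence $(I_n)$ in $\Lambda_\sigma(M)$, the candidate supremum is $\bigcup_n I_n$, which is plainly an upward-directed, downward-hereditary subset of $M$ and is the least upper bound for inclusion. The point to check is that it lies in $\Lambda_\sigma(M)$: choosing for each $n$ a countable cofinal subset of $I_n$ and taking the (countable) union produces a countable cofinal subset of $\bigcup_n I_n$, so the union is again countably generated. For (O2), I would fix $I$ together with an increasing cofinal sequence $(x_n)$ in $I$; then $I=\sup_n[0,x_n]$, and since $x_n\in[0,x_{n+1}]$ while $[0,x_n]\subseteq[0,x_n]$, the characterization of $\ll$ gives $[0,x_n]\ll[0,x_{n+1}]$. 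Hence $([0,x_n])$ is a rapidly increasing sequence with supremum $I$.

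Axioms (O3) and (O4) are precisely the statements, recorded in \autoref{par:intervals}, that addition on $\Lambda_\sigma(M)$ is compatible with $\ll$ and with suprema of increasing sequences; I would simply unwind these from the definition of $I+J$ and the inclusion characterization of $\ll$. Finally, for algebraicity the key observation is that every principal interval $[0,x]$ is compact: taking $y=x\in[0,x]$ in the $\ll$-characterization yields $[0,x]\ll[0,x]$. Since by (O2) every $I$ equals $\sup_n[0,x_n]$, a supremum of such compact elements, $\Lambda_\sigma(M)$ is algebraic.

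I expect no serious obstacle: once \autoref{par:intervals} is in hand, each axiom becomes a direct transcription of a fact already established there. The only step demanding attention is the closure in (O1)—verifying that $\bigcup_n I_n$ remains countably generated, and hence that it is the supremum within $\Lambda_\sigma(M)$ and not merely within the larger monoid $\Lambda(M)$.
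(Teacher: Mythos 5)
Your proposal is correct and follows essentially the same route as the paper: the paper's proof consists of a single reference to the facts assembled in \autoref{par:intervals}, and your argument is precisely the (correct) unwinding of those facts --- suprema given by unions (with the countable-cofinality check), the characterization of $\ll$ via principal intervals, compatibility of addition with $\ll$ and suprema, and compactness of the $[0,x]$. No gaps.
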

\begin{proof}
	This follows directly from the discussion carried out in  \autoref{par:intervals}.
\end{proof}

%=========================================================
\begin{parag}[The semigroup $\WLambda(R)$]\label{dfn:IntR}
	Let $R$ be a weakly $s$-unital ring. We let
	\[
	\WLambda(R)=\Lambda_\sigma(\W(R)).
	\]
	Notice that, if for example $D$ is any division ring, one has $\W(D)\cong\mathbb{N}$, where the isomorphism is given by assigning to each matrix its rank. It follows that $\WLambda (D)\cong \overline{\N}$. 
	
	Another example is given by purely infinite simple rings. Recall that a unital simple ring $R$ is said to be \emph{purely infinite} provided $R$ is not a division ring and, for every non-zero element $a\in R$, there are elements $x,y\in R$ such that $xay=1$ (see \cite[Theorem 1.6]{AraGooPar02}) . This implies in particular that $a\sim_1 b$ for any non-zero elements $a,b\in R$, and thus $\W(R)\cong\{0,\infty\}$ and also $\WLambda(R)\cong\{0,\infty\}$.
\end{parag}

%===========================================================
\begin{proposition}\label{lma:PictureSR}
	Let $R$ be a weakly $s$-unital ring. Then there is an ordered monoid isomorphism
	\[
	\WLambda(R)\cong \RLambda(R).
	\]
\end{proposition}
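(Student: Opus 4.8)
The plan is to define the obvious comparison map between the two pictures at the level of classes in $\W(R)$ and to check that it is an order-isomorphism of monoids. Given $(a_n)\in T(R)$, the relation $a_n\precsim_1 a_{n+1}$ says precisely that $[a_n]\le [a_{n+1}]$ in $\W(R)$, so $([a_n])_n$ is an increasing sequence in $\W(R)$. I would therefore set
\[
\Phi\big([(a_n)]\big)=I_{(a_n)}:=\{x\in\W(R)\mid x\le [a_n]\text{ for some }n\}.
\]
This set $I_{(a_n)}$ is downward hereditary by construction and upward directed because $([a_n])$ is increasing (any $x\le[a_n]$ and $y\le[a_m]$ are both below $[a_{\max(n,m)}]$); it is countably generated, with cofinal sequence $([a_n])$. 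Hence $\Phi$ lands in $\WLambda(R)=\Lambda_\sigma(\W(R))$.

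The heart of the argument is the single equivalence
\[
(a_n)\precsim (b_n)\quad\Longleftrightarrow\quad I_{(a_n)}\subseteq I_{(b_n)},
\]
both directions being immediate: $(a_n)\precsim(b_n)$ means that for each $n$ there is $m$ with $[a_n]\le[b_m]$, i.e. $[a_n]\in I_{(b_n)}$, which (as the $[a_n]$ are cofinal in $I_{(a_n)}$) is the same as $I_{(a_n)}\subseteq I_{(b_n)}$. This at once shows that $\Phi$ is well-defined on $\sim$-classes, injective, and both order-preserving and order-reflecting, so that $\Phi$ is an order-embedding. For surjectivity I would take any $I\in\WLambda(R)$, choose an increasing cofinal sequence $(x_n)$ in $I$, and lift each $x_n$ to a representative $a_n\in M_\infty(R)$ with $[a_n]=x_n$; then $x_n\le x_{n+1}$ forces $a_n\precsim_1 a_{n+1}$, so $(a_n)\in T(R)$ and $\Phi([(a_n)])=I$.

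It remains to check that $\Phi$ is a monoid morphism. It clearly sends the class of the zero sequence to the zero interval $\{0\}$. For additivity, the key observation (a direct consequence of the equivalence above) is that $I_{(c_n)}=\Phi(w)$ for \emph{any} representative $(c_n)$ of $w$, not merely a distinguished one; in particular the term classes of any representative form a cofinal increasing sequence for $\Phi(w)$. Now the sum in $\RLambda(R)$ is computed via finite matricial representatives $(u_n)$ and $(u_n')$ of $w$ and $w'$, with $w+w'=[(u_n\oplus u_n')]$. Thus $\Phi(w+w')=I_{(u_n\oplus u_n')}$ is generated by the cofinal sequence $([u_n\oplus u_n'])=([u_n]+[u_n'])$, while $([u_n])$ and $([u_n'])$ are cofinal in $\Phi(w)$ and $\Phi(w')$ respectively. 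By the description of addition in $\Lambda_\sigma(\W(R))$ recalled in \autoref{par:intervals}, where $(x_n+y_n)$ is cofinal in the sum of intervals having cofinal sequences $(x_n)$ and $(y_n)$, this sequence is cofinal in $\Phi(w)+\Phi(w')$, and so the two intervals coincide.

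There is no serious obstacle in this argument; the content is essentially organizational, resting on the one clean equivalence above. The only point demanding a little care is the additivity check, where one must first observe that the image interval is independent of the chosen representative, so that the finite matricial representatives used to define the sum in $\RLambda(R)$ can be fed directly into the cofinal-sum description of addition in $\Lambda_\sigma(\W(R))$.
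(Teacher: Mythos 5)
Your proof is correct and follows essentially the same route as the paper's: the paper constructs the inverse map $\varphi\colon\WLambda(R)\to\RLambda(R)$ sending an interval to (the class of a lift of) a cofinal increasing sequence, but both arguments rest on the same equivalence between $(a_n)\precsim(b_n)$ and containment of the generated intervals. Your treatment of additivity is somewhat more detailed than the paper's, which simply asserts that preservation of addition is easy to verify.
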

\begin{proof}
	Given $I\in\WLambda(R)$, let $([a_n])$ be an increasing sequence in $\W(R)$ such that it is a cofinal subset for $I$. Define $\varphi\colon \WLambda(R)\to\RLambda(R)$ by $\varphi(I)=[(a_n)]$.
	
	If $I\subseteq J$ and $([a_n])$, $([b_n])$ are increasing, countable cofinal subsets for $I$ and $J$, respectively, then for each $n$, there is $m$ such that $[a_n]\leq [b_m]$. Therefore $(a_n)\precsim (b_n)$ and thus $\varphi$ is well defined and order-preserving. It is easy to verify that $\varphi$ also preserves addition. Evidently, $\varphi$ is surjective.
	
	Let us check that $\varphi$ is an order-embedding. Suppose that $I,J\in\WLambda(R)$ satisfy $\varphi(I)\leq \varphi(J)$. Let $([a_n]), ([b_n])$ be increasing, countable cofinal sequences for $I$ and $J$, respectively. Then by definition of the order in $\RLambda(R)$ we have that, for each $n$, there is $m$ with $a_n\precsim_1 b_m$, which clearly implies that $I\subseteq J$.
	
	Therefore $\varphi$ is an ordered monoid isomorphism.
\end{proof}
%=========================================================
\begin{remark}
	We note that \autoref{lma:PictureSR} offers an alternative proof that $\RLambda(R)$ is an object in $\Cu$ in the weakly $s$-unital setting.
\end{remark}

%====================================================
\section{The Malcomlson semigroup, Sylvester rank functions, and dimension functions}
\label{sec:Malc}

In this section, we briefly recall the construction of the Malcomlson semigroup as introduced in \cite{HunLi21} and its relation to $\W(R)$.

\begin{parag}[The Malcomlson semigroup]
	Let $R$ be a unital ring. Following \cite{HunLi21} with a slight change of notation, we define a relation $\precsim_0$  in $M_\infty(R)$ by $a\precsim_0 b$ if either $a\precsim_1 b$ or $a=(\begin{smallmatrix} c & 0 \\ 0 & d \end{smallmatrix})$ and $b=(\begin{smallmatrix} c & e \\ 0 & d \end{smallmatrix})$ for suitable $c,d,e\in M_\infty(R)$.
	
	Define $\precsim_\mathrm{M}$ to be the transitive closure of $\precsim_0$, so that $a\precsim_\mathrm{M} b$ if and only if there exist $a_1,\dots,a_n\in M_\infty(R)$ with $a=a_1\precsim_0 a_2\precsim_0\dots \precsim_0 a_n=b$. Set $\sim_\mathrm{M}$ as the antisymmetrization of $\precsim_\mathrm{M}$ and define
	\[
	\W_\mathrm{M}(R)=M_\infty(R)/\!\!\sim_\mathrm{M}.
	\]
	Denote the elements in $\W_\mathrm{M}(R)$ by $[a]_\mathrm{M}$. It follows that $\W_\mathrm{M}(R)$ is a positively ordered abelian semigroup with addition given by $[a]+[b]=[(\begin{smallmatrix} a & 0 \\ 0 & b \end{smallmatrix})]$ and order induced by $\precsim_\mathrm{M}$. 
	
	Since clearly $\precsim_\mathrm{M}$ is weaker than $\precsim_1$, we have a positively ordered monoid morphism $\iota_\mathrm{M}\colon \W(R)\to\W_\mathrm{M}(R)$, given by $[a]\mapsto [a]_\mathrm{M}$.
	
	As shown in \cite[Lemma 5.1]{HunLi21}, if $A$ is a C*-algebra and $a,b\in M_\infty(A)$, one has that $a\precsim_\mathrm{M}b$ implies $a\precsim_\mathrm{Cu}b$. There is then a positively ordered monoid morphism $\tilde{\iota}_\mathrm{C}\colon \W_{\mathrm{M}}(A)\to\W_\mathrm{C}(A)$. Combining this with \autoref{Cucls_Rmk}, we have the following commutative diagram
	\[
	\xymatrix{
		\W(A) \ar[r]^{\iota_\mathrm{M}} \ar[dr]_{\iota_\mathrm{C}}& \W_\mathrm{M}(A) \ar[d]^{\tilde{\iota}_\mathrm{C}}\\
		& \W_\mathrm{C}(A)
	}.
	\]
\end{parag}
%===================================================
\begin{parag}[States, Sylvester rank functions and dimension functions]
	\label{par:dim_func}
	Given a positively ordered monoid $S$ with an order-unit $u$, recall that a \emph{state} on $S$ normalized at $u$ is a positively ordered semigroup map $s\colon S\to [0,\infty)$ such that $s(u)=1$. The set of states is customarily denoted by $\mathrm{St}(S,u)$.
	
	Let $R$ be a unital ring. A map $d\colon M_\infty(R)\to [0,\infty)$ such that the following conditions hold:
	\begin{enumerate}[{\rm (i)}]
		\item $d(0)=0$ and $d(1)=1$;
		\item $d(ab)\leq d(a), d(b)$ or, equivalently, $d(a)\leq d(b)$ whenever $a\precsim_1 b$;
		\item $d(a)+d(b)=d(\begin{smallmatrix} a & 0 \\ 0 & b\end{smallmatrix})$,
	\end{enumerate}
	will be called a \emph{dimension function}. If, furthermore, $d$ satisfies
	\begin{enumerate}[{\rm (iv)}]
		\item  $d(a)+d(b)\leq d(\begin{smallmatrix} a & c\\ 0 & b\end{smallmatrix})$,
	\end{enumerate}
	then we say that $d$ is a \emph{Sylvester matrix rank function} for $R$.
	
	Denote the set of dimension functions by $\mathrm{DF}(R)$, and the subset of Sylvester matrix rank functions by $\mathbb{P}(R)$. Note that $\mathbb{P}(R)$ may be identified with the states on $\W_\mathrm{M}(R)$ normalized at $[1]$. Indeed, given $d\in \mathbb{P}(R)$, set $s_d([a])=d(a)$. Conversely, if $s\in\mathrm{St}(W_\mathrm{M}(R),[1])$, define $d_s(a)=s([a])$. Likewise, one may check that the set of dimension functions may be identified with $\mathrm{St}(\W(R),[1])$.
\end{parag}
%==========================================================================
\begin{lemma}
	\label{lma:CvsMclm}
	Let $R$ be an $s$-unital ring. Let $a,b,c\in M_\infty(R)$. If $a$ is a von Neumann regular element, then
	\[
	\begin{pmatrix} a & 0 \\ 0 & b\end{pmatrix}\precsim_1 \begin{pmatrix} a & c \\ 0 & b\end{pmatrix}.
	\]
	In particular, if $R$ is a unital von Neumann regular ring, then $\precsim_1$ is equivalent to $\precsim_{\mathrm{M}}$ and thus the natural map $\iota_{\mathrm{M}}\colon \W(R)\to \W_{\mathrm{M}}(R)$ is an order-isomorphism, whence $\mathrm{DF}(R)=\mathbb{P}(R)$.
\end{lemma}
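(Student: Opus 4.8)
The plan is to prove the displayed subequivalence by an explicit factorization, and then to deduce the ``in particular'' clause formally.

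For the first assertion, write $D=\left(\begin{smallmatrix} a & 0 \\ 0 & b\end{smallmatrix}\right)$ and $M=\left(\begin{smallmatrix} a & c \\ 0 & b\end{smallmatrix}\right)$; I must produce $r,t\in M_\infty(R)$ with $D=rMt$. Since $a$ is von Neumann regular, fix $x\in M_\infty(R)$ with $a=axa$. As $R$ is $s$-unital, so is $M_\infty(R)$, and hence by \cite[Lemma 2.2]{AAlgCol2004} there is $u\in M_\infty(R)$ acting as a two-sided unit on the finitely supported matrices $a,b,c,x$ (in the unital case simply $u=1$). The key observation is that, after using $ax$ as a left support for $a$, the $(1,2)$ corner becomes a left multiple of $a$ and can therefore be removed by an elementary column operation. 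Concretely, set
\[
r=\begin{pmatrix} ax & 0 \\ 0 & u\end{pmatrix}, \qquad t=\begin{pmatrix} u & -xc \\ 0 & u\end{pmatrix}.
\]
Then $rM=\left(\begin{smallmatrix} axa & axc \\ 0 & b\end{smallmatrix}\right)=\left(\begin{smallmatrix} a & axc \\ 0 & b\end{smallmatrix}\right)$, and multiplying on the right by $t$ turns the corner into $-axc+axcu=-axc+axc=0$, while leaving the diagonal entries $au=a$ and $bu=b$ unchanged; hence $rMt=D$, that is, $D\precsim_1 M$. This computation is the whole content of the statement, and the only subtlety is the bookkeeping of the local unit $u$ in the non-unital $s$-unital setting.

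For the second assertion, recall that $\precsim_\mathrm{M}$ is the transitive closure of $\precsim_0$ and that $\precsim_1$ implies $\precsim_\mathrm{M}$. Since $\precsim_1$ is transitive, to obtain the reverse implication it suffices to show $\precsim_0\,\subseteq\,\precsim_1$. The first alternative defining $\precsim_0$ is $\precsim_1$ itself, and the second alternative is precisely $\left(\begin{smallmatrix} c & 0 \\ 0 & d\end{smallmatrix}\right)\precsim_1\left(\begin{smallmatrix} c & e \\ 0 & d\end{smallmatrix}\right)$: as $R$ is unital von Neumann regular, every matrix over $R$ is again von Neumann regular, so $c$ is regular and the first part applies. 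Thus $\precsim_1$ and $\precsim_\mathrm{M}$ coincide on $M_\infty(R)$, whence $\sim_1=\sim_\mathrm{M}$ and the map $\iota_\mathrm{M}\colon \W(R)\to\W_\mathrm{M}(R)$, $[a]\mapsto[a]_\mathrm{M}$, is a bijection that preserves and reflects the order, i.e.\ an order-isomorphism.

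Finally, under the identifications recalled in \autoref{par:dim_func}, one has $\mathrm{DF}(R)\cong\mathrm{St}(\W(R),[1])$ and $\mathbb{P}(R)\cong\mathrm{St}(\W_\mathrm{M}(R),[1])$; since the order-isomorphism $\iota_\mathrm{M}$ carries $[1]$ to $[1]$, it induces a bijection between the two state spaces, giving $\mathrm{DF}(R)=\mathbb{P}(R)$. The main (and essentially the only) obstacle is discovering the factorization $D=rMt$ in the first paragraph; everything after it is formal.
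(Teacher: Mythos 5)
Your proposal is correct and follows essentially the same route as the paper: the paper's factorization uses $r=\bigl(\begin{smallmatrix} aa' & 0\\ 0 & b'\end{smallmatrix}\bigr)$ and $t=\bigl(\begin{smallmatrix} a'a & -a'c \\ 0 & b'\end{smallmatrix}\bigr)$ with $a=aa'a$, $b=b'b=bb'$, $c=cb'=b'c$, which is the same elementary column operation clearing the corner that you perform with your local unit $u$ in place of the separate support elements. The deduction of the ``in particular'' clause by reducing $\precsim_0$ to $\precsim_1$ and taking transitive closures also matches the paper's argument.
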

\begin{proof}
	Let $S=M_\infty(R)$. Since $a$ is von Neumann regular and $R$ is $s$-unital, there are elements $a',b'\in S$ such that $a=aa'a$ and $b=b'b=bb'$ and $c=cb'=b'c$. Therefore
	\[
	\begin{pmatrix} aa' & 0\\ 0 & b'\end{pmatrix}\cdot \begin{pmatrix} a & c \\ 0 & b \end{pmatrix}\cdot \begin{pmatrix} a'a & -a'c \\ 0 & b'\end{pmatrix}=\begin{pmatrix} a & -aa'c+aa'cb' \\ 0 & b'bb'\end{pmatrix}=\begin{pmatrix} a & 0 \\ 0 & b\end{pmatrix}.
	\]
	The second part of the statement follows by definition of $\precsim_\mathrm{M}$: if $a\precsim_\mathrm{M}b$, then there are elements $a_1,\dots,a_n\in M_\infty(R)$ such that $a=a_1\precsim_0\dots\precsim_0 a_n=b$. For each $i$, we have by definition of $\precsim_0$ and the first part of the proof that $a_i\precsim_1 a_{i+1}$, whence $a\precsim_1 b$.
\end{proof}
%====================================================
\begin{parag}[Functionals]
	\label{par:functional} Let $S$ be a $\Cu$-semigroup. Recall that a \emph{functional} on $S$ is a positively ordered monoid morphism $\lambda\colon S\to [0,\infty]$ that respects suprema of increasing sequences. The set of functionals on $S$ is denoted by $F(S)$.
	
	%If $S$ has a compact order-unit $u$, then we denote by $F_u(S)$ the set of functionals $\lambda\in F(S)$ such that $\lambda(u)=1$.
\end{parag}

\begin{proposition}
	\label{lma:functionals} Let $R$ be a unital ring. Then, any dimension function $d$ on $R$ induces a unique functional $\lambda_d \in F(\WLambda(R))$ such that $\lambda_d (\phi ([x])) = d(x)$ for all $x\in M_{\infty}(R)$, where $\phi\colon \W (R)\to \WLambda (R)$ is the canonical homomorphism. In particular, this is the case for any Sylvester matrix rank function.
\end{proposition}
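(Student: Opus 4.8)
The plan is to exploit the identification, recorded in \autoref{par:dim_func}, of dimension functions on $R$ with states on $\W(R)$ normalized at $[1]$: the given $d$ descends through property (ii) to a positively ordered monoid morphism $s_d\colon\W(R)\to[0,\infty)$ with $s_d([x])=d(x)$. Since $\WLambda(R)=\Lambda_\sigma(\W(R))$, every element $I$ is a countably generated interval, so by \autoref{par:intervals} it admits an increasing cofinal sequence $([a_n])$ in $\W(R)$ with $I=\bigcup_n[0,[a_n]]$. I would then define
\[
\lambda_d(I)=\sup_n s_d([a_n])=\sup_n d(a_n)\in[0,\infty].
\]

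First I would check that this is independent of the chosen cofinal sequence. If $([a_n])$ and $([b_n])$ are two increasing cofinal sequences for $I$, then for each $n$ there is $m$ with $a_n\precsim_1 b_m$, whence $d(a_n)\le d(b_m)\le\sup_k d(b_k)$ by monotonicity of $s_d$; taking suprema and using symmetry gives equality. The same monotonicity argument, applied when $I\subseteq J$, shows $\lambda_d$ is order-preserving. For additivity I would use that $([a_n\oplus b_n])$ is cofinal for $I+J$ (as in \autoref{par:intervals}) together with property (iii) of $d$, so that $\lambda_d(I+J)=\sup_n(d(a_n)+d(b_n))=\sup_n d(a_n)+\sup_n d(b_n)=\lambda_d(I)+\lambda_d(J)$, where the middle equality is the elementary fact that the supremum of a sum of two increasing sequences in $[0,\infty]$ is the sum of the suprema. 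Clearly $\lambda_d(0)=d(0)=0$.

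To see that $\lambda_d$ respects suprema of increasing sequences $(I_k)$ with $I=\bigcup_k I_k$: order-preservation gives $\sup_k\lambda_d(I_k)\le\lambda_d(I)$, and for the reverse inequality I would take a cofinal sequence $([a_n])$ for $I$, observe that each $[a_n]$ lies in some $I_{k_n}$ so that $d(a_n)\le\lambda_d(I_{k_n})\le\sup_k\lambda_d(I_k)$, and take the supremum over $n$. Thus $\lambda_d\in F(\WLambda(R))$. The normalization $\lambda_d(\phi([x]))=d(x)$ is immediate, since $\phi([x])=[0,[x]]$ has the constant sequence $([x])$ as a cofinal subset.

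Finally, for uniqueness I would note that any $I$ equals $\sup_n\phi([a_n])$ for a cofinal sequence $([a_n])$, because $\phi([a_n])=[0,[a_n]]$ and suprema in $\Lambda_\sigma(\W(R))$ are given by unions. Hence any functional $\mu$ with $\mu\circ\phi=d$ must satisfy $\mu(I)=\sup_n\mu(\phi([a_n]))=\sup_n d(a_n)=\lambda_d(I)$. I expect the only genuinely delicate point to be the well-definedness together with additivity, both of which rest on the interplay between the cofinal-sequence description of countably generated intervals and the monotonicity and additivity built into $d$; everything else is formal.
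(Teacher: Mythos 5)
Your proof is correct and takes essentially the same route as the paper: define $\lambda_d$ as the supremum of $d$ along an increasing cofinal (equivalently, representative) sequence, and verify well-definedness, additivity, preservation of suprema, and uniqueness from cofinality and the properties (ii)--(iii) of $d$. The only difference is that you work directly in the interval picture $\WLambda(R)=\Lambda_\sigma(\W(R))$, where suprema are unions, whereas the paper passes to $\RLambda(R)$ via \autoref{lma:PictureSR} and uses the diagonal construction of suprema from \autoref{rmk:ifneedbe}; this is a cosmetic difference and your version of the suprema step is, if anything, slightly more transparent.
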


\begin{proof}
	By \autoref{lma:PictureSR}, $\WLambda(R)$ is order-isomorphic to $\RLambda(R)$. Let $x\in \RLambda(R)$ and let $(x_n)$ be a representative for its class, which by definition satisfies $x_n\precsim_1x_{n+1}$ for each $n$. Given a dimension function $d\in DF(R)$, set $\lambda_d(x)=\sup_n d(x_n)$. By construction, if $(x_n)\precsim (y_n)$, then for each $n$ there is $m$ with $x_n\precsim_1 y_m$, and thus $\sup_n d(x_n)\leq \sup_md(y_m)$. This yields a well defined, order-preserving map $\lambda_d\colon \RLambda(R)\to [0,\infty]$.
	
	Since addition in $\RLambda(R)$ is defined componentwise, we see that $\lambda_d$ is additive. It remains to check that $\lambda_d$ preserves suprema of increasing sequences. To do so, we recall how suprema of increasing sequences are constructed in $\RLambda(R)$; see \autoref{rmk:ifneedbe}. Given a sequence $(x_k)$ in $\RLambda(R)$ such that $x_k\precsim x_{k+1}$ for all $k$, we write $x_k=(x_n^{(k)})_n$ and assume, reindexing if necessary, that $x_{i+j}^{(i)}\precsim_1 x_{i+j}^{(i+j)}$ for all $i,j$. Then $y=(y_n)$, where $y_n:=x_n^{(n)}$, is the supremum of the sequence $(x_k)_k$.
	
	Since $x_n^{(k)}=x_{n-k+k}^{(k)}\precsim_1 x_n^{(n)}$ whenever $n\geq k$, we have
	\[
	\sup_k\lambda_d(x_k)=\sup_k\sup_n d(x_n^{(k)})\leq \sup_n d(x_n^{(n)})=\lambda_d(y).
	\]
	Since the reverse inequality $\lambda _d(y)\le \sup_k\lambda_d(x_k)$ is obvious, this shows that $\lambda_d(y)=\sup_k\lambda_d(x_k)$.
	
	Uniqueness of $\lambda_d$ follows from the fact that $[(x_n)]= \sup _n \phi ([x_n])$ for $[(x_n)]\in \RLambda (R)$.
\end{proof}

%=========================================================
\section{The Cuntz semigroup of a ring}\label{sec:SRCPR}

As motivated in the introduction, in this section  we pursue a construction of a semigroup using countably generated projective modules over a ring $R$. We first define a subsemigroup $\SR(R)$ of $\RLambda(R)$ consisting of certain classes of sequences that will yield a presentation of a countably generated module. Secondly, we will use a direct approach building another semigroup $\CP(R)$ as equivalence classes of countably generated projective right $R$-modules using a relation weaker than isomorphism and related to the one used in \cite{Coward2008}.

The main result of the section is that the semigroups $\SR(R)$ and $\CP(R)$ happen to be isomorphic in complete generality (see \autoref{thm:nonunitalSRCP}). The semigroup $\SR(R)$ is closed under suprema of increasing sequences, and it is the invariant that best resembles the Cuntz semigroup for rings. However, it is not obviously an object in the category $\Cu$. We remedy this by considering the pair $(\RLambda(R),\SR(R))$, of which the first component belongs to $\Cu$; see \autoref{sec:CatSCu}.

%=========================================================
\begin{parag}[The semigroup $\SR(R)$]
	\label{subsec:WR}
	Let $R$ be a ring. Let $\preS (R)$ be the subset of $T(R)$ consisting of those sequences $(x_n)_n$, with $x_n\in M_{\infty}(R)$ for all $n\in\mathbb N$, such that for every $n$ there exists $y_{n+1}\in M_{\infty}(R)$ with $y_{n+1}x_{n+1}x_{n}=x_{n}$.
	
	We define $\SR(R)=\preS (R)/{\sim}$, which by construction is a subset of $\RLambda(R)$; see \autoref{pgr:lambdaseq}. We call $\SR(R)$ the \textit{Cuntz semigroup} of $R$.  Let us check it is a subsemigroup of $\RLambda(R)$. We continue to use the terminology introduced in \autoref{sec:WR}, and thus given an element $w\in \SR(R)$, a {\it finite matricial representative} of $w$ is any sequence $(u_n)$ such that $u_n\in M_{k_{n+1}\times k_n}(R)$, where $(k_n)$ is a sequence of positive integers, for which there exists $v_{n+1}\in M_{k_{n+1}\times k_{n+2}}(R)$ such that $u_n= v_{n+1}u_{n+1}u_n$ for all $n$, and with $w=[(x_n)]$, where $x_n$ is the infinite matrix represented by $u_n$ for each $n\in \N$. Therefore, for $w,w'\in \SR (R)$, we let $(u_n)$ and $(u'_n)$ be finite matricial representatives of $w, w'$, respectively, and define $w+w'= [(u_n\oplus u_n')]$.
	
	That addition is well defined, associative, commutative, and that the class of the zero sequence is the identity follows as in the arguments in Lemmas~\ref{lma:WRPoM} and \ref{lma:LambdaseqPoM}.
	
	The definition of $\SR(R)$ is admittedly one-sided, and one can define a left version of the semigroup $\SR _{l}(R)$ as the semigroup whose elements are equivalence classes under $\sim$ of sequences $(x_{n})_{n}$ in $M_{\infty}(R)$ such that
	\begin{equation*}
		x_{n}=x_{n}x_{n+1}y_{n+1}
	\end{equation*}
	for every $n\in\mathbb{N}$.
	
	It follows that $\SR _{l}(R)\cong \SR (R^{\text{op}})$. However, $\SR _l (R)$ is not always isomorphic to $\SR (R)$, as the example below testifies.
\end{parag}

%=========================================================
\begin{example}\label{rmk:LeftWR}
	There exists a ring $R$ such that $\SR(R)\not\cong\SR_l(R)$.
\end{example}
\begin{proof}
	Let $K$ be a field and let $K[x_{0},x_{1},\ldots]$ be the free algebra on countably many variables subject to the (non-commutative) relations $x_{i+1}x_{i}=x_{i}$, and take the subring $R$ of polynomials with zero constant term.
	
	Given a monomial $p=\prod_{i\leq j\leq n}x_{j}^{t_{j}}$ with $t_{i},t_{n}>0$, we will write $\text{st}(p)=i$ and $\text{end}(p)=n$.
	
	Then, note that given any other monomial $q=x_{l}^{s_{l}}\ldots x_{m}^{s_{m}}$ with $s_{l},s_{m}>0$, we either have $pq=q$ if $\text{st}(p)>\text{st}(q)$ or $pq>p$ in the lexicographic order otherwise. Note that we always have $\text{st}(p)\geq \text{st}(pq)$.
	
	Thus, let $P=A_{1}p_{1}+\ldots + A_{n}p_{n}\in M_{\infty} (R)$ be a non-zero polynomial with $p_{1},\ldots ,p_{n}$ distinct monomials and $A_i\in M_{\infty}(K)\setminus \{0\}$ for all $i$. We may assume that $\text{st}(p_{1})\geq \text{st}(p_{i})$ for each $i$ and that $p_{1}$ is the smallest monomial in the lexicographic order among all monomials $p_{i}$ with $\text{st}(p_{1})=\text{st}(p_{i})$.
	
	Now let $Q=B_{1}q_{1}+\ldots + B_{m}q_{m}\in M_{\infty} (R)$ be another polynomial with $q_{1},\ldots ,q_{m}$ monomials and $B_j\in M_{\infty}(K)$. If $PQ=P$, $A_{1}p_{1}$ must be equal to a combination of the form $M_{1}p_{i_{1}}q_{j_{1}}+\ldots + M_{r}p_{i_{r}}q_{j_{r}}$, where $M_k\in M_{\infty}(K)$. In particular, $p_{i_{k}}q_{j_{k}}=p_{1}$ for each $k$.
	
	However, this would imply 
	$$ \text{st}(p_1)\ge \text{st}(p_{i_k})\ge \text{st}(p_{i_{k}}q_{j_k})=\text{st}(p_{1}),$$
	and so $\text{st}(p_{i_k})= \text{st}(p_1)$ for all $k$. This implies $p_1= p_{i_{k}}q_{j_{k}}>p_{i_{k}}\geq p_{1}$, a contradiction. Therefore, we must have $P=0$, which shows that $\SR _{l}(R)=0$.
	
	This is not the case for $\SR (R)$, since the sequence $(x_{0},x_{1},x_{2},\ldots)$ is in $\preS (R)$.
\end{proof}

The lemma below shows one of the main properties of $\SR (R)$: suprema exist for increasing sequences.

\begin{lemma}\label{W(R)_closed}
	Let $R$ be any ring. Then every increasing sequence in $\SR (R)$ has a supremum.
\end{lemma}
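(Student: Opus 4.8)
The plan is to reduce the statement to the existence of suprema in the ambient monoid $\RLambda(R)$, which is already established in \autoref{rmk:ifneedbe}, and then to verify that the supremum constructed there actually lands inside the subsemigroup $\SR(R)$. Concretely, given an increasing sequence $([x_k])_k$ in $\SR(R)$, I would first regard it as an increasing sequence in $\RLambda(R)$ and invoke the construction in the proof of \autoref{rmk:ifneedbe}: writing $x_k=(x^{(k)}_n)_n$, after an inductive reindexing one may assume $x^{(i)}_{i+j}\precsim_1 x^{(i+j)}_{i+j}$ for all $i,j$, and then the diagonal sequence $y=(y_n)$ with $y_n:=x^{(n)}_n$ satisfies $[y]=\sup_k[x_k]$ in $\RLambda(R)$.

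The real work is to show that $y$ can be taken to lie in $\preS(R)$, i.e.\ that there exist elements $w_{n+1}\in M_\infty(R)$ with $w_{n+1}y_{n+1}y_n=y_n$ for every $n$. This does \emph{not} follow automatically: each individual sequence $x_k$ satisfies the defining relation of $\preS(R)$ internally (there are $y^{(k)}_{n+1}$ with $y^{(k)}_{n+1}x^{(k)}_{n+1}x^{(k)}_n=x^{(k)}_n$), but the diagonal $y_n=x^{(n)}_n$ mixes different levels $k$, so the internal witnesses do not directly combine. The key step will therefore be to \emph{replace} the naive diagonal by a genuine $\preS(R)$-representative of the same $\sim$-class. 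Since $[y]=\sup_k[x_k]$ is a supremum, it is in particular an upper bound, and for each $n$ the element $y_n=x^{(n)}_n$ is $\precsim_1$-below some entry of a sequence already known to lie in $\preS(R)$; conversely each $x^{(k)}_n$ is $\precsim_1$ some $y_m$. The idea is to extract from these mutual domination relations a \emph{subsequence} of the $y_n$'s, or a suitably interleaved sequence, whose consecutive terms satisfy the required factorization $w_{n+1}y_{n+1}y_n=y_n$; passing to a cofinal subsequence does not change the $\sim$-class, so it does not change the supremum.

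I expect the main obstacle to be precisely this extraction: producing the witnesses $w_{n+1}$ for the defining relation of $\preS(R)$ along the diagonal. The natural approach is to carry out the inductive reindexing in the proof of \autoref{rmk:ifneedbe} more carefully, so that at each stage the new index $n_k$ is chosen not merely to make $x^{(i)}_{n_i+j}\precsim_1 x^{(k)}_{n_k}$ hold, but also to ensure that the stronger relation from $\preS(R)$ propagates. Using that $x^{(i)}_{n_i+j}\precsim_1 x^{(k)}_{n_k}$ means there are $r,t$ with $x^{(i)}_{n_i+j}=r\,x^{(k)}_{n_k}\,t$, and that within the sequence $x_k$ there is a factorization $y^{(k)}_{m+1}x^{(k)}_{m+1}x^{(k)}_m=x^{(k)}_m$, one can multiply these relations to manufacture a witness $w$ satisfying $w\,y_{n+1}\,y_n=y_n$ for consecutive diagonal entries. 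The bookkeeping of which elements multiply on which side is the delicate part, but no genuinely new idea beyond those in \autoref{rmk:ifneedbe} and the definition of $\preS(R)$ should be needed; once the diagonal representative is shown to lie in $\preS(R)$, its class is an element of $\SR(R)$ and is the desired supremum, completing the proof.
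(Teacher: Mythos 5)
Your overall strategy is exactly the paper's: take the supremum $z=[(z_n)]$ of the sequence in $\RLambda(R)$ as constructed in \autoref{rmk:ifneedbe}, with $z_n=x_n^{(n)}$ after reindexing, and then show that this class admits a representative in $\preS(R)$. You also correctly locate the crux, namely that the diagonal mixes levels so the internal witnesses $y_{n+1}^{(k)}$ do not combine directly. However, the execution you sketch does not quite close. Passing to a cofinal subsequence of the diagonal cannot work on its own: no subsequence of $(x_n^{(n)})$ need satisfy the defining relation of $\preS(R)$, so some genuine modification of the entries is unavoidable. And the ``multiply the relations'' step, carried out literally, gives
\[
x_n^{(n)}=y_{n+1}^{(n)}x_{n+1}^{(n)}x_n^{(n)}=\bigl(y_{n+1}^{(n)}a_{n+1}\bigr)\,x_{n+1}^{(n+1)}\,b_{n+1}\,x_n^{(n)},
\]
where $x_{n+1}^{(n)}=a_{n+1}x_{n+1}^{(n+1)}b_{n+1}$ comes from the reindexed relation $x_{n+1}^{(n)}\precsim_1 x_{n+1}^{(n+1)}$. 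This is of the form $w\,z_{n+1}\,b_{n+1}\,z_n=z_n$, \emph{not} $w\,z_{n+1}\,z_n=z_n$: the stray factor $b_{n+1}$ between the two diagonal entries is precisely the obstruction, so the claim that one obtains a witness ``for consecutive diagonal entries'' is false as stated.

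The missing idea is a one-line change of representative that absorbs this factor: set $u_n:=x_n^{(n)}b_n$. Then the displayed identity becomes $u_n=\bigl(y_{n+1}^{(n)}a_{n+1}\bigr)u_{n+1}u_n$, so $(u_n)\in\preS(R)$, and since $u_n\precsim_1 z_n$ while $z_n=\bigl(y_{n+1}^{(n)}a_{n+1}\bigr)u_{n+1}x_n^{(n)}\precsim_1 u_{n+1}$, the sequences $(u_n)$ and $(z_n)$ are mutually cofinal and hence define the same class; thus $z\in\SR(R)$. This is exactly what the paper does. Your proposal has the right architecture and correctly flags where the work lies, but the step you defer to ``bookkeeping'' is the entire content of the proof, and the specific devices you name (subsequence extraction, witnesses for the unmodified diagonal) would not deliver it.
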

\begin{proof}
	Let $([x_k])_k$ be an increasing sequence in $\SR(R)$. Write $x_k=(x_n^{(k)})_n$ for each $k$, and find $y_{n+1}^{(k)}\in M_\infty(R)$ such that $x_n^{(k)}=y_{n+1}^{(k)}x_{n+1}^{(k)}x_n^{(k)}$.
	
	Note that $([x_k])_k$ also belongs to $\RLambda(R)$ and that the order in $\SR(R)$ and $\RLambda(R)$ is the same. By \autoref{rmk:ifneedbe}, the sequence $([x_k])_k$ has a supremum $z=[(z_n)]$ in $\RLambda(R)$. It is enough to check that $z\in\SR(R)$.
	
	To this end, recall from \autoref{rmk:ifneedbe} that, after a possible reindexing, one may assume that $x_{i+j}^{(i)}\precsim_1 x_{i+j}^{(i+j)}$ for all $i,j$ and then we take $z_n=x_n^{(n)}$. Since $x_n^{(n)}\precsim_1 x_{n+1}^{(n+1)}$, there are $a_{n+1}$, $b_{n+1}$ such that $x_n^{(n)}=a_{n+1}x_{n+1}^{(n+1)}b_{n+1}$. Thus
	\[
	x_n^{(n)}=y_{n+1}^{(n)}x_{n+1}^{(n)}x_n^{(n)}=y_{n+1}^{(n)}a_{n+1}x_{n+1}^{(n+1)}b_{n+1}x_n^{(n)}.
	\]
	If we let $u_n=x_n^{(n)}b_n$, it follows that $u_n=(y_{n+1}^{(n)}a_{n+1})u_{n+1}u_n$, whence $(u_n)\in \preS(R)$. By construction we have $z=[(z_n)]=[(u_n)]\in \SR(R)$, as was to be shown.
\end{proof}

%=========================================================
% \subsection{\texorpdfstring{The semigroup $\CP (R)$. The unital case.}{The semigroup CP(R). The unital case.}}
%\label{subsec:CPunitalcase}
We now proceed to introduce a semigroup steming directly from the class of countably generated projective $R$-modules, although with a new equivalence relation inspired by the construction in \cite{Coward2008}.

\begin{parag}[The semigroup $\CP(R)$ for a unital ring $R$]
	\label{def:PprecsimQ}
	Let $R$ be a unital ring. Let us denote by $\preCP (R)$ the class of all countably generated projective right $R$-modules. The first natural relation between (countably generated) projective modules is given by isomorphism. This yields the semigroup $\V^*(R)$ of isomorphism classes of countably generated projective modules, with addition given by direct sum. This semigroup has been successfully considered in \cite{HPCrelle} and \cite{Herbera2014}. Below, we weaken the above relation to another relation $\sim$ compatible with direct sum, thereby constructing an abelian semigroup $\CP(R)$. Thus, in particular, there is a natural surjective semigroup homomorphism $\Phi_R\colon\V^*(R)\to\CP(R)$, which will be an isomorphism in some cases, but not always; see \autoref{sec:CompEl}.
	
	Given $P,Q\in \preCP(R)$, we will write $P\precsim Q$ if and only if, for every finitely generated submodule $X$ of $P$, there exists a factorization of the inclusion of $X$ in $P$ by $Q$, that is, there are module morphisms $\phi\colon X\to Q$ and $\phi\colon Q\to P$ such that $\psi\circ\phi=\id_X$. Namely, the diagram below is commutative:
	
	\[
	\xymatrix{X\ar@{->}^{\phi}[r]\ar@/_1pc/[rr]_{\id_X} &Q\ar@{->}^{\psi}[r]&P}.
	\]
	
	We define the partially ordered set $\CP (R)$ to be
	\[
	\CP(R):=\preCP(R)/{\sim},
	\]
	where $\sim$ is the antisymmetrization of $\precsim$. Given an element $P$ in $\preCP (R)$, we will denote its equivalence class by $[P]$. For modules $P,Q\in \preCP(R)$, we define $[P]+[Q]=[P\oplus Q]$.
\end{parag}

%=========================================================
\begin{lemma}
	The relation $\precsim$ is reflexive, transitive, and compatible with the direct sum of projective modules. Therefore, $\CP(R)$ is a commutative semigroup.
\end{lemma}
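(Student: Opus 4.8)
The plan is to verify the three properties of $\precsim$ directly from \autoref{def:PprecsimQ} and then deduce the semigroup structure. For \emph{reflexivity}, given $P\in\preCP(R)$ and a finitely generated submodule $X\subseteq P$, I would factor the inclusion $X\hookrightarrow P$ through $P$ itself by taking $\phi\colon X\to P$ to be the inclusion and $\psi\colon P\to P$ the identity, so that $\psi\circ\phi=\id_X$ trivially; hence $P\precsim P$.

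For \emph{transitivity}, suppose $P\precsim Q$ and $Q\precsim S$, and let $X\subseteq P$ be finitely generated. By the first relation there are maps $\phi\colon X\to Q$ and $\psi\colon Q\to P$ with $\psi\circ\phi=\id_X$. The key point is that $\phi(X)$ is a finitely generated submodule of $Q$ (being the image of a finitely generated module), so I may apply the second relation to $X':=\phi(X)$ to obtain maps $\phi'\colon X'\to S$ and $\psi'\colon S\to Q$ with $\psi'\circ\phi'=\id_{X'}$. Composing, I set $\Phi:=\phi'\circ\phi\colon X\to S$ and $\Psi:=\psi\circ\psi'\colon S\to P$, and then compute $\Psi\circ\Phi=\psi\circ\psi'\circ\phi'\circ\phi=\psi\circ\id_{X'}\circ\phi=\psi\circ\phi=\id_X$, where I use that $\phi$ maps $X$ into $X'$ so that $\id_{X'}$ acts as the identity on $\phi(X)$. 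This gives $P\precsim S$.

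For \emph{compatibility with direct sums}, I would show that $P\precsim P'$ and $Q\precsim Q'$ imply $P\oplus Q\precsim P'\oplus Q'$. Here the main technical obstacle arises: an arbitrary finitely generated submodule $X\subseteq P\oplus Q$ need not split as a direct sum of submodules of $P$ and $Q$, so I cannot simply treat the two coordinates separately. The clean fix is to enlarge $X$: the projections of $X$ onto $P$ and $Q$ generate finitely generated submodules $X_P\subseteq P$ and $X_Q\subseteq Q$ with $X\subseteq X_P\oplus X_Q$. Factoring the inclusions $X_P\hookrightarrow P$ through $P'$ and $X_Q\hookrightarrow Q$ through $Q'$ yields maps whose direct sums $\phi_P\oplus\phi_Q\colon X_P\oplus X_Q\to P'\oplus Q'$ and $\psi_P\oplus\psi_Q\colon P'\oplus Q'\to P\oplus Q$ satisfy $(\psi_P\oplus\psi_Q)\circ(\phi_P\oplus\phi_Q)=\id_{X_P\oplus X_Q}$; restricting the first map to $X$ and keeping the second gives a factorization of the inclusion of $X$ through $P'\oplus Q'$, since the identity on $X_P\oplus X_Q$ restricts to the identity on $X$.

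Finally, having established that $\precsim$ is a reflexive and transitive relation (a preorder) compatible with $\oplus$, its antisymmetrization $\sim$ is an equivalence relation, and the quotient $\CP(R)=\preCP(R)/{\sim}$ inherits a well-defined, associative, and commutative addition $[P]+[Q]=[P\oplus Q]$ from the direct sum on $\preCP(R)$, with identity $[0]$; commutativity and associativity follow from the corresponding isomorphisms $P\oplus Q\cong Q\oplus P$ and $(P\oplus Q)\oplus S\cong P\oplus(Q\oplus S)$, which are in particular $\sim$-equivalences. Thus $\CP(R)$ is a commutative semigroup, as claimed.
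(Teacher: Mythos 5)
Your proof is correct and follows essentially the same route as the paper's: the same composition argument for transitivity (using that $\phi(X)$ is finitely generated), and the same enlargement $X\subseteq X_P\oplus X_Q$ to handle direct sums, with the factorization maps added componentwise. The concluding remarks on the semigroup structure are also the intended ones.
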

\begin{proof}
	That $\precsim$ is reflexive is trivial to verify. Let us show that it s transitive.
	% 
	% , let $P\in \preCP(R)$ and let $X\subseteq P$ be finitely generated. Then, we know by the dual basis property that there exists $\psi\colon P\rightarrow P$ such that $\psi_{|_X}=\id_X$; hence, the property follows.
	
	To this end, let $P_1$, $P_2$, $P_3\in\preCP(R)$ and assume that $P_1\precsim P_2$ and $P_2\precsim P_3$. Let $X\subseteq P_1$ be a finitely generated submodule. Then, by definition of $\precsim$, there is a commutative diagram
	\[
	\xymatrix{X\ar@{->}^{\phi_1}[r]\ar@/_1pc/[rr]_{\id_X} &P_2\ar@{->}^{\psi_1}[r]&P_1}.
	\]
	Note that $\phi_1(X)$ is a finitely generated submodule of $P_2$. Thus, since $P_2\precsim P_3$, there is a commutative diagram
	\[
	\xymatrix{\phi_1(X)\ar@{->}^{\quad\phi_2}[r]\ar@/_1pc/[rr]_{\id_{\phi_1(X)}} &P_3\ar@{->}^{\psi_2}[r]&P_2}
	\]
	Combining both diagrams, we define $\phi_3=\phi_2\circ \phi_1$ and $\psi_3= \psi_1\circ\psi_2$ to obtain:
	\[
	\xymatrix{X\ar@{->}^{\phi_1}[r]\ar@/_1pc/[rr]_{\phi_3} &\phi_1(X)\ar@{->}^{\phi_2}[r]&P_3\ar@{->}^{\psi_2}[r]\ar@/_1pc/[rr]_{\psi_3} &P_2\ar@{->}^{\psi_1}[r]&P_1},
	\]
	which satisfies $\psi_3\circ\phi_3=\psi_1\circ\psi_2\circ \phi_2\circ \phi_1=\psi_1\circ\id_{\phi_1(X)}\circ\phi_1=\psi_1\circ\phi_1=\id_X$.
	
	To show that $\precsim$ is compatible with the direct sum of modules, let $P$, $Q$, $P'$, $Q'\in\preCP(R)$ and suppose that $P\precsim P'$, $Q\precsim Q'$. Let $X\subseteq P\oplus Q$ be a finitely generated submodule. Then, there exist finitely generated submodules $X_{P}\subseteq P$ and $X_{Q}\subseteq Q$ such that $X\subseteq X_{P}\oplus X_{Q}$. By assumption, we have module maps $\phi_1\colon X_P\to P'$, $\psi_1\colon P'\to P$, $\phi_2\colon X_Q\to Q'$, $\psi_2\colon Q'\to Q$, and commutative diagrams
	\[
	\xymatrix{X_{P}\ar@{->}^{\phi_1}[r]\ar@/_1pc/[rr]_{\id_{X_{P}}}&P'\ar@{->}^{\psi_1}[r]&P & &
		X_{Q}\ar@{->}^{\phi_2}[r]\ar@/_1pc/[rr]_{\id_{X_{Q}}}&Q'\ar@{->}^{\psi_2}[r]&Q },
	\]
	which yields the following commutative diagram
	\[
	\xymatrix{X\subseteq X_{P}\oplus X_{Q}\ar@{->}^{\,\,\,\quad\phi_1\oplus \phi_2}[r]\ar@/_1pc/[rr]_{\id_{X_{P}\oplus X_{Q}}}& P'\oplus Q'\ar@{->}^{\psi_1\oplus \psi_2}[r]& P\oplus Q}.\qedhere
	\]
\end{proof}

%=========================================================
\begin{remark}
	Given a finitely generated projective module $F$ and a countably generated projective module $P$, it follows from our definition that $F\precsim P$ if and only if $F$ is isomorphic to a direct summand of $P$.
\end{remark}

%====================================================================================
We are now going to show that $\CP(R)$ is order-isomorphic to $\SR(R)$. Instrumental ingredients in our proof will be the facts, proved in \cite[Lemma 4.1]{Puninski2007}, that every countably generated projective $R$-module $P$ over a unital ring $R$ is isomorphic to a direct limit of the form
\[\xymatrix{R^{n_0}\ar@{->}[r]^{x_0\cdot}&R^{n_1}\ar@{->}[r]^{x_1\cdot}&\ldots\ar@{->}[r] & P,}\]
for some sequence of positive integers ${n_i}$, where for each $x_i\in M_{n_{i+1}\times n_i}(R)$ there exists $y_{i+1}\in M_{n_{i+1}\times n_{i+2}}(R)$ such that $y_{i+1}x_{i+1}x_i=x_i$, and that, conversely, any such direct limit is always projective. We will give below independent proofs of these facts, since our arguments offer additional information that will be useful later on.

For any ring $R$, unital or not, we will denote by $\mathrm{FCM} (R)$ the ring of those $\N\times \N$ matrices $A$ with coefficients in $R$ such that each column of $A$ has only a finite number of nonzero entries. We refer to $\mathrm{FCM}(R)$ as the ring of \emph{finite-column matrices} over $R$. When $R$ is unital this ring can be identified with the ring $\mathrm{End}_R(R^{(\N)})$ of $R$-module endomorphisms of the free $R$-module $R^{(\N)}$.

\begin{lemma}
	\label{lem:projective-as-limit-of-free}
	Let $R$ be a unital ring, and let $P\in \preCP (R)$. Then $P$ is isomorphic to a projective module of the form $\varinjlim _i (R^{k_i}, Z_i\cdot )$, where $Z_i\in M_{k_{i+1}\times k_i}(R^+)$ for an increasing sequence of positive integers $(k_i)$, and moreover $Z_{i+1}Z_i =  \begin{pmatrix} Z_i \\ 0_{(k_{i+2}-k_{i+1})\times k_i}\end{pmatrix}$ for all $i\in \N$. In particular, $P$ can be written in the form $P\cong \varinjlim_i (R^{n_i},x_i\cdot)$ for some sequence of positive integers ${n_i}$, where for each $x_i\in M_{n_{i+1}\times n_i}(R)$ there exists $y_{i+1}\in M_{n_{i+1}\times n_{i+2}}(R)$ such that $y_{i+1}x_{i+1}x_i=x_i$.
\end{lemma}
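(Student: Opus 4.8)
The plan is to realize $P$ as the image of an idempotent in the finite-column matrix ring and then extract the direct-limit decomposition by truncating that idempotent along a carefully chosen sequence of indices. First I would use that a countably generated projective module is a direct summand of the free module $R^{(\N)}$: choosing a countable generating set and splitting the resulting surjection $R^{(\N)}\to P$, I obtain an idempotent $e=e^2\in\mathrm{End}_R(R^{(\N)})=\mathrm{FCM}(R)$ with $P\cong eR^{(\N)}$. Since $e$ is a finite-column matrix, each of its columns has finite support, so I can inductively pick a strictly increasing sequence of positive integers $(k_i)$ with the property that columns $1,\dots,k_i$ of $e$ are supported in rows $1,\dots,k_{i+1}$.

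Writing $\iota_i\colon R^{k_i}\to R^{(\N)}$ for the inclusion of the first $k_i$ coordinates and $p_i$ for the projection onto them, I would set $Z_i:=p_{i+1}e\iota_i\in M_{k_{i+1}\times k_i}(R)$, the top-left corner of $e$. The support condition is precisely what makes $e\iota_i=\iota_{i+1}Z_i$ hold, i.e.\ the image under $e$ of the first $k_i$ basis vectors already lives in the first $k_{i+1}$ coordinates. From this identity together with idempotency of $e$, the key block relation falls out by direct computation: $Z_{i+1}Z_i=p_{i+2}e(\iota_{i+1}p_{i+1}e\iota_i)=p_{i+2}e(e\iota_i)=p_{i+2}e\iota_i$, and the support condition forces the lower block of $p_{i+2}e\iota_i$ to vanish, giving $Z_{i+1}Z_i=\bigl(\begin{smallmatrix}Z_i\\0\end{smallmatrix}\bigr)$.

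Next I would identify $\varinjlim_i(R^{k_i},Z_i\cdot)$ with $P$. The maps $\phi_i:=e\iota_i\colon R^{k_i}\to P$ are compatible with the transition maps, since $\phi_{i+1}\circ(Z_i\cdot)=e\iota_{i+1}Z_i=e\cdot e\iota_i=\phi_i$, so they induce a morphism $\Phi$ from the limit to $P$. Surjectivity is clear because the images of the $\phi_i$ exhaust the columns of $e$, which generate $P$. For injectivity I would use $e\iota_i=\iota_{i+1}Z_i$ once more: if $\phi_i(v)=e\iota_i v=0$ then $\iota_{i+1}(Z_iv)=0$, and since $\iota_{i+1}$ is injective this gives $Z_iv=0$, so $v$ already dies at the next stage of the limit. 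Finally, the weaker ``in particular'' assertion is immediate from the block relation: taking $x_i=Z_i$ and $y_{i+1}=(\,I_{k_{i+1}}\,|\,0\,)\in M_{k_{i+1}\times k_{i+2}}(R)$ yields $y_{i+1}x_{i+1}x_i=y_{i+1}\bigl(\begin{smallmatrix}Z_i\\0\end{smallmatrix}\bigr)=Z_i=x_i$.

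The main obstacle is the bookkeeping: one must choose the index sequence $(k_i)$ so that the support condition holds at every stage, since this single condition drives both the clean block identity and the injectivity argument. Everything else is formal once $P$ is presented as $eR^{(\N)}$; the content of the lemma, beyond Puninski's statement, is exactly that the truncations of a \emph{single} idempotent produce the sharp relation $Z_{i+1}Z_i=\bigl(\begin{smallmatrix}Z_i\\0\end{smallmatrix}\bigr)$, which is stronger than $y_{i+1}x_{i+1}x_i=x_i$ and is what will be exploited later. (Regarding $R^+$: since $R$ is unital the entries of the $Z_i$ already lie in $R\subseteq R^+$, so one may simply read $R^+=R$ here.)
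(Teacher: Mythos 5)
Your proof is correct and follows essentially the same route as the paper's: both realize $P$ as $E(R^{(\N)})$ for an idempotent $E\in\mathrm{FCM}(R)$, choose the indices $(k_i)$ so that $E(R^{k_i})\subseteq R^{k_{i+1}}$, and take the $Z_i$ to be the corresponding upper-left corners of $E$, from which the block relation and the limit identification follow. Your write-up merely makes explicit the verification of $Z_{i+1}Z_i=\bigl(\begin{smallmatrix}Z_i\\0\end{smallmatrix}\bigr)$ and of the isomorphism $\varinjlim_i(R^{k_i},Z_i\cdot)\cong P$, which the paper leaves as a one-line assertion.
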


\begin{proof}
	We can assume that $P =  E(R^{(\N)})$ for an idempotent $E=(e_{ij})\in \mathrm{FCM}(R)$. For each $n\ge 1$, we identify $R^n$ with $R^n \times \{0\}\times \{0\}\times \cdots $ in $R^{(\N)}$. Take an arbitrary positive integer $k_0$. Let $k_1> k_0$ be an integer such that $e_{k,l}=0$ for all $(k,l)$ such that $k>k_1$ and $l\le k_0$. In particular, one has $E(R^{k_0})\subseteq R^{k_1}$. Proceeding inductively we may find an increasing sequence of positive integers $(k_i)$ such that $e_{k,l}= 0$ for all $(k,l)$ such that $k>k_{i+1}$ and $l<k_i$. Then we have $E(R^{k_i})\subseteq R^{k_{i+1}}$ for all $i\in \N$. Let $Z_i$ be the $k_{i+1}\times k_i$ upper left corner of $E$. Thus, we get $Z_i\in M_{k_{i+1}\times k_i} (R)$ and $Z_{i+1}Z_i = \begin{pmatrix} Z_i \\ 0_{(k_{i+2}-k_{i+1})\times k_i}\end{pmatrix}$ for all $i\in \N$. Moreover $P = \bigcup_{i=0}^{\infty} Z_i R^{k_i}$ and $P\cong \varinjlim (R^{k_i}, Z_i\cdot)$, as claimed.
\end{proof}

\begin{parag}[Splittings]
	\label{par:limit-is-projective}
	Let $(n_i)$ be a sequence of positive integers, and let $(x_i)$ be a sequence, with $x_i\in M_{n_{i+1}\times n_i}(R)$, such that there exists $y_{i+1}\in M_{n_{i+1}\times n_{i+2}}(R)$ satisfying $y_{i+1}x_{i+1}x_i=x_i$. We show that $P:= \varinjlim_i (R^{n_i},x_i\cdot)$ is a countably generated projective $R$-module by exhibiting a concrete splitting of $P$ into $R^{(\N)}$. 
	
	Let $\phi_{i}\colon R^{n_{i}}\rightarrow P$ be the canonical morphisms into the direct limit, and denote by $P_{i}\subseteq P$ the image of $\phi_{i}$. Note that $P_{i}$ is a finitely generated submodule of $P$, with generators $z_{i}^{j}:=\phi_{i}(e_{j})$.
	Setting ${\bf z}_i=(z_i^1, \ldots, z_i^{n_i})$, which is a row matrix with coefficients in $P$, it follows that $P$ is described by the generators $(z^j_i)_{i,j}$, subject to the relations ${\bf z}_{i+1}x_i={\bf z}_i$ for all $i\in \N$.
	
	Further, note that for every pair $j\geq i+1$ the following diagram is commutative
	\begin{equation*}
		\xymatrixcolsep{10pc}
		\xymatrixrowsep{2pc}
		\xymatrix{
			R^{n_{j}} \ar[rd]^-{y_{i+1}\ldots y_{j}x_{j}\cdot} \ar[d]_-{x_{j}\cdot } &\\
			R^{n_{j+1}} \ar[r]_-{y_{i+1}\ldots y_{j+1}x_{j+1}\cdot } & R^{n_{i+1}}
		}
	\end{equation*}
	
	In particular, for every fixed $i$ the previous diagrams induce a morphism $g_{i}\colon P\rightarrow R^{n_{i+1}}$. Restricting to each component, we define $g_{i}^{j}:=\pi_{j}\circ g_{i}\colon P\rightarrow R$, for $j\in\{1,\ldots, n_{i+1}\}$, where $\pi_j\colon R^{n_{i+1}}\to R$ is the projection onto the $j$-th component.
	
\end{parag}

%==============================================================================

\begin{lemma}
	\label{lem:splitting} Following the above notation, a concrete splitting of $P$ into $R^{(\N)}$ is given by the formulas
	$$\pi \colon R^{(\N)}= R^{n_1}\oplus R^{n_2}\oplus \cdots \to P,\quad \iota \colon P  \to R^{(\N)}= R^{n_1}\oplus R^{n_2}\oplus \cdots ,$$
	where 	$$\pi (a_1,a_2,\dots ) = \sum_{i=1}^{\infty} \phi _i (a_i),\quad \iota (x) = (g_0(x), g_1(x)-x_1g_0(x), g_2(x)-x_2g_1(x),\dots )$$
	for $a_i\in R^{n_i}$ and $x\in P$. Then one has $\pi \circ \iota= \id_P$. In particular, $P$ is a countably generated projective right $R$-module.   
\end{lemma}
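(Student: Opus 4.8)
The plan is to verify directly that $\pi$ and $\iota$ compose to the identity on the correct side, namely $\pi\circ\iota=\id_P$; since both $\pi$ and $\iota$ are plainly $R$-module maps, this exhibits $\iota$ as a split monomorphism and hence $P$ as a direct summand of the free module $R^{(\N)}$, which gives countable generation and projectivity at once.

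First I would record the two identities that drive the computation. From the direct limit one has $\phi_{i+1}\circ(x_i\cdot)=\phi_i$, that is $\phi_{i+1}(x_id)=\phi_i(d)$ for $d\in R^{n_i}$ (this is exactly the relation ${\bf z}_{i+1}x_i={\bf z}_i$). From the Splittings paragraph the maps $g_i$ satisfy $g_i\circ\phi_j=(y_{i+1}\cdots y_jx_j)\cdot$ for all $j\ge i+1$, so in particular $g_{i-1}\circ\phi_i=(y_ix_i)\cdot$. Next I would carry out a telescoping computation on the partial sums of $\pi(\iota(x))$. Writing the components of $\iota(x)$ as $c_1=g_0(x)$ and $c_i=g_{i-1}(x)-x_{i-1}g_{i-2}(x)$ for $i\ge 2$, and setting $T_i:=\phi_i(g_{i-1}(x))$, the first identity gives $\phi_i(x_{i-1}g_{i-2}(x))=\phi_{i-1}(g_{i-2}(x))=T_{i-1}$, so that $\phi_i(c_i)=T_i-T_{i-1}$. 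The partial sums then collapse to
\[
\sum_{i=1}^{N}\phi_i(c_i)=T_N=\phi_N\bigl(g_{N-1}(x)\bigr).
\]
Hence it suffices to prove that the sequence $(T_N)$ is eventually constant equal to $x$; this simultaneously shows that $\iota(x)$ has finite support (so $\iota$ really maps into $R^{(\N)}$) and that $\pi(\iota(x))=x$.

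For the decisive final step I would fix $x$, write $x=\phi_m(a)$ for some $m$ and $a\in R^{n_m}$ (possible since $P=\bigcup_m\operatorname{im}\phi_m$), and compute $T_N$ for $N>m$. Using $x=\phi_N(T_m^Na)$ with $T_m^N:=x_{N-1}\cdots x_m$ together with $g_{N-1}\circ\phi_N=(y_Nx_N)\cdot$, one obtains
\[
T_N-x=\phi_N\bigl((y_Nx_N-I)\,T_m^Na\bigr).
\]
Factoring $T_m^N=x_{N-1}T_m^{N-1}$ and applying the defining relation $y_Nx_Nx_{N-1}=x_{N-1}$ makes the right-hand side vanish, so $T_N=x$ for all $N>m$, and in particular $c_i=T_i-T_{i-1}=0$ for $i\ge m+2$. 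I expect this stabilization to be the main obstacle: the telescoping itself is formal, but the entire argument hinges on recognizing that, after peeling off the outermost transition matrix, the error term $(y_Nx_N-I)\,T_m^Na$ is annihilated \emph{precisely} by the hypothesis $y_{i+1}x_{i+1}x_i=x_i$. Once this is in place, $\iota(x)$ has finite support and $\pi(\iota(x))=T_{m+1}=x$, so $\pi\circ\iota=\id_P$ and $P$ is a countably generated projective right $R$-module.
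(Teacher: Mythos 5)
Your proof is correct and takes essentially the same route as the paper: the paper's whole argument is the identity $\phi_{N}\circ g_{N-1}|_{P_{N-1}}=\id_{P_{N-1}}$ (your statement that $T_N=x$ for $N>m$), obtained exactly as you obtain it from the relation $y_Nx_Nx_{N-1}=x_{N-1}$ after rewriting $x$ through $\phi_N$, with the telescoping of $\pi\circ\iota$ left as ``easily checked'', which you spell out. Only one line needs repair: $c_i$ lives in $R^{n_i}$ while $T_i-T_{i-1}$ lives in $P$, so ``$c_i=T_i-T_{i-1}=0$'' should read ``$\phi_i(c_i)=T_i-T_{i-1}$'', and the finite-support claim $c_i=0$ for $i\ge m+2$ instead follows from the stronger fact your computation actually yields, namely $g_{N-1}(x)=T_m^{N}a$ exactly in $R^{n_N}$ for $N>m$, whence $c_i=T_m^{i}a-x_{i-1}T_m^{i-1}a=0$.
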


\begin{proof}
	Suppose that $x\in P_i$, and write $x= \phi_i (a)$ for $a\in R^{n_i}$. Then we have
	\begin{align*}
		\phi_{i+1} (g_i(x)) & = \phi _{i+1}(g_i (\phi_i(a))) = \phi_{i+1}(g_i(\phi_{i+1}(x_ia))) \\  
		&  = \phi_{i+1}(y_{i+1}x_{i+1}x_ia) = \phi_{i+1}(x_ia)  \\ &  = \phi_i(a) = x.
	\end{align*}
	Hence $(\phi_{i+1}\circ g_i)|_{P_i} = \id_{P_i}$, or equivalently $\phi_{i+1}\circ g_i \circ \phi_i =  \phi_i$. Using this, the identity $\pi \circ \iota = \id_P$ is easily checked. 
\end{proof}

%=========================================================
\begin{theorem}\label{thm_CuR_iso_WR}
	Let $R$ be a unital ring. Then
	\[
	\CP(R)\cong \SR (R).
	\]
\end{theorem}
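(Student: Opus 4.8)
The plan is to construct mutually inverse (as order-morphisms) maps between $\CP(R)$ and $\SR(R)$ using the explicit direct-limit decomposition supplied by \autoref{lem:projective-as-limit-of-free}. Given a class $[P]\in\CP(R)$, write $P\cong\varinjlim_i(R^{n_i},x_i\cdot)$ with $y_{i+1}x_{i+1}x_i=x_i$; by the defining condition of $\preS(R)$ the sequence $(x_i)$ lies in $\preS(R)$, so we may set $\Theta([P])=[(x_i)]\in\SR(R)$. In the reverse direction, given $[(x_n)]\in\SR(R)$, the splitting construction of \autoref{par:limit-is-projective} and \autoref{lem:splitting} shows that $P:=\varinjlim_i(R^{n_i},x_i\cdot)$ is a genuine countably generated projective module, so we set $\Psi([(x_n)])=[P]$. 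The bulk of the work is to verify that each assignment is well-defined (independent of the chosen decomposition/representative) and that $\Theta$ and $\Psi$ are mutually inverse order-isomorphisms of monoids.

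First I would establish the order-theoretic dictionary: that $P\precsim Q$ in $\preCP(R)$ translates exactly into $(x_n)\precsim(w_m)$ in $T(R)$ for associated finite matricial representatives. The key is the finitely-generated-submodule condition in \autoref{def:PprecsimQ}. Each finitely generated $X\subseteq P$ is contained in some image $P_i=\phi_i(R^{n_i})$, and the factorization $\id_X=\psi\circ\phi$ through $Q$ should be shown equivalent, after passing to matrix presentations, to a relation $x_i\precsim_1 w_m$ for suitable $m$. Concretely, a module map $R^{n_i}\to Q=\varinjlim(R^{m_j},w_j\cdot)$ factors through some $R^{m_j}$, and the requirement that the composite back into $P$ restricts to the canonical inclusion $P_i\hookrightarrow P$ should produce matrices $r,t$ with $x_i=r\,w_j\,t$ (using the relations $y_{i+1}x_{i+1}x_i=x_i$ to absorb the necessary idempotent corrections). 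Running this argument in both directions gives that $\precsim$ on modules corresponds to $\precsim$ on sequences, which simultaneously shows both $\Theta$ and $\Psi$ are well-defined on $\sim$-classes and order-preserving.

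Next I would check compatibility with addition, which is essentially formal: under the decomposition, $P\oplus Q$ is presented by the sequence $(x_i\oplus w_i)$ (after aligning indices), matching the definition $w+w'=[(u_n\oplus u_n')]$ of the sum in $\SR(R)$ from \autoref{subsec:WR}. That $\Theta$ and $\Psi$ are mutually inverse then follows because $\Psi\circ\Theta([P])$ returns the module $\varinjlim(R^{n_i},x_i\cdot)$, which is isomorphic to $P$ by the very decomposition used to define $\Theta$, while $\Theta\circ\Psi([(x_n)])$ returns a decomposition of $\varinjlim(R^{n_i},x_i\cdot)$ that must be $\sim$-equivalent to the original $(x_n)$; the order-dictionary of the previous paragraph forces this equivalence even if the recovered sequence is not literally equal.

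The main obstacle I anticipate is the careful back-and-forth in the order-correspondence, specifically extracting, from an \emph{abstract} factorization of the inclusion of a finitely generated submodule through $Q$, the concrete matrix identity $x_i=r\,w_j\,t$ witnessing $x_i\precsim_1 w_j$, and conversely building module morphisms from such matrix data so that the triangle $\psi\circ\phi=\id_X$ genuinely commutes. The delicate point is that the factorizing maps $\phi\colon X\to Q$ and $\psi\colon Q\to P$ need not respect the chosen bases, so one must exploit the splitting maps $g_i,\iota,\pi$ of \autoref{lem:splitting} and the relation $y_{i+1}x_{i+1}x_i=x_i$ to convert arbitrary module homomorphisms into honest rectangular matrices over $R$ and to verify the compatibility $x_i=v_{i+1}x_{i+1}x_i$ required for $(x_i)$ to define an element of $\SR(R)$. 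Once this translation between the module language and the matrix language is pinned down, the remaining verifications are routine.
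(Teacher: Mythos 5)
Your proposal is correct and follows essentially the same route as the paper: decompose $P$ as $\varinjlim(R^{n_i},x_i\cdot)$ via \autoref{lem:projective-as-limit-of-free}, use the splitting of \autoref{lem:splitting} for surjectivity, and reduce the order-dictionary to the finitely generated stages $P_i$, converting the abstract factorization $\psi\circ\phi=\id_{P_i}$ into a matrix identity $x_i=r\,x'_j\,t$ by means of the maps $g_i,\phi_i$ and the relations $y_{i+1}x_{i+1}x_i=x_i$. The paper carries out exactly the computation you flag as the delicate point (using that $\phi(P_i)$ and $\psi(Q_{j+2})$ land in finite stages), so your plan matches its proof in all essentials.
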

\begin{proof}
	Given a countably generated projective module $P$, it follows from  \autoref{lem:projective-as-limit-of-free} (see also \cite[Lemma 4.1(2)]{Puninski2007}) that $P\cong \lim (R^{n_i},x_i\cdot )$, where $x_i\in M_{n_{i+1}\times n_i}(R)$ are such that, for each $i$ there exists $y_{i+1}\in M_{n_{i+1}\times n_{i+2}}(R)$ such that $x_i = y_{i+1}x_{i+1}x_i$. In particular, the sequence $(x_i)$ determines an element in $\preS (R)$, through the usual identification of $x_i$ with the matrix $\text{diag}(x_{i},0,0,\ldots )$ in $M_{\infty}(R)$.
	
	We will show that the map $[P]\mapsto [(x_i)]$ defines an isomorphism between $\CP (R)$ and $\SR (R)$.
	
	First, note that this map is surjective by \autoref{lem:splitting} (see also \cite[Lemma 4.1(1)]{Puninski2007}). Moreover, it is also additive by how addition is defined in both $\CP(R)$ and $\SR(R)$; see \ref{subsec:WR}, \ref{def:PprecsimQ}.
	
	Hence, we will conclude the proof showing that $P\precsim Q$ if and only if $(x_{i})\precsim (x'_{i})$, where $P\cong \lim (R^{n_i},x_i\cdot )$ and $Q \cong \lim (R^{n'_i},x'_i\cdot )$ are the corresponding representations as direct limits.
	
	To prove the forward implication, let $P$, $Q\in\preCP(R)$ and suppose that $P\precsim Q$. Write $P=\bigcup_{i=1}^\infty P_i$ and $Q=\bigcup^{\infty}_{i=1}Q_i$ as in \autoref{par:limit-is-projective}. Then, given $i$, there are module homomorphisms $\phi\colon P_i\to Q$ and $\psi\colon Q\to P$ such that the diagram
	\[
	\xymatrix{P_i\ar@{->}^{\phi}[r]\ar@/_1pc/[rr]_{\id_{P_i}} &Q\ar@{->}^{\psi}[r]&P}
	\]
	is commutative.
	
	Since $P_{i}$ is finitely generated, so is $\phi (P_{i})$. In particular, $\phi (P_{i})\subseteq Q_{j}$ for some $j$.  By the same reasoning, one has that $\psi (Q_{j+2})\subseteq P_{l}$ for some $l$.
	
	Using the definition of the maps $g_{i},\phi_{i}$ from \autoref{par:limit-is-projective}, one gets the following commutative diagram.
	
	\[\xymatrix{
		R^{n_i} \ar@{->>}[r]^{\phi_i}\ar@{->}[d]_{x_i\cdot}                     & P_i \ar@{_{(}->}[d] \ar@{->}[r]^{\phi} & Q_j \ar@{^{(}->}[d] & R^{n_j'}\ar@{->>}[l]_{\phi_j'}\ar@{->}[d]^{x_j'\cdot} \\
		R^{n_{i+1}} \ar@{->>}[r]^{\phi_{i+1}} \ar@{->}[d]_{x_{i+1}\cdot} & P_{i+1} \ar@{_{(}->}[dd] & Q_{j+1}\ar@{^{(}->}[dd] & R^{n_{j+1}'}\ar@{->}[dd]^{x_{j+1}'\cdot}\ar@{->>}[l]_{\phi_{j+1}'}\\
		\ldots \ar@{->}[d] & & &  \\
		R^{n_l} \ar@{->>}[r]^{\phi_l} & P_l \ar@{<-}[r]^{\psi} & Q_{j+2} & R^{n_{j+2}'} \ar@{->>}[l]_{\phi_{j+2}'} }  \]
	
	Now note that, given $r\in R^{n_{i}}$ and $r'\in R^{n'_{j}}$ such that $q:=\phi\phi_{i}(r)=\phi'_{j}(r')$, we have
	\begin{equation*}
		\begin{split}
			q &= \phi'_{j}(r')= \phi'_{j+1}(x'_{j}r')=\phi'_{j+1}(y'_{j+1}x'_{j+1}x'_{j}r')\\
			&=\phi'_{j+1}g'_{j}\phi'_{j+1}(x'_{j}r')=\phi'_{j+1}g'_{j}(q) =\phi'_{j+2}x'_{j+1}g'_{j}(q) ,
		\end{split}
	\end{equation*}
	where in the fourth step we have used that $g'_{j}\phi'_{j+1}=y'_{j+1}x'_{j+1}$.
	
	Thus, one gets that
	\begin{equation*}
		\begin{split}
			x_{l-1}\ldots x_{i}(r)&=y_{l}x_{l}x_{l-1}\ldots x_{i}(r)=g_{l-1}\phi_{l}(x_{l-1}\ldots x_{i}(r))\\
			&=g_{l-1}\phi_{l-1}(x_{l-2}\ldots x_{i}(r))=\ldots =g_{l-1}\phi_{i}(r)\\
			&=g_{l-1}\psi\phi\phi_{i}(r)=g_{l-1}\psi(q)=(g_{l-1}\psi\phi'_{j+2})x'_{j+1}(g'_{j}\phi\phi_{i})(r) .
		\end{split}
	\end{equation*}
	Since this holds for every $r\in R^{n_{i}}$, we can multiply by $y_{i+1}\ldots y_{l-1}$ to obtain
	\begin{equation*}
		x_{i}=(y_{i+1}\ldots y_{l-1}g_{l-1}\psi\phi'_{j+2})x'_{j+1}(g'_{j}\phi\phi_{i}).
	\end{equation*}
	It follows that $(x_{i})\precsim (x'_{i})$, as required.
	
	For the converse, let $(x_{i})\precsim (x'_{i})$. For any fixed $i$ we need to construct a commutative diagram
	\[
	\xymatrix{P_i\ar@{->}^{\phi}[r]\ar@/_1pc/[rr]_{\id_{P_i}} &Q\ar@{->}^{\psi}[r]&P_i}.
	\]
	
	We know that, for every fixed $i$, there exist $j\in\mathbb{N}$, $\alpha \in M_{n_j'\times n_{i+1}}(R)$ and $\beta \in M_{n_{i+2}\times n_{j+1}'}(R)$ such that $x_{i+1}=\beta x'_{j}\alpha$. Thus, one gets the following commutative diagram:
	
	\[\xymatrix{
		P_i \ar@{^{(}->}[d] & R^{n_i} \ar@{->>}[l]_{\phi_i}\ar@{->}[d]^{x_i\cdot} & & \\
		P_{i+1} \ar@{^{(}->}[d] & R^{n_{i+1}} \ar[r]^{\alpha \cdot } \ar@{->>}[l]_{\phi_{i+1}} \ar@{->}[d]^{x_{i+1}\cdot} & R^{n_j'}\ar@{->>}[r]^{\phi_j'}\ar@{->}[d]^{x_j'\cdot} & Q_j\ar@{^{(}->}[d] \\
		P_{i+2} & R^{n_{i+2}} \ar@{->>}[l]^{\phi_{i+2}}  & R^{n_{j+1}'} \ar[l]^{\beta \cdot} \ar@{->>}[r]^{\phi_{j+1}'}  & Q_{j+1} }  \]
	
	Define $\phi:=\phi_j'\alpha {g_{i}}_{|_{P_{i}}}$ and $\psi=\phi_{i+2}\beta g_{j}'$. We have
	\begin{equation*}
		\psi\phi=\phi_{i+2}\beta (g_{j}'\phi_j')\alpha {g_{i}}_{|_{P_{i}}}= \phi_{i+2}\beta x'_{j} \alpha {g_{i}}_{|_{P_{i}}}= \phi_{i+2} x_{i+1} {g_{i}}_{|_{P_{i}}}=\phi_{i+1}{g_{i}}_{|_{P_{i}}}=\id_{{P_{i}}},
	\end{equation*}
	as desired.
\end{proof}

\begin{parag}[The semigroup $\CP (R)$ for non-unital $R$]\label{para:CPRNonUni}
	Let $R$ be an arbitrary ring, and let $R^+= \Z \oplus R$ be the unitization of $R$. Observe that $R$ sits as a two-sided ideal of $R^+$. We will denote by Mod-$R^+$ the category of unital right $R^+$-modules, and by AMod-$R$ the category of arbitrary $R$-modules. Note that we have an isomorphism of categories AMod-$R \cong$ Mod-$R^+$ sending an arbitrary $R$-module $M$ to the unique unital $R^+$-module whose underlying additive group is $(M,+)$ and whose multiplication is given by $x(n,r)= nx+xr$ for $x\in M$, $n\in \Z$ and $r\in R$.
	
	Recall that, for a ring $R$, we denote by $\mathrm{FCM}(R)$ the ring of finite-column matrices over $R$. For an arbitrary ring $R$, we will denote by $\preCP (R)$ the class of all countably generated unital projective right $R^+$-modules $P$ such that
	$P=PR$. The class $\preCP (R)$ agrees with the previously defined class $\preCP (R)$ whenever $R$ is a unital ring.  
	Given such module $P$, there exists an idempotent $E\in \mathrm{FCM}(R^+)$ such that $E((R^+)^{(\N)})\cong P$. Since $P=PR$, it follows that $E\in \mathrm{FCM}(R)$ and $E((R^+)^{(\N)})= E(R^{(\N)})$. Conversely, given an idempotent $E\in \mathrm{FCM}(R)$, the unital $R^+$-module $P=E(R^{(\N)})= E((R^+)^{(\N)})$ is countably generated and projective, and $P=PR$. Moreover if $Q$ is also in $\preCP (R)$, then $P\cong Q$ if and only if $E$ and $F$ are Murray-von Neuman equivalent idempotents in $\mathrm{FCM} (R)$ (see \autoref{par:WR}). This extends the well-known relation  between isomorphism classes of finitely generated unital projective $R^+$-modules $P$ such that $P=PR$ and idempotent matrices in $M_{\infty}(R)$, see e.g. \cite[$\S$ 5.1]{Goodirectlim}.
	
	Observe that, with the above notation, we have that $\preCP (R)$ is a subclass of $\preCP (R^+)$. Moreover $\preCP(R)$ is closed in $\preCP (R^+)$ under direct summands and countable direct sums. We consider the relation $\precsim$ inherited from the relation $\precsim $ which we have defined in $\preCP (R^+)$, that is, for $P,Q\in \preCP (R)$, we set $P\precsim Q$ if and only if $P\precsim Q$ in $\preCP (R^+)$. We denote by $\CP (R)$ the monoid of equivalence classes of objects in $\preCP (R)$ with respect to the relation $\precsim$. Note that $\CP (R)$ order-embeds in $\CP (R^+)$. We further define $\V^*(R)$ as the monoid of isomorphism classes of modules from $\preCP (R)$. As in the case of unital rings, we have a canonical surjective homomorphism
	$\Phi_R \colon \V^*(R)\to \CP (R)$.
	
	It is also easily checked that $\SR (R)$ order-embeds in $\SR (R^+)$. We will show that the isomorphism $\psi \colon \CP (R^+)\to \SR(R^+)$, displayed in \autoref{thm_CuR_iso_WR}, restricts to an isomorphism from $\CP (R)$ onto $\SR (R)$. 
\end{parag}

In order to obtain this result, we find a concrete realization of the idempotent matrix $E\in \mathrm{FCM}(R)$ corresponding to a sequence $(x_i)$ in $\preS (R)$.

%==================================================
\begin{lemma}
	\label{lem:special-rep}
	Let $R$ be a ring, and let $(x_i)\in \preS (R)$, with $x_i \in M_{n_{i+1}\times n_i}(R)$, where all $n_i$'s are positive integers. Then the countably generated projective $R^+$-module $P=\varinjlim _i ((R^+)^{n_i},x_i\cdot)$ is isomorphic to a module of the form $Q= \bigcup_i z_i (R^{(\N)})$, where $z_i \in M_{\infty}(R)$ and $z_{i+1}z_i = z_i$ for all $i\in \N$. More precisely, there is a sequence of positive integers $(k_i)$, with $k_{i+1}> k_i$ for all $i\in \N$, such that each $z_i$ is represented by a matrix $Z_i\in M_{k_{i+1}\times k_i} (R)$ and
	$Z_{i+1}Z_i = \begin{pmatrix} Z_i \\ 0_{(k_{i+2}-k_{i+1})\times k_i}\end{pmatrix}$ for all $i\in \N$, so that $P\cong \varinjlim_i ((R^+)^{k_i}, Z_i\cdot )$.  
\end{lemma}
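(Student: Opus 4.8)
The plan is to reduce the statement to the unital machinery already in place, the only genuinely new ingredient being the passage from $R^+$-coefficients to $R$-coefficients. Concretely, I would first produce an idempotent $E\in\mathrm{FCM}(R)$ \emph{(not merely in $\mathrm{FCM}(R^+)$)} with $E(R^{(\N)})\cong P$, and then apply the corner-truncation argument of \autoref{lem:projective-as-limit-of-free} to $E$; since $E$ has entries in $R$, so will all of its corners $Z_i$. The hard point, and the only place where the hypothesis $(x_i)\in\preS(R)$ (with each $x_i$ having entries in $R$) is really used, is precisely guaranteeing that $E$ can be chosen in $\mathrm{FCM}(R)$.

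First I would check that $P=PR$, so that $P$ belongs to the class $\preCP(R)$ of \autoref{para:CPRNonUni}. Writing $\mathbf z_{i+1}x_i=\mathbf z_i$ for the defining relations of the canonical generators $z_i^j=\phi_i(e_j)$ of $P$ (see \autoref{par:limit-is-projective}), each $z_i^j=\sum_k z_{i+1}^k(x_i)_{kj}$ lies in $PR$ because $(x_i)_{kj}\in R$; as $PR$ is an $R^+$-submodule and the $z_i^j$ generate $P$, we get $P=PR$. By the discussion in \autoref{para:CPRNonUni} there is then an idempotent $E\in\mathrm{FCM}(R)$ with $E\big((R^+)^{(\N)}\big)=E(R^{(\N)})\cong P$. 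Equivalently, one may take $E=\iota\circ\pi$ for the concrete splitting of \autoref{lem:splitting}: since $\iota$ is assembled from the maps $g_i$, each of which is multiplication by a product of the $R$-matrices $x_i,y_i$, the columns of $E$ land in $R^{(\N)}$, so that $E\in\mathrm{FCM}(R)$ automatically.

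Next I would run the corner-truncation of \autoref{lem:projective-as-limit-of-free} on $E=(e_{kl})$: choose a strictly increasing sequence $(k_i)$ with $e_{kl}=0$ whenever $k>k_{i+1}$ and $l\le k_i$, and let $Z_i\in M_{k_{i+1}\times k_i}(R)$ be the upper-left $k_{i+1}\times k_i$ corner of $E$. Because $E$ has entries in $R$, so do the $Z_i$, and the identity $E^2=E$ gives, exactly as in that lemma, $Z_{i+1}Z_i=\begin{pmatrix} Z_i\\ 0_{(k_{i+2}-k_{i+1})\times k_i}\end{pmatrix}$; passing to the infinite matrices $z_i$ represented by $Z_i$, this reads $z_{i+1}z_i=z_i$. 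Since the first $k_i$ columns of $E$ are supported in the first $k_{i+1}$ rows, one has $E(R^{k_i})=Z_iR^{k_i}=z_i(R^{(\N)})$, whence $Q:=\bigcup_i z_i(R^{(\N)})=\bigcup_i E(R^{k_i})=E(R^{(\N)})\cong P$.

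It remains to identify $Q$ with the stated direct limit. The maps $\theta_i\colon (R^+)^{k_i}\to R^{(\N)}$, $a\mapsto z_i a$, are compatible with the transition maps $Z_i\cdot$ since $\theta_{i+1}(Z_i a)=z_{i+1}z_i a=z_i a=\theta_i(a)$, so they induce a map $\theta\colon \varinjlim_i\big((R^+)^{k_i},Z_i\cdot\big)\to R^{(\N)}$ with image $\bigcup_i z_i(R^{(\N)})=Q$. This $\theta$ is injective: every element of the colimit equals $\psi_i(a)$ for some stage $i$ and some $a$, where $\psi_i$ are the structure maps, and $\theta(\psi_i(a))=z_i a=0$ forces $Z_i a=0$, hence $\psi_i(a)=\psi_{i+1}(Z_i a)=0$. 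Thus $\theta$ is an isomorphism onto $Q$, and combined with $Q\cong P$ this yields $P\cong\varinjlim_i\big((R^+)^{k_i},Z_i\cdot\big)$, as required. I expect the only delicate step to be the first one, securing $E\in\mathrm{FCM}(R)$; once that is in hand, everything reduces to the bookkeeping already carried out in the unital setting.
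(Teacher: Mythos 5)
Your proof is correct and follows essentially the same route as the paper: both realize $P$ as the image of a column-finite idempotent $E$ with entries in $R$ (the paper takes $E=\iota\circ\pi$ from \autoref{lem:splitting} and writes out its columns explicitly) and then truncate corners as in \autoref{lem:projective-as-limit-of-free}. The only difference is presentational: the paper's explicit column formula makes the column-finiteness, the $R$-entries, and the concrete choice $k_i=n_1+\cdots+n_{i+1}$ visible at a glance, whereas you justify $E\in\mathrm{FCM}(R)$ abstractly via $P=PR$ and \autoref{para:CPRNonUni}.
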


\begin{proof}
	By \autoref{lem:splitting}, one can explicitly compute the idempotent matrix $E\in \text{End} (R^{(\N)})$ such that $E(R^{(\N)})\cong P$, where $P\cong  \lim ((R^+)^{n_i},x_i\cdot)$.
	
	Using the notation of \autoref{par:limit-is-projective}, the splitting found in \autoref{lem:splitting} gives the idempotent $E= \iota \circ \pi \in \text{End} (R^{(\N)})$. For $i\ge 1$, the column 
	$E_{i-1}$ of $E$ (with respect to the decomposition $(R^+)^{(\N)}= (R^+)^{n_1}\oplus (R^+)^{n_2}\oplus \cdots $), is given by
	$$E_{i-1} = \begin{pmatrix} y_1y_2\cdots y_ix_i \\
		y_2y_3\cdots y_ix_i -x_1y_1y_2\cdots y_ix_i\\
		y_3y_4\cdots y_ix_i - x_2y_2y_3\cdots y_ix_i\\
		\vdots \\
		y_ix_i - x_{i-1}y_{i-1}y_ix_i \\
		x_i-x_iy_ix_i\\
		{\mathbf 0}
	\end{pmatrix}.$$
	Note that the $i$-th column $E_i$ of $E$ has (at most) $i+1$ nonzero coefficients. Let $Z_{i} \in M_{(n_1+\cdots +n_{i+2})\times (n_1+\cdots + n_{i+1})}(R)$ be the matrix consisting of the upper left $(n_1+\cdots +n_{i+2})\times (n_1+\cdots + n_{i+1})$ corner of $E$. One has
	$$Z_{i+1}Z_i = \begin{pmatrix}
		Z_i \\ 0_{n_{i+3}\times (n_1+\cdots +n_{i+1})}
	\end{pmatrix}.$$
	Let $z_i\in M_{\infty}(R)$ be the infinite matrix represented by $Z_i$. Then $(z_i) \in \preS (R)$ and $z_{i+1}z_i = z_i$ for all $i\ge 1$. Moreover, we have
	$$P\cong E(R^{(\N)}) = \bigcup_{i=0}^{\infty} z_i ((R^+)^{(\N)}) \cong \varinjlim _i ((R^+)^{k_i}, Z_i\cdot),$$
	where $k_i = n_1+\cdots +n_{i+1}$ for all $i\ge 0$. 
\end{proof}

%==================================================

We can now obtain the following result, generalizing \autoref{thm_CuR_iso_WR}.

\begin{theorem}
	\label{thm:nonunitalSRCP}
	Let $R$ be a ring. Then there is an isomorphism
	$\SR (R) \cong \CP (R)$ such that the following diagram
	\[\xymatrix{
		\CP (R) \ar@{->}[r]^{\psi|_{\CP (R)}}\ar@{_{(}->}[d]  & \SR (R) \ar@{_{(}->}[d]  \\
		\CP (R^+)  \ar@{->}[r]^{\psi}  & \SR (R^+) } \]
	is commutative, 
	where $\psi \colon \CP (R^+)\to \SR (R^+)$ is the isomorphism defined in  \autoref{thm_CuR_iso_WR}.  
\end{theorem}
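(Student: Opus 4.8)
The plan is to show that the isomorphism $\psi\colon \CP(R^+)\to\SR(R^+)$ of \autoref{thm_CuR_iso_WR} carries the order-embedded copy of $\CP(R)$ exactly onto the order-embedded copy of $\SR(R)$; once the equality $\psi(\CP(R))=\SR(R)$ of subsets of $\SR(R^+)$ is established, the rest is formal. Indeed, $\psi$ is an order-isomorphism and both vertical maps in the square are order-embeddings, so the corestriction $\psi|_{\CP(R)}\colon\CP(R)\to\SR(R)$ is automatically an order-isomorphism onto its image, and the square commutes by the very definition of this corestriction. Thus the task reduces to proving the two inclusions $\psi(\CP(R))\subseteq\SR(R)$ and $\SR(R)\subseteq\psi(\CP(R))$.

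For the forward inclusion, take $[P]\in\CP(R)$. By the discussion in \autoref{para:CPRNonUni} we may assume $P=E(R^{(\N)})$ for an idempotent $E\in\mathrm{FCM}(R)$, with entries in $R$ and not merely in $R^+$. Applying \autoref{lem:projective-as-limit-of-free} to the unital ring $R^+$ and to this particular $E$, we realize $P\cong\varinjlim_i((R^+)^{k_i},Z_i\cdot)$, where each $Z_i$ is the upper-left $k_{i+1}\times k_i$ corner of $E$, hence lies in $M_{k_{i+1}\times k_i}(R)$, and where $Z_{i+1}Z_i=\begin{pmatrix}Z_i\\0\end{pmatrix}$. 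By the construction of $\psi$ this gives $\psi([P])=[(Z_i)]$, so it remains to verify that $(Z_i)\in\preS(R)$, i.e. that the witnesses can be chosen over $R$. Taking $y_{i+1}$ to be the upper-left $k_{i+1}\times k_{i+2}$ corner of $E$ and using $E^2=E$ together with the displayed relation, one checks $y_{i+1}Z_{i+1}Z_i=Z_i$ with $y_{i+1}\in M_{\infty}(R)$. Hence $\psi([P])=[(Z_i)]\in\SR(R)$.

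For the reverse inclusion, take a class $[(x_i)]\in\SR(R)$ represented by $(x_i)\in\preS(R)$ with $x_i\in M_{n_{i+1}\times n_i}(R)$. This is exactly the input of \autoref{lem:special-rep}, which produces matrices $z_i$ (equivalently $Z_i$) over $R$ assembling into an idempotent $E\in\mathrm{FCM}(R)$ that realizes $P:=\varinjlim_i((R^+)^{n_i},x_i\cdot)$ as $E(R^{(\N)})$. Consequently $P=PR$, so $P\in\preCP(R)$ and $[P]\in\CP(R)$; by \autoref{thm_CuR_iso_WR} we have $\psi([P])=[(x_i)]$, whence $[(x_i)]\in\psi(\CP(R))$.

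The only genuinely delicate point, and the main obstacle, is keeping all of the data over $R$ rather than over $R^+$. Membership of $P$ in $\preCP(R)$ is governed by the condition $P=PR$, which by \autoref{para:CPRNonUni} is equivalent to the associated finite-column idempotent having entries in $R$; and membership of a sequence in $\preS(R)$ requires not only the $x_i$ but also the witnesses $y_{i+1}$ to lie in $M_{\infty}(R)$. \autoref{lem:special-rep} is designed precisely to supply this $R$-valued idempotent for the reverse inclusion, while for the forward inclusion the witnesses are extracted directly from the corners of $E$. With both inclusions in hand, the injectivity of $\psi$ delivers the order-isomorphism $\CP(R)\cong\SR(R)$ together with the commutativity of the square.
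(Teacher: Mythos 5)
Your proof is correct and follows essentially the same route as the paper: reduce to the set equality $\psi(\CP(R))=\SR(R)$ inside $\SR(R^+)$, use \autoref{lem:projective-as-limit-of-free} applied to an idempotent $E\in\mathrm{FCM}(R)$ for the inclusion $\psi(\CP(R))\subseteq\SR(R)$, and use \autoref{lem:special-rep} for the reverse inclusion. The extra verification that the witnesses $y_{i+1}$ can be taken over $R$ is a welcome (and correct) elaboration of a point the paper leaves implicit; note it also follows immediately from $z_{i+1}z_i=z_i$ by taking $y_{i+1}=z_{i+1}$.
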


\begin{proof}
	We need to show that the restriction of the map $\psi \colon \CP (R^+)\to \SR (R^+)$ defined in the proof of \autoref{thm_CuR_iso_WR} sends $\CP (R)$ onto $\SR (R)$.

	Given $P\in \preCP (R)$, we can assume that $P =  E((R^+)^{(\N)})$ for an idempotent $E=(e_{ij})\in FCM(R)$.
	The procedure given in the proof of \autoref{lem:projective-as-limit-of-free} gives us an increasing sequence of positive integers $(k_i)$ and a sequence $(Z_i)$, with $Z_i\in M_{k_{i+1}\times k_i} (R)$ and $Z_{i+1}Z_i = \begin{pmatrix} Z_i \\ 0_{(k_{i+2}-k_{i+1})\times k_i}\end{pmatrix}$ for all $i\in \N$, such that $P \cong \varinjlim ((R^+)^{k_i}, Z_i\cdot)$. By the definition of the map $\psi$  we have that $\psi ([P]) = [(z_i)_i] \in \SR (R)$, where $z_i\in M_{\infty}(R)$ are represented by $Z_i$ for all $i\in \N$. 
	Hence $\psi ([P]) \in \SR (R)$, as desired. 
	
	Now if $w\in \SR (R)$, it follows from \autoref{lem:special-rep} that $\psi^{-1}(w)$ can be represented by 
	a countably generated unital projective right $R^+$-module of the form $P= E(R^{(\N)})$, where $E\in FCM(R)$.
	In particular, $P=PR$, so that $P\in \preCP (R)$, and hence $w= \psi (\psi ^{-1}(w))= \psi ([P]) \in \psi (\CP (R))$. This shows that $\psi (\CP (R)) = \SR (R)$, completing the proof.          
\end{proof}
%==================================================
The following corollary is a useful consequence of the above proof.

\begin{corollary}
	\label{cor:special-rep}
	Let $R$ be a ring. Then every element in $\SR (R)$ has a representative of the form $(z_i)$, where $z_i\in M_{\infty}(R)$ satisfy $z_{i+1}z_i = z_i$ for all $i\in \N$. More precisely there is a non-decreasing sequence of positive integers $(k_i)$ such that each $z_i$ is represented by a matrix $Z_i\in M_{k_{i+1}\times k_i} (R)$ and
	$Z_{i+1}Z_i = \begin{pmatrix} Z_i \\ 0_{(k_{i+2}-k_{i+1})\times k_i}\end{pmatrix}$ for all $i\in \N$.  
\end{corollary}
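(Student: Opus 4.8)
The plan is to read the statement directly off the proof of \autoref{thm:nonunitalSRCP}: the representative constructed there already has the required shape, so the corollary only needs to record its properties. First I would fix an arbitrary element $w\in \SR(R)$ and invoke the isomorphism $\psi\colon \CP(R^+)\to \SR(R^+)$ of \autoref{thm_CuR_iso_WR}, which by \autoref{thm:nonunitalSRCP} restricts to an isomorphism $\CP(R)\to \SR(R)$. This lets me write $\psi^{-1}(w)=[P]$ for some $P\in \preCP(R)$. The condition $P=PR$ built into $\preCP(R)$ is what guarantees that the defining idempotent can be taken in $\mathrm{FCM}(R)$, i.e. $P=E(R^{(\N)})$ with $E\in \mathrm{FCM}(R)$; this is precisely the point ensuring that the matrices extracted from $E$ below have entries in $R$ rather than merely in $R^+$, so that the resulting $z_i$ lie in $M_\infty(R)$ as the statement demands.

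Next I would feed this idempotent $E$ into the construction of \autoref{lem:projective-as-limit-of-free} (equivalently \autoref{lem:special-rep}), which produces a strictly increasing (hence non-decreasing) sequence of positive integers $(k_i)$ and matrices $Z_i\in M_{k_{i+1}\times k_i}(R)$ with $Z_{i+1}Z_i = \begin{pmatrix} Z_i \\ 0_{(k_{i+2}-k_{i+1})\times k_i}\end{pmatrix}$ and $P\cong \varinjlim_i((R^+)^{k_i},Z_i\cdot)$. Letting $z_i\in M_\infty(R)$ be the infinite matrix represented by $Z_i$, I would then check that this finite relation is exactly the infinite-matrix relation $z_{i+1}z_i=z_i$: the matrix $\begin{pmatrix} Z_i \\ 0_{(k_{i+2}-k_{i+1})\times k_i}\end{pmatrix}$ is just $Z_i$ padded with zero rows, and so represents the very same infinite matrix $z_i$; consequently $z_{i+1}z_i$ and $z_i$ coincide in $M_\infty(R)$, which in particular shows $(z_i)\in \preS(R)$.

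Finally I would conclude by appealing to the definition of $\psi$ in \autoref{thm_CuR_iso_WR}, which computes $\psi([P])$ from any direct-limit presentation of $P$. Applied to the presentation $P\cong\varinjlim_i((R^+)^{k_i},Z_i\cdot)$ it yields $\psi([P])=[(z_i)]$, whence $w=\psi(\psi^{-1}(w))=\psi([P])=[(z_i)]$, so $(z_i)$ is the desired representative. I expect the only point requiring genuine care to be this last appeal to the well-definedness of $\psi$: it is what lets me assert that the specific sequence $(z_i)$ represents $w$ \emph{itself}, rather than merely an element whose associated module happens to be isomorphic to $P$. Everything else is the routine bookkeeping that converts the finite matricial relation into $z_{i+1}z_i=z_i$, so the argument is essentially an extraction from the preceding proof rather than a new construction.
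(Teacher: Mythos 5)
Your proposal is correct and follows essentially the same route as the paper: the corollary is stated there precisely as an extraction from the proof of \autoref{thm:nonunitalSRCP}, which, just as you do, represents $\psi^{-1}(w)$ by $P=E(R^{(\N)})$ with $E\in \mathrm{FCM}(R)$ via \autoref{lem:special-rep}, runs $E$ through the construction of \autoref{lem:projective-as-limit-of-free} to get the matrices $Z_i$, and reads off $w=\psi([P])=[(z_i)]$. Your attention to the well-definedness of $\psi$ and to the conversion of the finite relation into $z_{i+1}z_i=z_i$ matches the points the paper relies on implicitly.
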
 

%=========================================================
\section{\texorpdfstring{The category $\SCu$ and the pair $\SCu (R)$}{The category SCu and the pair SCu(R)}}\label{sec:CatSCu}

As mentioned above, we will consider in this section the pair $(\RLambda(R), \SR(R))$ and show that it sits naturally in a category that we term $\SCu$ consisting of pairs $(S,W)$ where $S$ is a $\Cu$-semigroup and $W$ is a subsemigroup closed under suprema of certain sequences. We also show that the assignment $R\mapsto(\RLambda(R), \SR(R))$ is functorial.

%=========================================================
\begin{parag}[Weakly increasing sequences]
	\label{par:weaklyinc}
	Let $S$ be a $\Cu$-semigroup. A sequence $(x_{n})$ of elements in $S$ is said to be \emph{weakly increasing} if, for every $n$ and for every $x\ll x_{n}$, there exists $m_{0}$ such that $x\ll x_{m}$ whenever $m\geq m_{0}$.
	
	It is evident that every increasing sequence in $S$ is also weakly increasing. We know that increasing sequences always have suprema in $S$, and we show below that this is also the case for weakly increasing sequences. Although the concept of a weakly increasing sequence may seem somewhat artificial, it will become key to show that the category $\SCu$ introduced in \autoref{par:SCu} admits inductive limits, as we prove in \cite{AntAraBosPerVil23arX:ContandIde}.
\end{parag}
%=========================================================
\begin{lemma}
	\label{lma:wincsup} Let $S$ be a $\Cu$-semigroup. Then, every weakly increasing sequence has a supremum in $S$.
\end{lemma}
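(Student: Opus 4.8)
The plan is to realize the supremum as the supremum of a single increasing sequence extracted from the way-below elements of the terms $x_n$, so that (O1) applies directly. First I would use (O2) to fix, for each $n$, a rapidly increasing sequence $(x_n^{(k)})_k$ with $x_n^{(k)}\ll x_n^{(k+1)}$ and $\sup_k x_n^{(k)}=x_n$, and consider the countable set $E=\{x_n^{(k)}\mid n,k\in\N\}$. Every element of $E$ is way-below some $x_n$; conversely, whenever $x\ll x_n$ the definition of $\ll$ applied to the increasing sequence $(x_n^{(k)})_k$ forces $x\le x_n^{(k)}$ for some $k$. Hence $E$ is cofinal among all elements compactly contained in some term of the sequence.

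The crucial step is to show that $E$ is upward directed, and this is precisely where the \emph{weakly increasing} hypothesis enters. Given $x_{n_1}^{(k_1)}$ and $x_{n_2}^{(k_2)}$ in $E$, each lies way-below $x_{n_1}$ and $x_{n_2}$ respectively, so by weak increasingness there is a single large index $m$ with $x_{n_1}^{(k_1)}\ll x_m$ and $x_{n_2}^{(k_2)}\ll x_m$ simultaneously. Writing $x_m=\sup_k x_m^{(k)}$ and invoking the definition of $\ll$ together with the monotonicity of $(x_m^{(k)})_k$, I obtain a common index $k$ with $x_{n_1}^{(k_1)},x_{n_2}^{(k_2)}\le x_m^{(k)}$, and $x_m^{(k)}\in E$ is the sought upper bound. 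Since a general $\Cu$-semigroup need not admit finite suprema, this indirect production of an upper bound \emph{inside} the prescribed countable family, rather than a naive join, is the heart of the argument; I expect it to be the only genuinely delicate point.

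Once directedness is in hand, I would extract from the countable directed set $E$ an increasing cofinal sequence $(z_j)$ in the standard way (enumerate $E$ and repeatedly pass to a common upper bound of the current term and the next listed element), and set $s:=\sup_j z_j$, which exists by (O1). It then remains to check that $s$ is the least upper bound of $(x_n)$. For the upper-bound property, each $x_n=\sup_k x_n^{(k)}\le s$ because every $x_n^{(k)}\in E$ is dominated by some $z_j\le s$. For minimality, if $t$ is any upper bound of $(x_n)$, then each $z_j\in E$ equals some $x_n^{(k)}\ll x_n\le t$, so $z_j\le t$, whence $s=\sup_j z_j\le t$. Both of these verifications are formal, so the proof reduces essentially to the directedness argument above.
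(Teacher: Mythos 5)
Your proof is correct and follows essentially the same strategy as the paper's: decompose each $x_n$ via (O2) into a rapidly increasing sequence, use weak increasingness to push finitely many way-below approximants into a common later term $x_m$ and then, via the definition of $\ll$, into a single approximant $x_m^{(k)}$, and finally apply (O1) to an extracted increasing sequence. The only difference is organizational — you phrase the key step as upward directedness of the countable set of approximants and then extract a cofinal chain, whereas the paper runs an explicit double-index diagonal induction — but the mathematical content is the same.
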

\begin{proof}
	We use an argument similar to the proof of \autoref{rmk:ifneedbe} (which in turn is similar to the proof that increasing sequences in a $\Cu$-semigroup have suprema). We give some details as this will be used again below.
	
	Let $(x_{n})$ be a weakly increasing sequence in $S$. Since $S$ is a $\Cu$-semigroup, we may write each $x_{m}$ as $x_{m}=\sup_{n}x_{n}^{(m)}$, where $(x_{n}^{(m)})$ is a rapidly increasing sequence. We construct increasing sequences of positive integers $(n_i)$, $(m_i)$ such that $x_{n_i+j}^{(m_i)}\ll x_{n_k}^{(m_k)}$ whenever $i+j\leq k$.
	
	To do this, we define the sequence inductively. Let $n_1=0$ and $m_1=1$. Since $x_{n_1+1}^{(m_1)}=x_1^{(1)}\ll x_1$, there is $m_2>1$ such that $x_1^{(1)}\ll x_{m_2}$, and thus there is $n_2>0$ with $x_1^{(1)}\ll x_{n_2}^{(m_2)}$. Now, assume that $n_i$, $m_i$ have been constructed for $i\leq k$. Since for each $1\leq j\leq k$ we have $x_{n_1+k-(j-1)}^{(m_j)}\ll x_{m_j}$, and the sequence is weakly increasing, there is $m_{k+1}> m_k$ such that $x_{n_1+k-(j-1)}^{(m_j)}\ll x_{m_{k+1}}$ for all $j$. Thus, there is $n_{k+1}>n_k$ such that $x_{n_1+k-(j-1)}^{(m_j)}\ll x^{(m_{k+1})}_{n_{k+1}}$ for all $j$. This completes the inductive step.
	
	After reindexing the sequence $(n_i)$, we may assume that $n_i=i$, and thus $x_{i+j}^{(m_i)}\ll x_{i+j}^{(m_{i+j})}$ whenever $i,j\geq 1$. Now, the sequence $(x_{k}^{(m_k)})$ is rapidly increasing, since $x_{k}^{(m_k)}\ll x_{k+1}^{(m_k)}\ll x_{k+1}^{(m_{k+1})}$, and one may check that its supremum is the supremum of the weakly increasing sequence.
\end{proof}
%=========================================================
\begin{remark} Although we will not be using this, it is worth mentioning that weakly increasing sequences as defined in \autoref{par:SCu} are compatible with other properties in the category $\Cu$. Namely,
	\begin{enumerate}[(i)]
		\item $\Cu$-morphisms preserve weakly increasing sequences and their suprema.
		\item The addition in a $\Cu$-semigroup is compatible with suprema of weakly increasing sequences.
	\end{enumerate}
\end{remark}
%======================================================
\begin{parag}[The Category $\SCu$]
	\label{par:SCu}
	Let $S$ be a $\Cu$-semigroup. We say that a subset $H$ of $S$ is \emph{closed under suprema of weakly increasing sequences} if, given any weakly increasing sequence $(x_{n})$ in $S$ whose elements are in $H$, we have that $\sup x_{n}\in H$.
	
	We define $\SCu$ to be the abstract category whose \emph{objects} are the pairs $(S,W)$, where $S\in \Cu$ and $W$ is a submonoid of $S$ closed under suprema of weakly increasing sequences. The \emph{morphisms} in $\SCu$ are $f\colon(S_1,W_1)\to (S_2,W_2)$ where $f\colon S_1\to S_2$ is a $\Cu$-morphism such that $f(W_1)\subseteq W_2$.
\end{parag}

%=========================================================
\begin{examples} The following are natural examples in the category $\SCu$.
	\begin{enumerate}[(i)]
		\item Any pair $(S,W)$ with $S\in \Cu$ and $W$ a sub-$\Cu$-semigroup of $S$ is an object in $\SCu$.
		\item A nonzero $\Cu$-semigroup $S$ is said to be {\it simple} if the only ideals of $S$ are $\{ 0\}$ and $S$ (see e.g. \cite{APT-Memoirs2018} for the definition of ideal in a $\Cu$-semigroup). Let $S$ be a simple $\Cu$-semigroup. Then $(S,\{ 0,\infty \})$ is an object in $\SCu$. Note that $\{ 0,\infty \}$ is not always a sub-$\Cu$-semigroup of $S$.
		\item The pair $([0,\infty],\ol{\N} )$, where $\ol{\N}= \N \cup \{\infty\}$, is an object in $\SCu$. This follows since every weakly increasing sequence with elements in $\ol{\N}\subseteq [0,\infty ]$ has an increasing cofinal subsequence. (Here, $x\ll y$ if and only if $x< y$, for $x\in [0,\infty]$ and $y\in (0,\infty]$.) However, as we have observed above, $\ol{\N}$ is not a sub-$\Cu$-semigroup of $[0,\infty]$.
	\end{enumerate}
\end{examples}

%=========================================================

	\begin{proposition}\label{lma:SCuRinSCu} Let $R$ be a weakly $s$-unital ring. Then:
		\begin{enumerate}[{\rm (i)}]
			\item The pair $\SCu(R):=(\RLambda (R) ,\SR (R))$ is an object in $\SCu$.
			\item If $R'$ is another weakly $s$-unital ring and $f\colon R\to R'$ is a ring homomorphism, then $f$ induces a morphism $\SCu(f)\colon (\RLambda (R), \SR (R))\to(\RLambda (R'), \SR (R'))$.
		\end{enumerate}
	\end{proposition}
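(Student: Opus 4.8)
The plan is as follows. For part (i), recall that $\RLambda(R)$ is a $\Cu$-semigroup by \autoref{rmk:ifneedbe} (here we use that $R$ is weakly $s$-unital), and that $\SR(R)$ is a submonoid of $\RLambda(R)$ by its very construction in \autoref{subsec:WR}. Thus the only substantial point is to verify that $\SR(R)$ is closed under suprema of weakly increasing sequences. For part (ii), I would let $\RLambda(f)$ be the map induced by applying $f$ entrywise to representatives of sequences; since $a=rbt$ implies $f(a)=f(r)f(b)f(t)$, the map $M_\infty(f)$ preserves $\precsim_1$, and hence descends to a well-defined, additive, order-preserving map $\RLambda(R)\to\RLambda(R')$. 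Using the characterization $[(x_n)]\ll[(y_n)]$ iff $x_n\precsim_1 y_m$ (for all $n$ and some $m$) from the proof of \autoref{rmk:ifneedbe}, one checks directly that $\RLambda(f)$ preserves $\ll$ and suprema of increasing sequences, so it is a $\Cu$-morphism; and since $y_{n+1}x_{n+1}x_n=x_n$ forces $f(y_{n+1})f(x_{n+1})f(x_n)=f(x_n)$, it sends $\preS(R)$ into $\preS(R')$ and hence $\SR(R)$ into $\SR(R')$. This yields the required $\SCu$-morphism.

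The heart of the proof is therefore the closure statement in (i), which I expect to be the main obstacle: \autoref{W(R)_closed} only handles honestly increasing sequences, whereas $\SCu$ requires closure under the strictly weaker notion of a weakly increasing sequence. I would proceed by adapting the diagonalization in the proof of \autoref{lma:wincsup}. Let $(w_k)$ be weakly increasing with each $w_k\in\SR(R)$, and write $w_k=[(x_n^{(k)})_n]$ with $y_{n+1}^{(k)}x_{n+1}^{(k)}x_n^{(k)}=x_n^{(k)}$. Each $w_k$ is the supremum of its truncations $v_n^{(k)}:=[(x_1^{(k)},\dots,x_n^{(k)},x_n^{(k)},\dots)]$, which form a rapidly increasing sequence. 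Running the inductive construction of \autoref{lma:wincsup} produces increasing indices $(m_k)$ and positions $(n_k)$ whose diagonal $v_{n_k}^{(m_k)}$ is rapidly increasing with $\sup_k v_{n_k}^{(m_k)}=\sup_k w_k$; in the interval picture of \autoref{lma:PictureSR} this supremum is the interval $\bigcup_k[0,[x_{n_k}^{(m_k)}]]$, so it is represented by the diagonal sequence $d_k:=x_{n_k}^{(m_k)}$.

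The key extra ingredient I would build into this induction is a cross-level relation. Since $v_{n_k+1}^{(m_k)}\ll w_{m_k}$ and the sequence $(w_j)$ is weakly increasing, we have $v_{n_k+1}^{(m_k)}\ll w_M$ for all large $M$; unwinding the $\ll$-characterization, this means $x_{n_k+1}^{(m_k)}\precsim_1 x_j^{(M)}$ for some position $j$. I would therefore choose, at each step, $m_{k+1}$ large enough (and $n_{k+1}$ past the witnessing position) so that, in addition to the finitely many requirements already imposed at that step,
\[
x_{n_k+1}^{(m_k)}\precsim_1 x_{n_{k+1}}^{(m_{k+1})}=d_{k+1}.
\]
With this in hand, write $x_{n_k+1}^{(m_k)}=a_{k+1}d_{k+1}b_{k+1}$. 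Then, using the intrinsic relation of $w_{m_k}$,
\[
d_k=y_{n_k+1}^{(m_k)}x_{n_k+1}^{(m_k)}d_k=y_{n_k+1}^{(m_k)}a_{k+1}d_{k+1}b_{k+1}d_k.
\]
Setting $u_k:=d_kb_k$ (exactly as in the proof of \autoref{W(R)_closed}) turns this into $u_k=(y_{n_k+1}^{(m_k)}a_{k+1})u_{k+1}u_k$, so $(u_k)\in\preS(R)$; since $(u_k)\sim(d_k)$, the supremum $\sup_k w_k=[(d_k)]=[(u_k)]$ lies in $\SR(R)$, as desired.

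The main obstacle, as indicated, is precisely the passage from increasing to weakly increasing sequences: one must feed the weak-increasingness hypothesis into the diagonalization at exactly the right moment to secure the cross-level subequivalences $x_{n_k+1}^{(m_k)}\precsim_1 d_{k+1}$, and then recover the defining relation of $\preS(R)$ through the substitution $u_k=d_kb_k$. Everything else---that $\RLambda(R)\in\Cu$, that $\SR(R)$ is a submonoid, and the verifications in (ii)---is either quoted from earlier results or a routine entrywise check.
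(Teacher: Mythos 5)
Your proposal is correct and follows essentially the same route as the paper: for (i) you run the diagonalization of \autoref{lma:wincsup} on the truncations, extract the cross-level subequivalence $x_{n_k+1}^{(m_k)}\precsim_1 d_{k+1}$, and recover membership in $\preS(R)$ via the substitution $u_k=d_kb_k$ exactly as in \autoref{W(R)_closed}, which is precisely the paper's argument (the paper merely reindexes so that $n_k=k$); for (ii) the entrywise application of $f$ and the preservation of the relation $y_{n+1}x_{n+1}x_n=x_n$ is also the paper's proof, with your direct verification that $\RLambda(f)$ is a $\Cu$-morphism replacing the paper's citation of \cite{APT-Memoirs2018}.
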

	\begin{proof}
		(i): By \autoref{rmk:ifneedbe}, $\RLambda(R)$ is a $\Cu$-semigroup, and by construction $\SR(R)$ is a subsemigroup of $\RLambda(R)$. We thus have to prove that $\SR (R)$ is closed under suprema of weakly increasing sequences. %To do so, we use the picture of $\Lambda (R)$ from Lemma 
		
		Let $([x_m])$ be a weakly increasing sequence in $\RLambda (R)$ with $[x_m]\in \SR(R)$ for all $m$. In order to construct the supremum of $([x_m])$, we follow the argument in \autoref{lma:wincsup}. Write $x_m=(x_n^{(m)})$, and find $y^{(m)}_n$ such that $y^{(m)}_{n+1}x_{n+1}^{(m)}x_n^{(m)}=x_n^{(m)}$. Since $R$ is weakly $s$-unital, we have that for each $m$ the sequence $z_{m,n}=(x_1^{(m)},x_2^{(m)},\dots,x_n^{(m)},x_n^{(m)},\dots)$ satisfies that $([z_{m,n}])$ is rapidly increasing with supremum $[x_m]$ in $\RLambda(R)$ (see the proof of \autoref{rmk:ifneedbe}).
		
		Arguing as in the proof of \autoref{lma:wincsup}, we find an increasing sequence $(m_k)$ such that $x_{k+1}^{(m_k)}\precsim_1 x_{k+1}^{(m_{k+1})}$ and $\sup [x_m]=[(x_{k}^{(m_k)})]$. Now, as in \autoref{W(R)_closed}, since $x_k^{(m_k)}\precsim_1 x_{k+1}^{(m_{k+1})}$ there are elements $a_{k+1}$, $b_{k+1}$ such that
		$x_{k+1}^{(m_k)}=a_{k+1}x_{k+1}^{(m_{k+1})}b_{k+1}$. Thus
		\[
		x_k^{(m_k)}=y_{k+1}^{(m_k)}x_{k+1}^{(m_k)}x_k^{(m_k)}=y_{k+1}^{(m_k)}a_{k+1}x_{k+1}^{(m_{k+1})}b_{k+1}x_k^{(m_k)}.
		\]
		Therefore, $[(x_k^{m_k})]=[(x_k^{(m_k)}b_k)]$, and the latter belongs to $\SR(R)$, as
		\[
		x_k^{(m_k)}b_k\precsim_1 x_{k+1}^{(m_{k+1})}b_{k+1} \text{ and } x_k^{(m_k)}b_k=(y_{k+1}^{(m_k)}a_{k+1})(x_{k+1}^{(m_{k+1})}b_{k+1})(x_k^{(m_k)}b_k).
		\]
		(ii): Let $R'$ be another weakly $s$-unital ring and let $f\colon R\to R'$ be a ring homomorphism, which we can extend to a homomorphism $M_\infty(R)\to M_\infty(R')$ compatible with $\precsim_1$ and $\oplus$, also denoted by $f$.
		
		Thus, we obtain a morphism of positively ordered monoids $\W (R)\to \W (R')$ defined by $\W(f)([x])=[f(x)]$. By the arguments in \cite[Paragraph 5.5.3 and Remark 5.5.6]{APT-Memoirs2018}, the assigment $\Cu(f)\colon\RLambda(R)\to\RLambda(R')$ defined by $\Cu(f)([(x_n)])=[(f(x_n))]$ is a $\Cu$-morphism.
		
		By definition $\SR (R)$ is the submonoid of $\RLambda (R)$ of elements $[(x_1,x_2,\ldots)]$ such that for each $n$, there is $y_{n+1}$ satisfying $y_{n+1}x_{n+1}x_n=x_n$. Thus, one has
		\[
		f(y_{n+1})f(x_{n+1})f(x_n)=f(x_n)
		\]
		and, consequently, $[(f(x_1),f(x_2),\ldots)]\in \SR (R')$. Hence $\Cu(f)(\SR(R))\subseteq\SR(R')$, as desired.
	\end{proof}
	%=========================================================
	As an immediate consequence, we obtain:
	%=========================================================
	\begin{corollary}\label{cor:Scufunctor}
		Let $\mathrm{Rings}^{ws}$ be the category of weakly $s$-unital rings and ring homomorphisms. The assignment
		\[
		\begin{array}{cccc}
			\SCu\colon &{\rm Rings}^{ws}&\longrightarrow&\SCu\\
			&R&\mapsto&(\RLambda (R), \SR (R))
		\end{array}
		\]
		is a functor.
	\end{corollary}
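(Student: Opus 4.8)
The plan is to observe that essentially all the substantive work has already been carried out in \autoref{lma:SCuRinSCu}: part (i) guarantees that the object assignment $R \mapsto (\RLambda(R), \SR(R))$ lands among the objects of $\SCu$, and part (ii) produces, for each ring homomorphism $f\colon R \to R'$, a genuine $\SCu$-morphism $\SCu(f)\colon (\RLambda(R), \SR(R)) \to (\RLambda(R'), \SR(R'))$. Thus the only thing left to verify is that this assignment respects identities and composition, i.e.\ the two functoriality axioms.

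First I would recall the explicit description of the induced morphism from the proof of \autoref{lma:SCuRinSCu}(ii): a ring homomorphism $f$ extends entrywise to $M_\infty(R) \to M_\infty(R')$, and the resulting $\Cu$-morphism $\Cu(f)\colon \RLambda(R) \to \RLambda(R')$ is given by $\Cu(f)([(x_n)]) = [(f(x_n))]$, which moreover sends $\SR(R)$ into $\SR(R')$. With this formula in hand, the functoriality axioms are immediate. For the identity, applying $\id_R$ entrywise leaves every matrix unchanged, so $\SCu(\id_R)([(x_n)]) = [(x_n)]$ and hence $\SCu(\id_R) = \id_{\SCu(R)}$. For composition, given $f\colon R\to R'$ and $g\colon R' \to R''$, the entrywise extension of $g\circ f$ is the composite of the entrywise extensions, so that
\[
\SCu(g\circ f)([(x_n)]) = [((g\circ f)(x_n))] = [(g(f(x_n)))] = \SCu(g)\bigl(\SCu(f)([(x_n)])\bigr),
\]
whence $\SCu(g\circ f) = \SCu(g)\circ \SCu(f)$.

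I do not expect any genuine obstacle here, which is precisely why the statement is flagged as an immediate consequence. The only points requiring minimal care are bookkeeping: that passing to the unitization and to $\sim$-equivalence classes of sequences is compatible with composition. But since every map in sight is defined by applying the ring homomorphism coordinate by coordinate, both compatibility checks reduce to the trivial identity $(g\circ f)(x) = g(f(x))$ at the level of matrix entries. Hence the assignment is a well-defined functor.
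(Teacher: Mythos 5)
Your proposal is correct and matches the paper's (essentially implicit) argument: the paper presents the corollary as an immediate consequence of \autoref{lma:SCuRinSCu}, and you have simply spelled out the routine verification of the identity and composition axioms, which reduce to the entrywise formula $\SCu(f)([(x_n)])=[(f(x_n))]$. No issues.
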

		\begin{remark}
			In \cite{AntAraBosPerVil23arX:ContandIde} we will see that $\SCu$ is not always sequentially continuous. However, $\RLambda (R)$ always is, and $\SCu$ ends up being continuous in some relevant situations.
		\end{remark}
		%=========================================================

% !TEX root = MainCuntzRing.tex
% !TeX spellcheck = en_GB
%=========================================================
\section{\texorpdfstring{Compact elements in $\SR (R)$}{Compact elements and algebraicity in S(R)}}\label{sec:CompEl}
	
	We have shown in \autoref{sec:CatSCu} that for any ring $R$ we have that $\SR(R)$ is a subsemigroup of $\RLambda(R)$. Both semigroups are closed under increasing sequences, and $\RLambda(R)$ is a $\Cu$-semigroup in case $R$ is a weakly $s$-unital ring. However, the question of whether $\SR(R)$ is a $\Cu$-semigroup remains elusive, even in the weakly $s$-unital case.
	
	In this section we continue our study immersing on the \emph{way-below} relation inherited at $\SR(R)$ from $\RLambda(R)$. This helps on characterizing our construction in both the unit-regular and semilocal rings setting.
	
	\begin{parag}[Algebraic $\Cu$-semigroups and compact elements]
		\label{par:algebraic}
		Recall from \autoref{para:AbsCuSgp} that an element $x$ in a $\Cu$-semigroup $S$ is termed \emph{compact} if $x\ll x$. We also say that such a semigroup $S$ is \emph{algebraic} if every element is the supremum of an increasing sequence of compact elements.
		
		For $R$ be a weakly $s$-unital ring, if two elements $a,b\in \SR (R)$ satisfy $a\ll b$ in $\RLambda (R)$, then $a\ll b$ in $\SR (R)$. However, it is unclear when the way-below relation of $\SR(R)$ agrees with the one in $\RLambda(R)$. For example, it is conceivable that the object $(\overline{\mathbb{N}},\{ 0,\infty\})$ in $\SCu$ can be realized as $\SCu (R)$ for a weakly $s$-unital ring $R$, where $\infty\ll\infty$ in $\{ 0,\infty\}$ but $\infty\not\ll \infty$ in $\overline{\mathbb{N}}$.
		
		Keeping this type of examples in mind, for a given weakly $s$-unital ring $R$, an element $x\in\SR (R)$ is termed compact if $x\ll x$ in $\RLambda(R)$. Further, we will say that $\SR (R)$ is \emph{algebraic} if every element in $\SR (R)$ can be expressed as the supremum of an increasing sequence of compact elements. 
	\end{parag}
	%=========================================================
	\begin{lemma} Let $R$ be a weakly s-unital ring. If $\SR(R)$ is algebraic then it is a $\Cu$-semigroup. Moreover if $x\ll x$ in $\SR(R)$, then $x\ll x$ in $\RLambda(R)$. Therefore, the inclusion $\SR(R)\to \RLambda(R)$ is a $\Cu$-morphism. 
	\end{lemma}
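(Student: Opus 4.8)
The plan is to reduce everything to a single equivalence: under the algebraicity hypothesis, for $x,y\in\SR(R)$ one has $x\ll y$ in $\SR(R)$ if and only if $x\ll y$ in $\RLambda(R)$. One implication is already recorded in \autoref{par:algebraic} (if $a\ll b$ in $\RLambda(R)$ with $a,b\in\SR(R)$, then $a\ll b$ in $\SR(R)$, because $\SR(R)$ is closed under suprema of increasing sequences by \autoref{W(R)_closed}, and such suprema coincide with those of $\RLambda(R)$). For the reverse implication I would argue as follows: given $x\ll y$ in $\SR(R)$, use algebraicity to write $y=\sup_n c_n$ with $(c_n)$ increasing and each $c_n$ compact in $\RLambda(R)$. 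Since this is an increasing sequence in $\SR(R)$ with supremum $y$, the relation $x\ll y$ in $\SR(R)$ yields some $k$ with $x\le c_k$; then $x\le c_k\ll c_k\le y$ forces $x\ll y$ in $\RLambda(R)$.

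With this equivalence in hand the three assertions follow quickly. The moreover clause is just the case $y=x$. The therefore clause is then immediate: the inclusion $\SR(R)\to\RLambda(R)$ is a morphism of positively ordered monoids, it preserves suprema of increasing sequences (these being computed identically in both objects, by \autoref{W(R)_closed}), and it preserves $\ll$ by the forward direction of the equivalence; hence it is a $\Cu$-morphism.

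To see that $\SR(R)$ is a $\Cu$-semigroup I would verify (O1)--(O4) of \autoref{para:AbsCuSgp}. Axiom (O1) is exactly \autoref{W(R)_closed}. For (O2) I would take the algebraic decomposition $x=\sup_n c_n$ and note that $c_n\ll c_n\le c_{n+1}$ in $\RLambda(R)$ gives $c_n\ll c_{n+1}$ there, hence $c_n\ll c_{n+1}$ in $\SR(R)$, so $(c_n)$ is rapidly increasing in $\SR(R)$ with supremum $x$. Axiom (O3) I would transport from $\RLambda(R)$: if $x'\ll x$ and $y'\ll y$ in $\SR(R)$ then the same holds in $\RLambda(R)$ by the equivalence, so $x'+y'\ll x+y$ in $\RLambda(R)$ since the latter is a $\Cu$-semigroup (\autoref{rmk:ifneedbe}), and the equivalence carries this back into $\SR(R)$. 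Axiom (O4) is inherited directly, since addition on $\SR(R)$ is the restriction of that on $\RLambda(R)$ and all the suprema involved agree with those of $\RLambda(R)$.

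The only genuine content lies in the reverse direction of the equivalence, whose engine is the interpolation property $a\le b\ll c\le d\Rightarrow a\ll d$ applied to the $\RLambda(R)$-compact approximants supplied by algebraicity; everything else is a transfer along that equivalence. I do not anticipate a serious obstacle, but the point to keep straight is that compactness of an element of $\SR(R)$ was \emph{defined} in \autoref{par:algebraic} as compactness in the ambient $\RLambda(R)$, which is precisely what makes those approximants available and the interpolation step legitimate.
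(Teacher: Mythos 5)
Your proposal is correct and follows essentially the same route as the paper: the paper's proof of the ``moreover'' clause is exactly your interpolation argument $x\le c_k\ll c_k\le x$ applied to the $\RLambda(R)$-compact approximants supplied by algebraicity, and it dismisses the $\Cu$-semigroup assertion as clear (which your (O1)--(O4) verification fills in along the expected lines). Your mild generalization to $x\ll y$ for distinct elements is harmless and changes nothing essential.
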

	\begin{proof} The first assertion is clear. Given $x\ll x\in \SR(R)$, write $x=\sup_n x_n$ with $x_n\ll x_n \in \RLambda(R)$. This implies that there exists $m$ such that $x\leq x_m\ll x_m \leq x$ and hence $x\ll x$ in $\RLambda(R)$. 	
	\end{proof}
	
		%=========================================================
	
	This raises the interesting question of characterizing the elements in $\SR(R)$ that are compact. To this end, recall that, for elements $[(x_n)]$, $[(y_n)]\in\RLambda(R)$, we have $[(x_n)]\ll[(y_n)]$ in $\RLambda(R)$ if, and only if, there is $n_0$ such that $x_n\precsim_1 y_{n_0}$ for all $n$.
	
	A natural source of compact elements of $\SR(R)$ comes from the idempotent elements of $M_\infty(R)$. Indeed, if $e\in M_\infty(R)$ is idempotent, let us denote by $(e)$ the constant sequence (in $T(R)$). We clearly have that $[(e)]\in \SR(R)$ and, for another idempotent $f\in M_\infty(R)$, it is readily verified that $[(e)]\leq [(f)]$ if, and only if, $e\mvns f$.
		
	Although not all compact elements in $\SR(R)$ come from cons\-tant sequences of idempotents, we show below that there is always a representative given by a constant sequence of an \emph{almost} idempotent element.
	%=========================================================
	
	\begin{lemma}\label{lma:Alg}
		Let $R$ be a weakly $s$-unital ring and let $[(x_n)]\in \SR (R)$ be a compact element. Then,  there exists $n_0\geq 1$ such that, for every $k\geq 1$, one can find elements $s_{1},\ldots ,s_{k}$ in $M_{\infty}(R)$ satisfying
		\[
		x_{n_0} = s_k x_{n_0}\ldots s_1 x_{n_0}.
		\]
		In particular, an element $[(x_n)]$ is compact if and only if there exist elements $s,z\in M_\infty(R)$ such that $[(x_n)_n]=[(z)_n]$ and $z=sz^2$.
	\end{lemma}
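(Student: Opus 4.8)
The plan is to use the compactness hypothesis to pin down a single index $n_0$ at which $x_{n_0}$ obeys a self-reproducing identity, and then to bootstrap that identity into everything asked for. \emph{First} I would unwind compactness: since $[(x_n)]\ll[(x_n)]$ in $\RLambda(R)$, the characterization of $\ll$ recalled just before the statement yields an index $n_0$ with $x_n\precsim_1 x_{n_0}$ for all $n$. Combining this with the defining relation of $\preS(R)$, namely $x_{n_0}=y_{n_0+1}x_{n_0+1}x_{n_0}$, and writing $x_{n_0+1}=r x_{n_0} t$ (possible because $x_{n_0+1}\precsim_1 x_{n_0}$), I obtain the key identity
\[
x_{n_0}=A\,x_{n_0}\,B\,x_{n_0},\qquad A:=y_{n_0+1}r,\quad B:=t.
\]
Extracting this self-similar relation is the technical heart of the argument; everything else is formal manipulation of it.

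\emph{Next}, to produce the $k$-fold factorizations I would induct on $k$. The case $k=1$ is the $\preS(R)$ relation itself, $x_{n_0}=(y_{n_0+1}x_{n_0+1})x_{n_0}$, so take $s_1=y_{n_0+1}x_{n_0+1}$. Given a factorization $x_{n_0}=s_k x_{n_0}\cdots s_1 x_{n_0}$, I replace the leftmost occurrence of $x_{n_0}$ by the right-hand side $A x_{n_0} B x_{n_0}$ of the key identity and absorb $A$ into $s_k$; this yields $k+1$ copies of $x_{n_0}$, with coefficients $s_kA,\,B,\,s_{k-1},\dots,s_1$. Concretely, for $k\ge 2$ this gives $x_{n_0}=A^{k-1}x_{n_0}(Bx_{n_0})^{k-1}$, which has exactly the required shape and settles the first assertion.

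\emph{Finally}, for the ``in particular'' I would argue as follows. One direction is immediate: if $[(x_n)_n]=[(z)_n]$ with $z=sz^2$, then $s z z=sz^2=z$ shows the constant sequence $(z)$ lies in $\preS(R)$, while the same identity read as $z=s\,z\,z$ exhibits $z\precsim_1 z$, so the characterization of $\ll$ gives $[(z)]\ll[(z)]$ in $\RLambda(R)$ and $[(x_n)]$ is compact. For the converse I would use the key identity to guess the right element, setting $z:=x_{n_0}B$ and $s:=A$. Multiplying $x_{n_0}=Ax_{n_0}Bx_{n_0}$ on the right by $B$ gives $z=Az^2=sz^2$. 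It remains to see $[(x_n)]=[(z)]$ in $\RLambda(R)$: the inequalities $x_n\precsim_1 x_{n_0}$ (for all $n$) and $x_{n_0}\precsim_1 x_{n_0}$ show $[(x_n)]=[(x_{n_0})]$ for the constant sequence $(x_{n_0})$, and then $z\precsim_1 x_{n_0}$ (using weak $s$-unitality, $x_{n_0}=bx_{n_0}c$ gives $z=bx_{n_0}(cB)$) together with $x_{n_0}\precsim_1 z$ (taking $r=A$, $t=x_{n_0}$ in $x_{n_0}=A(x_{n_0}B)x_{n_0}=Azx_{n_0}$) give $[(x_{n_0})]=[(z)]$. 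The only genuinely creative steps are discovering the self-similar identity and the substitution $z=x_{n_0}B$; the remaining $\precsim_1$-verifications are routine.
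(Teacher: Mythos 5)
Your proof is correct and follows essentially the same route as the paper's: both extract $n_0$ from the characterization of $\ll$ in $\RLambda(R)$, combine the defining relation $x_{n_0}=y_{n_0+1}x_{n_0+1}x_{n_0}$ with a $\precsim_1$-witness for $x_{n_0+1}\precsim_1 x_{n_0}$ to get the self-reproducing identity, and define $z$ and $s$ from the two-fold factorization exactly as in the paper (your $z=x_{n_0}B$ is the paper's $z=x_{n_0}s_1$). The only difference is bookkeeping: you iterate a single identity $x_{n_0}=Ax_{n_0}Bx_{n_0}$ by induction, while the paper telescopes the chain down to $x_{n_0+k}$ and substitutes a fresh witness $x_{n_0+j}=r_jx_{n_0}t_j$ for each $j$ in one step.
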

	
	\begin{proof}
		Since $[(x_n)]\in\SR(R)$, there are elements $y_n$ such that $x_n=y_{n+1}x_{n+1}x_n$ for all $n$. If $[(x_n)]\ll [(x_n)]$, this implies that there exists $n_0\geq 1$ such that
		\[
		x_{n_0+k}\precsim_1 x_{n_0}\text{ for every }k \ge 1.
		\]
		
		For any given $k$, let $r_k,t_k$ be such that $x_{n_0+k}=r_k x_{n_0} t_k$. Then, using that
		\[
		x_{n_0} = (y_{n_0+1}\ldots y_{n_0+k})x_{n_0+k}\ldots x_{n_0+1}x_{n_0} ,
		\]
		we get
		\[
		x_{n_0} = ((y_{n_0+1}\ldots y_{n_0+k})r_k)x_{n_0}
		(t_k r_{k-1})x_{n_0} (t_{k-1}r_{k-2})\ldots 
		(t_1) x_{n_0}.
		\]
		Thus, if we let $s_k=(y_{n_0+1}\ldots y_{n_0+k})r_k$, $s_{k-1}=t_k r_{k-1}$, $s_{k-2}=t_{k-1}r_{k-2},\dots, s_1=t_1$, we obtain $x_{n_0} = s_k x_{n_0}\ldots s_1 x_{n_0}$, as desired.
		
		In particular, if $k=2$, set  $z=x_{n_0} s_1$ and $s=s_2$. Now
		\[
		z=x_{n_0} s_1=(s_2x_{n_0} s_1x_{n_0}) s_1=s_2 z^2=sz^2,
		\]
		and clearly the constant sequence $[(z)_n]$ belongs to $\SR(R)$. Note that  $x_{n_0}=s_2(x_{n_0}s_1)x_{n_0}$ implies $x_{n_0}\precsim_{1} x_{n_0}s_1=z$, and also that $z=sz^2=s(x_{n_0}s_1)z$. Hence $z\precsim_1 x_{n_0}$. Therefore,
		\[ x_{n_0+k}\precsim_1 x_{n_0}\precsim_1 x_{n_0} s_1=z \precsim_1 x_{n_0},\]
		which implies that $[(x_n)_n]=[(z)_n]$.
		
		Finally, if $z\in M_\infty(R)$ satisfies $z=sz^2$ for some $s$, then clearly $z\precsim_1 z$ and therefore $[(z)]\ll [(z)]$.
	\end{proof}
	%=========================================================
	\begin{remark}\label{rmk:auxiliary_Lamdaseq}
		It is reasonable to extend results such as \autoref{lma:Alg} to general rings. To do this, one may define a transitive relation $\prec$ on $\RLambda(R)$ (or on $\SR(R)$) as follows:
		$$[(x_n)]\prec [(y_n)]\text{ if and only if there is } m\text{ such that } x_n\precsim_1 y_m\text{ for all }n.$$
		
		Using the construction of suprema in $\RLambda(R)$ (see the proof of \autoref{rmk:ifneedbe}), it is easy to verify that $\prec$ is formally stronger than the way-below relation $\ll$ on $\RLambda(R)$, and of course it agrees with it in case $R$ is weakly $s$-unital. Thus, one may term an element $x\in \SR(R)$ \emph{$\prec$-compact} in case $x\prec x$.
		
		It is also worth pointing out that the relation $\prec$ is an \emph{auxiliary relation} for the usual order in $\SR(R)$ (and also $\RLambda(R)$). Following \cite[Definition I-1.11]{GieHof+03Domains} (see also \cite[2.1.1]{APT-Memoirs2018}), an auxiliary relation is a relation satisfying that $0\prec x$ for any $x$, that $x\leq y$ whenever $x\prec y$, and whenever $x\leq y\prec z\leq u$, we have $x\prec u$.
		
		A close inspection of the arguments in \autoref{lma:Alg} reveals that, for any ring $R$, an element $[(x_n)]$ in $\SR(R)$ is $\prec$-compact if and only if $[(x_n)]=[(z)]$, where $z=sz^2$ for some $s$.
	\end{remark}
	%=========================================================
	As the example below shows, certain rings have very few $\prec$-compact elements.
	%=========================================================
	\begin{example}
		There exists a ring $R$ such that $x\prec x$ in $\SR(R)$ if and only if $x=0$.
	\end{example}
	\begin{proof} 
		Let $R$ be the ring in \autoref{rmk:LeftWR}. That is, $R$ is the subring of the free algebra $K[x_0, x_1,\ldots ]$ on infinitely many variables subject to the non-commutative relations $x_{n+1}x_n =x_n$ consisting of all polynomials with zero constant term.
		
		We claim that the only compact element of $\SR (R)$ is $0$.
		To show this, we need some easily proven facts about $R$. First of all, observe that the set
		$$\mathcal B = \{ x_{i_1}^{n_1}\cdots x_{i_r}^{n_r}  \mid n_i \ge 1,\, i_1<\cdots < i_r,\, r\ge 1  \} $$
		is a $K$-basis of $R$. This follows for instance by an immediate application of the Diamond's Lemma in Ring Theory, see \cite{Bergman-Diamond}, using the reduction system $x_jx_i\mapsto x_i$ for $j>i$. 
		
		Hence each element in $M_{\infty}(R)$ can be uniquely written as a linear combination $\sum_{p\in \mathcal B} a_pp$, where $a_p\in M_{\infty}(K)$. Recall from \autoref{rmk:LeftWR} the notion of the start $\text{st}(p)$ of a monomial $p\in \mathcal B$. The numbers $\text{st}(p)$ satisfy the following properties, some of which have been pointed out in \autoref{rmk:LeftWR}:
		\begin{enumerate}
			\item[(i)] $\text{st}(pq) = \text{min} \{ \text{st}(p) , \text{st}(q)\}$, 
			\item[(ii)] If $\text{st}(p) = \text{st}(q)$ then $ pq >p $ and $pq>q$ in the lexicographic order. 
			\item[(iii)] If $\text{st}(p) >  \text{st}(q) $ then $pq= q$.  
		\end{enumerate}
		
		Using these properties we now show that the only compact element of $\SR (R)$ is $0$.
		By \autoref{lma:Alg}, it is enough to show that the equation $z= sz^2$ has no nonzero solutions in $M_{\infty}(R)$. Suppose that $z\in M_{\infty}(R)$ is a nonzero element such that $z= sz^2$. Let $p$ be the unique monomial in $\mathcal B$ in the support of $z$ such that $\text{st}(p)$ is maximum amongst all monomials in the support of $z$, and such that $p$ is the smallest monomial in the support of $z$ amongst all monomials $q$ in the support of $z$ such that $\text{st} (q) = \text{st}(p)$, with respect to the lexicographic order. From the identity $z= sz^2$, it follows that there are two monomials $p_1,p_2$ in the support of $z$, and a monomial $q$ in the support of $s$ such that 
		$$p= qp_1p_2.$$
		By (i) we have
		$$\text{st} (p) = \text{min} \{ \text{st}(q),\text{st} (p_1), \text{st} (p_2)\}.$$
		It follows that  $\text{st}(p)= \text{st}(p_1) = \text{st}(p_2) \le \text{st}(q)$. 
		Now by (ii) we have $p_1p_2 > p_i \ge p$ in the lexicographic order, for $i=1,2$, and by (ii),(iii) we have, since $\text{st}(q)\ge \text{st}(p_1p_2)$, 
		$$ p = q(p_1p_2) \ge p_1p_2 > p,$$
		which is a contradiction. 
		
		We remark that the ring $R$ is not weakly s-unital. Indeed, if $x_0=rx_0s$ for some $r,s\in R$, one easily gets a contradiction expressing $r$ and $s$ in terms of the $K$-basis $\mathcal B$.
	\end{proof}
	
	%=========================================================
	
	\begin{proposition}\label{prp:CuRAlg}
		Let $R$ be a unital ring. Then, $\SR  (R)$ is an algebraic $\Cu$-semigroup whenever every projective module of $R$ is the direct sum of finitely generated modules.
		
		In particular, this is the case for weakly semi-hereditary rings, one-sided principal ideal rings and $R=C(X)$ for any strongly zero dimensional $X$. 
	\end{proposition}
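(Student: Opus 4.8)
The plan is to work in the module picture $\CP (R)$, which by \autoref{thm_CuR_iso_WR} is order-isomorphic to $\SR (R)$; since an order-isomorphism of posets automatically preserves suprema of increasing sequences, the property of being algebraic may be verified on either side. First I would record that finitely generated projective modules yield compact elements: if $Q$ is finitely generated projective, write $Q\cong eR^m$ with $e=e^2\in M_m(R)\subseteq M_\infty(R)$, so that the class of $Q$ corresponds to the constant sequence $[(e)]\in \SR(R)$ (the limit $\varinjlim(R^m,e\cdot)$ being $eR^m$). Taking $z=e$ and $s=1$ gives $z=sz^2$, whence $[(e)]$ is compact by \autoref{lma:Alg}. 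Thus the only thing the hypothesis needs to supply is a decomposition of an arbitrary element of $\CP(R)$ into such pieces.

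So fix $[P]\in \CP(R)$ with $P$ countably generated projective. By hypothesis $P=\bigoplus_{i\in I}P_i$ with each $P_i$ finitely generated; since $P$ is countably generated, a support argument (the countably many generators involve only countably many summands) shows that only countably many $P_i$ are nonzero, so we may take $P=\bigoplus_{i=1}^{\infty}P_i$. Each $P_i$ is a direct summand of the projective module $P$, hence finitely generated projective. Setting $Q_n=P_1\oplus\cdots\oplus P_n$, each $Q_n$ is finitely generated projective and is a direct summand of $Q_{n+1}$ and of $P$, so $([Q_n])_n$ is an \emph{increasing} sequence of \emph{compact} elements with $[Q_n]\le [P]$ for all $n$.

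The core of the argument is to show $[P]=\sup_n[Q_n]$; as $[P]$ is already an upper bound, it suffices to prove it is the least one. Suppose $[T]\in\CP(R)$ satisfies $[Q_n]\le [T]$, i.e. $Q_n\precsim T$, for all $n$, and let $X\subseteq P$ be a finitely generated submodule. Since $P=\bigcup_n Q_n$ and the finitely many generators of $X$ each lie in some $Q_n$, we get $X\subseteq Q_N$ for some $N$. Viewing $X$ as a finitely generated submodule of $Q_N$, the relation $Q_N\precsim T$ produces morphisms $\phi\colon X\to T$ and $\psi'\colon T\to Q_N$ whose composite is the inclusion $X\hookrightarrow Q_N$; composing $\psi'$ with the inclusion $Q_N\hookrightarrow P$ gives $\psi\colon T\to P$ with $\psi\circ\phi$ equal to the inclusion $X\hookrightarrow P$. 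Hence the inclusion of every finitely generated submodule of $P$ factors through $T$, so $P\precsim T$ and $[P]\le [T]$, as needed. Therefore every element of $\CP(R)\cong\SR(R)$ is the supremum of an increasing sequence of compact elements, i.e. $\SR(R)$ is algebraic, and by the lemma immediately preceding this proposition $\SR(R)$ is consequently a $\Cu$-semigroup. The only genuinely load-bearing step is this least-upper-bound verification, and it is resolved cleanly by the observation that finitely generated submodules of $P$ are captured already at a finite stage $Q_N$; everything else is bookkeeping through the isomorphism $\CP(R)\cong\SR(R)$.

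It then remains to check that the three listed classes satisfy the standing hypothesis that every projective module is a direct sum of finitely generated modules. I would not prove this here but invoke known structure theory: for weakly semihereditary rings this is the classical decomposition of projectives into finitely generated (ideal-like) summands, for one-sided principal ideal rings it follows from the decomposition of projectives into cyclic summands, and for $R=C(X)$ with $X$ strongly zero-dimensional it follows from the corresponding description of projective $C(X)$-modules. Thus the hard part of the ``in particular'' clause is purely a matter of citing the appropriate decomposition theorems, after which the general statement applies verbatim.
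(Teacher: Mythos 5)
Your proof is correct and follows essentially the same route as the paper: pass to $\CP(R)$ via \autoref{thm_CuR_iso_WR}, observe that finitely generated projectives give compact (constant idempotent) classes, decompose $P=\bigoplus_i P_i$ into finitely generated summands, and identify $[P]$ as the supremum of the partial sums. You in fact supply more detail than the paper does at the only delicate point --- the least-upper-bound verification via containment of any finitely generated submodule in some finite partial sum --- which the paper asserts without argument.
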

	\begin{proof}
		Following the observations in \autoref{par:algebraic}, we only need to show that every element in $\SR(R)$ is the supremum of an increasing sequence of compact elements. For this, we will use the isomorphism between $\SR (R)$ and $\CP (R)$ proved in \autoref{thm_CuR_iso_WR}.
		
		First, note that any finitely generated projective module $P$ has an associated sequence in $\preS (R)$ of the form $(e,e,e,\dots)$, where $e=e^2\in M_{\infty}(R)$. As we have observed before, the class of such a sequence is compact in $\SR(R)$, and thus the class $[P]$ is compact in $\CP(R)$.
		
		Now, let $P$ be a countably generated projective module. From our assumptions on $R$, we may write $P=\oplus F_{i}$ with $F_{i}$ finitely generated (and projective). We have $F_{1}\precsim F_{1}\oplus F_{2}\precsim\ldots$.
		
		This shows that $[P]=\sup_n [\oplus_{i\leq n}F_i]$ is the supremum of an increasing sequence of compact elements in $\CP (R)$, as desired.
		
		The remaining statement is a consequence of the results in \cite{Puninski2007}.
	\end{proof}
	
	%=========================================================
	\begin{example}\label{exa:NotComp}
		As an example of a ring that does not satisfy the condition in  \autoref{prp:CuRAlg}, let $R=C[0,1]$.
		Then $R$ has an indecomposable, countably projective and pure ideal (which is not finitely generated). See \cite[Example 2.12]{Lam1999}.
	\end{example}
	
	%=========================================================
	We now do a more in-depth study of the semigroup $\SR (R)$ when $R$ is a unit-regular ring (\autoref{lma:OrderRegRings}) and when $R$ is a semilocal ring (\autoref{prp:OrderSemi}). Since $\SR (R)\cong \CP (R)$, we use these two pictures interchangeably.
	
	\begin{parag}[Unit-regular rings]

		Recall that a unital ring $R$ is said to be {\it unit-regular} if for each $x\in R$ there is an invertible element $u\in R$ such that $x=xux$. Unit-regular ring are precisely those unital rings $R$ such that $\V(R)$ is cancellative (\cite[Theoremm 4.5]{vnrr}).  
		
		We first observe that for a unit-regular ring $R$, the relation $P\precsim Q$ in $\preCP (R)$ is determined solely in terms of isomorphisms of all finitely generated submodules of $P$ with suitable submodules of $Q$.  
		
		\begin{lemma}\label{lma:OrderRegRings}
			Let $R$ be a unit-regular ring and let $P,Q$ be countably generated projective modules. Then, $P\precsim Q$ if and only if every finitely generated submodule of $P$ is isomorphic to a submodule of $Q$.
		\end{lemma}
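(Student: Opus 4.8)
The plan is to handle the two implications separately; the forward direction is essentially formal and uses nothing about $R$, while the backward direction is where (unit-)regularity does all the work.

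For the forward implication, suppose $P\precsim Q$ and let $X$ be a finitely generated submodule of $P$. By \autoref{def:PprecsimQ} there are morphisms $\phi\colon X\to Q$ and $\psi\colon Q\to P$ whose composite $\psi\circ\phi$ is the inclusion $X\hookrightarrow P$. Since this composite is injective, $\phi$ is injective as well, so $\phi$ restricts to an isomorphism of $X$ onto its image $\phi(X)$, which is a finitely generated submodule of $Q$. Thus every finitely generated submodule of $P$ is isomorphic to a submodule of $Q$.

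For the backward implication, the key input is the standard fact that over a von Neumann regular ring --- and unit-regular rings are in particular von Neumann regular --- every finitely generated submodule of a projective module is a direct summand (see \cite{vnrr}); I would recall its short proof: reduce to a finitely generated \emph{free} module, where a finitely generated submodule is a summand because finitely generated right ideals are generated by idempotents, and then pass back to the ambient projective via the modular law. Granting this, let $X\leq P$ be finitely generated and, by hypothesis, fix an isomorphism $\theta\colon X\to Y$ onto a submodule $Y\leq Q$. Since $Y\cong X$ is finitely generated and $Q$ is projective, $Y$ is a direct summand of $Q$; write $\iota_Y\colon Y\hookrightarrow Q$ for the inclusion and $\pi_Y\colon Q\to Y$ for the projection, so that $\pi_Y\circ\iota_Y=\id_Y$.

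It then remains to produce the required factorization explicitly. Setting $\phi=\iota_Y\circ\theta\colon X\to Q$ and $\psi=\iota\circ\theta^{-1}\circ\pi_Y\colon Q\to P$, where $\iota\colon X\hookrightarrow P$ denotes the inclusion, one computes $\psi\circ\phi=\iota\circ\theta^{-1}\circ\pi_Y\circ\iota_Y\circ\theta=\iota\circ\theta^{-1}\circ\theta=\iota$, which is precisely the inclusion of $X$ into $P$. Hence $P\precsim Q$. The only real difficulty is concentrated in the regular-ring fact that finitely generated submodules of projectives split; once $Y$ is known to be a direct summand of $Q$, the factorization is a one-line diagram chase and no genuine obstacle remains.
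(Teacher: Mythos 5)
Your proof is correct and follows the same route as the paper: the forward direction is the formal observation that $\phi$ must be injective, and the backward direction rests on the fact that finitely generated submodules of projective modules over a (von Neumann) regular ring are direct summands, which is exactly the citation the paper uses (\cite[Theorem 1.11]{vnrr}). You merely spell out the resulting factorization $\psi\circ\phi=\iota$ explicitly, which the paper leaves implicit.
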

		
		\begin{proof}
			Given any unital ring $R$, it follows from \autoref{def:PprecsimQ} that whenever $P\precsim Q$, then every finitely generated submodule of $P$ is isomorphic to a submodule of $Q$.
			
			Conversely, assume that $R$ is a unit-regular ring, and that every finitely generated submodule of $P$ is isomorphic to a (finitely generated) submodule of $Q$. Then $P\precsim Q$ follows from   \autoref{def:PprecsimQ} and the fact that all finitely generated submodules of $Q$ are direct summands of $Q$ (\cite[Theorem 1.11]{vnrr}). 
		\end{proof}
		
		%===============================================================
		Next example exhibits that above characterization does not hold in general.
		\begin{example}
			\label{example:counterexample}
			There exist unital commutative domains $R$ and countably ge\-ne\-rated projective modules $P$ and $Q$ such that all finitely generated submodules of $P$ are isomorphic to submodules of $Q$ but $P\precsim Q$ does not hold. 
		\end{example}
		
		\begin{proof}
			Let $R$ be a commutative domain with an indecomposable, countably ge\-ne\-rated projective module $Q$, which is not free, and take $P=R_R$. Then obviously $R$ is isomorphic to a submodule of $Q$. If there is a commutative diagram
			\[
			\xymatrix{R\ar@{->}^{\phi}[r]\ar@/_1pc/[rr]_{\id_R} &Q\ar@{->}^{\psi}[r]&R},
			\]	
			then $Q\cong R\oplus Q'$ for some projective module $Q'$, which is impossible since $Q$ is indecomposable and non-free.      
		\end{proof}
		
		%=========================================================================
		
		For a unit-regular ring, all semigroups already defined turn out to be isomorphic to either $\V(R)$ or $\SR(R)$.    
		
		\begin{proposition}\label{prp:CuRegular}
			Let $R$ be a unit-regular ring. Then we have 
			\begin{enumerate}[{\rm (i)}]
				\item $\V(R) = \W(R) = \W_\mathrm{M}(R)$ as ordered monoids, so that the orders defined in $\W(R)$ and $\W_\mathrm{M}(R)$ agree with the algebraic order.
				\item $\V^*(R) = \CP (R) = \SR(R) = \WLambda (R) $ as semigroups. The order $\precsim$ in $\CP (R)$ is given by: $P\precsim Q$ if and only if $P$ is isomorphic to a submodule of $Q$. We have that $\SR(R)= \CP (R)= \RLambda (R)$ as ordered semigroups.
			\end{enumerate} 
		\end{proposition}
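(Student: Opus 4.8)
The plan is to treat the two items separately, with item~(i) a quick consequence of earlier lemmas and item~(ii) resting on an embedding argument for countably generated projectives.

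For (i), I would first record that a unit-regular ring is von Neumann regular and \emph{stably finite}: since $R$ is unit-regular, $\V(R)$ is cancellative by \cite[Theorem 4.5]{vnrr}, so $x+y=x$ forces $y=0$. Thus \autoref{lma:Weasycase} applies and gives an order-isomorphism $\V(R)\cong\W(R)$, while \autoref{lma:CvsMclm} gives $\W(R)\cong\W_\mathrm{M}(R)$; both identifications are order-isomorphisms fixing the classes of idempotents, so the orders on $\W(R)$ and $\W_\mathrm{M}(R)$ are the algebraic order transported from $\V(R)$.

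The core of (ii) is the order description of $\CP(R)$, from which the rest follows. By \autoref{lma:OrderRegRings}, $P\precsim Q$ holds if and only if every finitely generated submodule of $P$ is isomorphic to a submodule of $Q$, so the task is to upgrade this ``local'' condition to an honest embedding $P\hookrightarrow Q$. I would write $P=\bigoplus_{n\ge 1}F_n$ with each $F_n$ finitely generated projective (over a regular ring every countably generated projective is a direct sum of finitely generated ones) and set $P_n=F_1\oplus\cdots\oplus F_n$. I would then build compatible split embeddings $\iota_n\colon P_n\hookrightarrow Q$ inductively: given $\iota_n$ with $Q=\iota_n(P_n)\oplus C_n$, the finitely generated module $P_{n+1}$ embeds in $Q$, hence is a direct summand of a finitely generated truncation $Q_{(L)}$ of $Q$ that also contains $\iota_n(P_n)$ as a summand (finitely generated submodules of projectives over regular rings are summands, \cite[Theorem 1.11]{vnrr}). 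Cancellation in $\V(R)$ applied to $[P_n]+[F_{n+1}]\le [Q_{(L)}]=[P_n]+[D_L]$ then yields a split embedding of $F_{n+1}$ into a complement $D_L$ of $\iota_n(P_n)$ in $Q_{(L)}$, and $D_L$ is in turn a summand of $C_n$. This lets me extend $\iota_n$ to $\iota_{n+1}$ fixing $P_n$, and $\iota=\bigcup_n\iota_n$ is the desired embedding. The converse being immediate, $P\precsim Q$ if and only if $P$ is isomorphic to a submodule of $Q$.

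Finally I would assemble the isomorphisms. Injectivity of $\Phi_R\colon\V^*(R)\to\CP(R)$ amounts to showing that mutual embeddings $P\hookrightarrow Q$ and $Q\hookrightarrow P$ (which, by the previous paragraph, are increasing unions of split embeddings at the finitely generated level) force $P\cong Q$; I would obtain this by a back-and-forth intertwining of the two families of split embeddings, again using cancellation in $\V(R)$. This gives $\V^*(R)\cong\CP(R)$ as monoids, with the embedding order on $\CP(R)$. Combining the isomorphism $\CP(R)\cong\SR(R)$ of \autoref{thm_CuR_iso_WR} with the identification $\WLambda(R)\cong\RLambda(R)$ of \autoref{lma:PictureSR}, it remains to prove $\SR(R)=\RLambda(R)$. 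For this I would start from an arbitrary $[(a_n)]\in\RLambda(R)$, replace each $a_n$ by a Murray--von Neumann equivalent idempotent $e_n$ (each $a_n$ is $\sim_1$ to an idempotent as in the proof of \autoref{lma:Weasycase}), and then, using regularity and refinement in $\V(R)$, straighten the chain $e_n\precsim_\mathrm{MvN}e_{n+1}$ into an \emph{increasing} chain $f_1\le f_2\le\cdots$ with $f_{n+1}f_n=f_n$; such a sequence lies in $\preS(R)$ and is $\sim$-equivalent to $(a_n)$, whence $[(a_n)]\in\SR(R)$. Thus $\SR(R)=\RLambda(R)=\WLambda(R)$ as ordered semigroups, and in particular $\SR(R)$ is a $\Cu$-semigroup by \autoref{rmk:ifneedbe}. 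The main obstacles are the inductive gluing in the embedding upgrade and the back-and-forth yielding the Schr\"oder--Bernstein conclusion, both of which rest on cancellation in $\V(R)$.
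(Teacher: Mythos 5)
Your part~(i) is exactly the paper's argument: after noting that cancellation in $\V(R)$ gives stable finiteness, one applies \autoref{lma:Weasycase} and \autoref{lma:CvsMclm}. For part~(ii) you take a genuinely different and more self-contained route. The paper invokes \cite[Theorem 1.4]{APP2000}, which supplies a monoid isomorphism $\Upsilon\colon\V^*(R)\to\Lambda_\sigma(\V(R))=\WLambda(R)$, checks that $\Upsilon$ factors as $\iota_R\circ\Phi_R$ through the canonical surjection $\Phi_R\colon\V^*(R)\to\CP(R)=\SR(R)$ and the inclusion $\iota_R\colon\SR(R)\to\WLambda(R)$, and concludes at once that both maps are isomorphisms; the description of the order on $\CP(R)$ is then quoted from \cite[Proposition 1.5]{APP2000}. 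You instead reprove the substance of those citations from scratch: the inductive upgrade of the finitely generated embedding condition of \autoref{lma:OrderRegRings} to a global embedding built from compatible split pieces, and the back-and-forth intertwining deducing $P\cong Q$ from $P\sim Q$, are precisely the arguments underlying \cite{APP2000}; and your idempotent-straightening argument establishes $\SR(R)=\RLambda(R)$ directly, rather than extracting it from the factorization. The paper's route is shorter and delegates the work to the reference; yours is longer but makes the role of cancellation in $\V(R)$ explicit at each step and is independent of \cite{APP2000}. The one place in your sketch that needs care is the claim that $D_L$ is a summand of $C_n$: for an arbitrary finitely generated summand $Q_{(L)}$ containing $\iota_n(P_n)$ this is not automatic (cancellation is not available for the countably generated complement $C_n$), but it becomes true by construction if you first fix a decomposition $C_n=\bigoplus_j G_j$ into finitely generated summands and take $Q_{(L)}=\iota_n(P_n)\oplus G_1\oplus\cdots\oplus G_L$; with that choice the induction closes and the argument is complete.
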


		\begin{proof}
			(i): This follows from Lemmas 2.6 and 3.3.
			
			(ii): By \cite[Theorem 1.4]{APP2000}, there is a monoid isomorphism
			$$\Upsilon  \colon \V^*(R) \longrightarrow \Lambda_{\sigma} (\V(R)).$$
			(We warn the reader that the monoid $\V^*(R)$ is denoted by $\W(R)$ in \cite{APP2000}.) 
			By (i), one has that $\V(R)= \W(R)$, so $\Lambda_{\sigma} (\V(R)) = \Lambda_{\sigma} (\W(R)) = \WLambda (R)$. The isomorphism $\Upsilon$ satifies that
			$\Upsilon (P)$ is the interval determined by the increasding sequence $ \{ [e_1R\oplus \cdots \oplus e_nR] : n\ge 1\}$ in $\V(R)$, where $P= \bigoplus_n e_nR$ and $e_n$ are idempotents of $R$. Hence $\Upsilon$ factors as the composition of homomorphisms
			\[ 
			\V^*(R) \overset{\Phi_R}{\longrightarrow} \CP (R) = \SR(R) \overset{\iota_R}{\longrightarrow} \WLambda (R) ,
			\] 
			where $\Phi_R$ is the canonical surjective homomorphism, and $\iota_R\colon \SR(R)\to \WLambda (R)$ is the natural inclusion. It follows that both $\Phi_R$ and $\iota_R$ are monoid isomorphisms. By \cite[Proposition 1.5]{APP2000} the order in $\CP(R)$ is determined by $[P] \le [Q]$ if and only if $P$ is isomorphic to a submodule of $Q$. (Observe that this is {\it not} the algebraic order in $\CP (R)$.)
		\end{proof}
	\end{parag}
	%=========================================================
	\begin{parag}[Semilocal rings]
		Recall that a unital ring $R$ is said to be semilocal if the quotient $R/J(R)$ is semisimple, i.e. if there exist divison rings $D_{1},\ldots ,D_{r}$ such that
		\begin{equation*}
			R/J(R)\cong M_{n_{1}}(D_{1})\times\ldots\times M_{n_{r}}(D_{r}).
		\end{equation*}
		
		Observe that we have
		\[
		\V^*(R/J(R)) = \CP (R/J(R)) = \SR(R/J(R))= \WLambda (R/J(R)) = \ol{\N}^{r},
		\]
		where the order here is the algebraic order, or equivalently, the componentwise order. The generators are the isomorphism classes of the simple $R/J(R)$-modules.  
		
		Moreover $\V^*(R)$ embeds in $\V^*(R/J(R))$ by Prihoda's Theorem \cite[Theorem 2.3]{Prihoda2007}.  
		
		Note that we also have a surjective homomorphism of ordered monoids 
		\[
		\pi \colon \W(R)\to \W(R/J(R))= \V(R/J(R))= \N^r,
		\]
		which extends to a surjective homomorphism 
		\[
		\pi \colon \WLambda (R) = \Lambda_{\sigma}(\W(R)) \to \WLambda (R/J(R))= \ol{\N}^r.
		\]
		
		Now we characterize the  equivalence relation $\sim $ on $\preCP (R)$ using the notion of dimension in the case of semilocal rings (\cite{Prihoda2007}). Recall that we define $\dim (P)=(x_{1},\ldots , x_{r})\in \ol{\N}^r$, where $(x_{1},\ldots , x_{r})$ is the image of $[P]$ under the map $\V^*(R)\to \ol{\N}^r$. Further, given two countably generated projective right $R$-modules $P,Q$, we say that $\dim(P)\leq \dim(Q)$ if the corresponding tuples compare componentwise. In this case, there exists a split $R/J(R)$-monomorphism $P/PJ(R)\rightarrow Q/QJ(R)$.
	\end{parag}

	Using the remarks above, we can easily characterize the order relation in $\CP(R)$ in the case of a semilocal ring:
	
	%=========================================================
	\begin{proposition}\label{prp:OrderSemi}
		Let $R$ be a semilocal ring and let $P,Q$ be two countably ge\-ne\-rated right $R$-modules. The following are equivalent:
		\begin{enumerate}[{\rm (i)}]
			\item $P\precsim Q$
			\item $\dim (P)\leq \dim (Q)$ (component-wise)
			\item $P$ is isomorphic to a pure submodule of $Q$.
		\end{enumerate}
	\end{proposition}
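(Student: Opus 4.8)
The plan is to prove the three conditions equivalent by working in the semisimple quotient $R/J(R)$, where dimension is transparent, and then transporting the information back to $R$ via Prihoda's results. Throughout I would use the reduction functor $M\mapsto M/MJ(R)\cong M\otimes_R R/J(R)$, together with the fact recorded just before the statement that $\dim(P)\le\dim(Q)$ in $\ol{\N}^r$ is equivalent to the existence of a split $R/J(R)$-monomorphism $P/PJ(R)\to Q/QJ(R)$. I would organize the logic as $\mathrm{(i)}\Rightarrow\mathrm{(ii)}$, $\mathrm{(iii)}\Rightarrow\mathrm{(ii)}$, and then a single step $\mathrm{(ii)}\Rightarrow(\text{$P$ is isomorphic to a direct summand of }Q)$, from which both $\mathrm{(i)}$ and $\mathrm{(iii)}$ follow at once, closing all cycles.

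First I would prove $\mathrm{(i)}\Rightarrow\mathrm{(ii)}$. Given $P\precsim Q$ and a finitely generated submodule $X\subseteq P$, \autoref{def:PprecsimQ} provides $\phi\colon X\to Q$ and $\psi\colon Q\to P$ whose composite is the inclusion $X\hookrightarrow P$. Applying $-\otimes_R R/J(R)$, the natural map $X/XJ(R)\to P/PJ(R)$ factors through $Q/QJ(R)$, so its image $(X+PJ(R))/PJ(R)$ is a homomorphic image of $Q/QJ(R)$; since $R/J(R)$ is semisimple this image is isomorphic to a direct summand of $Q/QJ(R)$, whence $\dim\bigl((X+PJ(R))/PJ(R)\bigr)\le\dim(Q)$. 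As $X$ ranges over the finitely generated submodules of $P$, the finite-length semisimple modules $(X+PJ(R))/PJ(R)$ form a directed family whose union is all of $P/PJ(R)$, so taking the supremum of dimensions yields $\dim(P)\le\dim(Q)$, i.e. $\mathrm{(ii)}$. For $\mathrm{(iii)}\Rightarrow\mathrm{(ii)}$ I would use purity directly: if $P$ is isomorphic to a pure submodule $P'\subseteq Q$, then tensoring the pure inclusion with the bimodule $R/J(R)$ keeps it injective, giving a monomorphism $P/PJ(R)\cong P'/P'J(R)\hookrightarrow Q/QJ(R)$, which splits over the semisimple ring $R/J(R)$ and hence forces $\dim(P)\le\dim(Q)$.

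The core of the argument is the implication $\mathrm{(ii)}\Rightarrow(\text{direct summand})$. From $\dim(P)\le\dim(Q)$ I would fix a decomposition $Q/QJ(R)\cong P/PJ(R)\oplus\bar C$ of semisimple $R/J(R)$-modules. By Prihoda's theorem \cite{Prihoda2007} such a direct-sum decomposition of the radical factor of the projective module $Q$ lifts to a decomposition $Q=A\oplus B$ with $A/AJ(R)\cong P/PJ(R)$; and since projective modules are determined up to isomorphism by their radical factors (equivalently, $\V^*(R)\hookrightarrow\V^*(R/J(R))=\ol{\N}^r$ is injective), we get $A\cong P$. Thus $P$ is isomorphic to a direct summand of $Q$, which is in particular a pure submodule, giving $\mathrm{(iii)}$. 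Finally, writing $Q=A\oplus B$ with $A\cong P$, any finitely generated $X\subseteq A$ admits the factorization $X\hookrightarrow A\hookrightarrow Q\xrightarrow{\mathrm{pr}}A\cong P$ of its inclusion, so $P\precsim Q$, giving $\mathrm{(i)}$.

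The hard part will be the lifting step inside $\mathrm{(ii)}\Rightarrow\mathrm{(iii)}$: one must make sure that a decomposition of the semisimple module $Q/QJ(R)$—which in general involves infinitely many simple summands, possibly with infinite multiplicities—genuinely lifts to a decomposition of the projective module $Q$, and that the summand with prescribed radical factor is isomorphic to $P$ rather than merely to something with the same dimension. Both points rely on Prihoda's analysis of projective modules and on injectivity of $\V^*(R)\to\ol{\N}^r$, and it is precisely the semilocality of $R$ that makes $R/J(R)$ semisimple, so that the monomorphisms and quotients of $Q/QJ(R)$ used above all split. I would take care to state the lifting of the full (infinite) decomposition rather than only of a single split monomorphism, since that is the form in which Prihoda's theorem applies cleanly.
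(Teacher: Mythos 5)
Your implications (i)$\Rightarrow$(ii) and (iii)$\Rightarrow$(ii) are correct: the first is a hands-on version of the paper's argument (which instead passes through the interval picture $\iota_R\colon\CP(R)\to\WLambda(R)$ and the map $\pi$ to $\ol{\N}^r$), and the second is the standard purity-tensoring argument. The problem is your central step, where you claim that $\dim(P)\le\dim(Q)$ forces $P$ to be isomorphic to a \emph{direct summand} of $Q$. This is false in general, and it is precisely the point of the proposition that one only gets a \emph{pure} submodule. Prihoda's theorem gives injectivity of $\V^*(R)\to\V^*(R/J(R))\cong\ol{\N}^r$; it does not say that the image is divisor-closed, nor that a decomposition $Q/QJ(R)\cong P/PJ(R)\oplus\bar C$ lifts to a decomposition of $Q$. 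For that you would need the complement $\bar C$ to be of the form $B/BJ(R)$ for some countably generated projective $B$, i.e.\ you would need $\dim(Q)-\dim(P)$ to lie in the image of $\V^*(R)$ in $\ol{\N}^r$ --- and by \cite{HPCrelle} that image is typically a proper submonoid cut out by a system of equations, so this fails. Indeed, if your lifting step were valid, then combined with your (i)$\Rightarrow$(ii) it would show that $[P]\le[Q]$ in $\CP(R)$ if and only if $[Q]=[P]+[B]$ in $\V^*(R)$, i.e.\ that the order on $\CP(R)$ is the algebraic order; the introduction of the paper states explicitly that for semilocal rings $\Phi_R\colon\V^*(R)\to\CP(R)$ is an isomorphism of monoids but \emph{not} of ordered monoids, except in trivial situations.

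The paper's route for (ii)$\Rightarrow$(iii) avoids any global splitting: one lifts the split monomorphism $\bar r\colon P/PJ(R)\to Q/QJ(R)$ \emph{together with its left inverse} $\bar s$ to maps $r\colon P\to Q$ and $s\colon Q\to P$ using projectivity, and then applies \cite[Lemma 2.1]{Prihoda2007} to the endomorphism $sr$ of $P$: for every finite subset $X\subseteq P$ there is $g\colon P\to P$ with $gsr(x)=x$ for all $x\in X$, so $h:=gs$ satisfies $hr|_X=\id_X$. This shows that $r$ is injective and that $r(P)$ is a locally split, hence pure, submodule of $Q$ (\cite[Exercise \S 4.38]{Lam1999}); the implication (iii)$\Rightarrow$(i) is then \cite[Exercise \S 4.41]{Lam1999}. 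So the correct mechanism is splitting locally on finite subsets, not lifting the full direct-sum decomposition; the sentence in which you insist on carrying over the entire (infinite) decomposition is exactly the step that cannot be performed.
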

	\begin{proof}
		(i)$\implies$(ii): Let $\mathbf{x} = (x_1,\dots , x_r)\in \N^r$ such that $\mathbf{x} \le \dim (P)$. Let $\iota_R \colon \CP (R)\to \WLambda (R)$ be the canonical inclusion. Then $\iota_R ([P]) \subseteq \iota_R ([Q])$ (as elements in $\Lambda_{\sigma}(\W(R))$). Take $z\in \W(R)$ such that $z \in \iota_R ([P])$ and $\pi(z)= \mathbf{x}$. Then $z\in \iota_R ([Q])$, which means that $\pi (z) \le \pi (\iota_R[Q]) = \pi ([Q]) = \dim (Q)$. It follows that $\dim (P)\le \dim (Q)$.   
		
		(ii)$\implies$(iii): Assume that $\dim (P)\leq \dim (Q)$. Then, one can construct a split monomorphism $\bar{r}\colon P/PJ(R)\rightarrow Q/QJ(R)$. Let $\bar{s} \colon Q/QJ(R)\rightarrow P/PJ(R)$ be such that $\bar{s}\bar{r}=\id_{P/PJ(R)}$.
		
		Since $P,Q$ are projective, both maps can be lifted to $r \colon P\rightarrow Q$ and $s \colon Q\rightarrow P$.
		
		Moreover, since $\bar{s}\bar{r}=\id_{P/PJ(R)}$, we know from \cite[Lemma 2.1]{Prihoda2007}, applied to $sr$, that for any finite subset $X\subseteq P$ there exists a morphism $g\colon P\rightarrow P$ such that $gsr(x)=x$ for every $x\in X$. Hence $h:= gs\colon Q\to P$ satisfies that $hr(x)= x$ for all $x\in X$.
		It follows that $r$ is injective and $r(P)$ is pure in $Q$ (see \cite[Exercise \S 4.38]{Lam1999}).

		(iii)$\implies$(i): See  \cite[Exercise \S 4.41]{Lam1999}.
	\end{proof}
	
	%=========================================================
	\begin{corollary}\label{cor:SemLocRin}
		Let $P,Q$ be two countably generated projective right modules over a unital semilocal ring $R$. Then, $P\sim Q$ if and only if $P\cong Q$. That is, $\CP  (R)\cong \V^ {*}(R)$ as semigroups.
	\end{corollary}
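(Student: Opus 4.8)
The plan is to prove the statement by showing that the canonical surjective homomorphism $\Phi_R\colon \V^*(R)\to \CP(R)$ is in fact injective; this simultaneously yields the equivalence $P\sim Q\iff P\cong Q$ and the asserted semigroup isomorphism. The easy direction is immediate: if $P\cong Q$, then any isomorphism factors the inclusion of every finitely generated submodule of $P$ through $Q$ (and vice versa), so $P\precsim Q$ and $Q\precsim P$, whence $P\sim Q$.

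For the substantial direction, I would reduce the relation $\sim$ to equality of dimensions. By \autoref{prp:OrderSemi}, in the semilocal setting $P\precsim Q$ is equivalent to $\dim(P)\le\dim(Q)$ componentwise in $\ol{\N}^r$. Applying this to both $P\precsim Q$ and $Q\precsim P$, we get that $P\sim Q$ holds if and only if $\dim(P)=\dim(Q)$. The crucial input is then Prihoda's Theorem \cite[Theorem 2.3]{Prihoda2007}, already recalled in the excerpt, which guarantees that the dimension map $\V^*(R)\to \V^*(R/J(R))\cong\ol{\N}^r$ is injective. Consequently $\dim(P)=\dim(Q)$ forces $[P]=[Q]$ in $\V^*(R)$, that is, $P\cong Q$.

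Finally I would assemble the two directions to conclude. If $\Phi_R([P])=\Phi_R([Q])$, then by definition $P\sim Q$, hence $P\cong Q$ by the above, so $[P]=[Q]$ in $\V^*(R)$; this is the injectivity of $\Phi_R$. Since $\Phi_R$ is always surjective and additive by construction (see \autoref{para:CPRNonUni}), it is an isomorphism of abelian semigroups $\V^*(R)\cong\CP(R)$.

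I expect there to be essentially no genuine obstacle here: the entire content is carried by \autoref{prp:OrderSemi} together with the injectivity of the dimension map from Prihoda's Theorem. The only point requiring care is the bookkeeping check that the \emph{dimension} appearing in \autoref{prp:OrderSemi} is literally the embedding $\V^*(R)\hookrightarrow\ol{\N}^r$ supplied by Prihoda — but this is exactly how $\dim$ was defined in the paragraph preceding \autoref{prp:OrderSemi}, so the two notions coincide by construction.
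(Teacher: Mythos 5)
Your proposal is correct and is precisely the argument the paper intends (the corollary is stated without proof, immediately following \autoref{prp:OrderSemi} and the paragraph recalling that $\V^*(R)$ embeds in $\V^*(R/J(R))\cong\ol{\N}^r$ via Prihoda's Theorem). Combining the two directions of \autoref{prp:OrderSemi} to get $P\sim Q\iff\dim(P)=\dim(Q)$ and then invoking the injectivity of the dimension map is exactly the intended route.
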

	
		%=========================================================
	
	\begin{corollary}
		Let $R$ be a unital semilocal ring. Then, $\CP (R)$ can be embedded into $\ol{\N}^{r}$ as a partially ordered monoid.
	\end{corollary}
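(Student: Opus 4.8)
The plan is to take the embedding to be the dimension map. Recall from the paragraph preceding \autoref{prp:OrderSemi} that there is an additive map $\dim\colon \V^*(R)\to \ol{\N}^r$, realized as the composite $\V^*(R)\hookrightarrow \V^*(R/J(R)) = \ol{\N}^r$, which is injective by Prihoda's Theorem \cite[Theorem 2.3]{Prihoda2007}. I would like to show that $\dim$ factors through $\CP(R)$ and that the induced map is a partially ordered monoid embedding.

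First I would check that $\dim$ is constant on $\sim$-classes, so that it descends to a map $\overline{\dim}\colon \CP(R)\to \ol{\N}^r$. Indeed, if $[P]=[Q]$ in $\CP(R)$, then $P\precsim Q$ and $Q\precsim P$, and the implication (i)$\implies$(ii) of \autoref{prp:OrderSemi} yields $\dim(P)\le \dim(Q)$ and $\dim(Q)\le \dim(P)$, whence $\dim(P)=\dim(Q)$. Since addition in $\CP(R)$ is induced by direct sum and $\dim$ is additive, $\overline{\dim}$ is a monoid homomorphism. That $\overline{\dim}$ is moreover an \emph{order-embedding} is precisely the content of the equivalence (i)$\Leftrightarrow$(ii) in \autoref{prp:OrderSemi}: one has $[P]\le [Q]$ in $\CP(R)$, i.e. $P\precsim Q$, if and only if $\dim(P)\le \dim(Q)$ componentwise in $\ol{\N}^r$.

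Finally, injectivity of $\overline{\dim}$ comes for free: being an order-embedding into the poset $\ol{\N}^r$, if $\overline{\dim}([P])=\overline{\dim}([Q])$ then $[P]\le [Q]$ and $[Q]\le [P]$, so $[P]=[Q]$ by antisymmetry. (One could equally invoke \autoref{cor:SemLocRin}, which identifies $\CP(R)$ with $\V^*(R)$ as monoids, together with the injectivity of $\dim$ on $\V^*(R)$.) This exhibits $\overline{\dim}$ as a partially ordered monoid embedding $\CP(R)\hookrightarrow \ol{\N}^r$, as required. There is no genuine obstacle here: all the substance has already been extracted in \autoref{prp:OrderSemi}, and the corollary merely repackages that equivalence, together with additivity of the dimension, as an embedding statement.
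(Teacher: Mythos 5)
Your argument is correct and is exactly the intended one: the paper states this corollary without proof as an immediate consequence of \autoref{prp:OrderSemi} (the equivalence $P\precsim Q \Leftrightarrow \dim(P)\le\dim(Q)$) together with the additivity and injectivity of $\dim$ coming from Prihoda's embedding $\V^*(R)\hookrightarrow\V^*(R/J(R))\cong\ol{\N}^r$. Your handling of well-definedness, additivity, the order-embedding property, and injectivity via antisymmetry is all sound.
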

	
	%=========================================================

%=========================================================
\section{\texorpdfstring{\Cs{s}}{C*-algebras}}\label{sec_Cs}

Let $A$ be a \Cs. In this section we explore the relationship between the Cuntz semigroup $\Cu(A)$ of $A$ and the semigroup $\SR(A)$. We show in \autoref{thm:CuCs} that $\Cu(A)$ is a retract of $\SR(A)$, in the sense that there is an ordered monoid morphism $\Cu(A)\to\SR(A)$ that preserves suprema, compact containment, and has a left inverse that preserves suprema.

%=========================================================
\begin{remark}\label{W(A)_Rmk}
	Given $f,g\in C([0,\infty))$ such that $\text{supp}(f)=(\varepsilon,\infty)$ and $\text{supp}(g)=(\varepsilon ',\infty)$ with $\varepsilon '<\varepsilon $, there exists $r\in C([0,\infty))$ such that $f(t)=r(t)g(t)f(t)$ for each $t\in [0,\infty)$. In particular, we have $f\precsim_{1} g$. In general, if $\mathrm{supp}(f)\subseteq \mathrm{supp}(g)$, then $f\precsim g$ (see, e.g. \cite[Proposition 2.5]{APT2011})
\end{remark}

%=======================================================
\begin{parag}[Dense subsemigroups]
	\label{def:dense-subsemigroup} Let $S$ be a $\Cu$-semigroup. We will say that a subsemigroup $H$ of $S$ is a {\it dense subsemigroup} provided that whenever $x\ll y$ in $S$ there exists $s\in H$ such that $x\le s \le y$.
	
	For example, if $S$ is algebraic, then the subsemigroup $S_c$, consisting of the compact elements in $S$, is a dense subsemigroup.
\end{parag}

%=========================================================
\begin{lemma}\label{lma:extensions}
	Let $S$ be a $\Cu$-semigroup, $T$ a positively ordered monoid where suprema of increasing sequences exist and are compatible with addition. Let $H\subseteq S$ be a dense submonoid. Then, for any ordered monoid morphism $\varphi\colon H\rightarrow T$ that preserves suprema of increasing sequences with supremum in $H$, there exists an ordered monoid morphism $\phi\colon S\rightarrow T$ that preserves suprema of increasing sequences and $\phi|_{H}=\varphi$.
\end{lemma}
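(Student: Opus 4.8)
The plan is to \emph{define} $\phi$ by approximating an arbitrary $x\in S$ from below by elements of $H$ and taking a supremum in $T$. Concretely, using axiom (O2) I would write $x=\sup_n x_n$ for a rapidly increasing sequence $(x_n)$, so that $x_n\ll x_{n+1}$, and use the density of $H$ to choose, for each $n$, an element $s_n\in H$ with $x_n\le s_n\le x_{n+1}$. Then $(s_n)$ is increasing in $H$ (indeed $s_n\le x_{n+1}\le s_{n+1}$), so $(\varphi(s_n))$ is increasing in $T$, and I set $\phi(x):=\sup_n\varphi(s_n)$, which exists by the hypothesis on $T$.

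The first and principal step is to show that this assignment is well defined, that is, independent of the choice of $(x_n)$ and of the interpolating elements $(s_n)$. Given a second pair $(\tilde x_n)$, $(\tilde s_n)$ with $\sup_n\tilde x_n=x$, for each $n$ I would use $x_{n+1}\ll x_{n+2}\le x=\sup_m\tilde x_m$ and the definition of $\ll$ to produce an $m$ with $x_{n+1}\le\tilde x_m\le\tilde s_m$; hence $s_n\le x_{n+1}\le\tilde s_m$ and $\varphi(s_n)\le\varphi(\tilde s_m)\le\sup_k\varphi(\tilde s_k)$. Taking the supremum over $n$ and then interchanging the roles of the two sequences gives equality of the two suprema. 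This is the technical heart of the argument, and it is the step I expect to be the main obstacle, since it is where the compact-containment relation and the density of $H$ must be combined delicately; everything that follows is comparatively routine.

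Next I would check that $\phi|_H=\varphi$: for $h\in H$ I take a rapidly increasing sequence $(h_n)$ with $\sup_n h_n=h$ and interpolants $s_n\in H$; then $\sup_n s_n=h\in H$, so $\phi(h)=\sup_n\varphi(s_n)=\varphi(\sup_n s_n)=\varphi(h)$, using precisely that $\varphi$ preserves suprema of increasing sequences whose supremum lies in $H$. For monotonicity, if $x\le y$ with approximating data $(x_n,s_n)$ and $(y_n,t_n)$, then for each $n$ the chain $x_{n+1}\ll x_{n+2}\le x\le y=\sup_m y_m$ yields $m$ with $s_n\le x_{n+1}\le y_m\le t_m$, whence $\varphi(s_n)\le\varphi(t_m)\le\phi(y)$ and therefore $\phi(x)\le\phi(y)$.

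Additivity then follows from the axioms: given approximating data $(x_n,s_n)$ for $x$ and $(y_n,t_n)$ for $y$, axiom (O3) shows that $(x_n+y_n)$ is rapidly increasing and (O4) that its supremum is $x+y$, while $(s_n+t_n)$ lies in $H$ and interpolates $x_n+y_n\le s_n+t_n\le x_{n+1}+y_{n+1}$; by well-definedness and the compatibility of suprema with addition in $T$ one gets $\phi(x+y)=\sup_n(\varphi(s_n)+\varphi(t_n))=\phi(x)+\phi(y)$. Finally, to see that $\phi$ preserves suprema of increasing sequences, let $z=\sup_k z_k$; monotonicity gives $\sup_k\phi(z_k)\le\phi(z)$, and for the reverse inequality I approximate $z$ by data $(z_n',s_n)$ and use $z_{n+1}'\ll z_{n+2}'\le z=\sup_k z_k$ to find $k$ with $s_n\le z_{n+1}'\le z_k$, so that $\varphi(s_n)=\phi(s_n)\le\phi(z_k)\le\sup_j\phi(z_j)$; taking the supremum over $n$ yields $\phi(z)\le\sup_j\phi(z_j)$, completing the proof.
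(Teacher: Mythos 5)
Your proposal is correct and follows essentially the same route as the paper's proof: define $\phi(x)=\sup_n\varphi(s_n)$ using density to interpolate elements of $H$ into a rapidly increasing sequence for $x$, verify well-definedness and monotonicity via the compact-containment relation, and then check the restriction to $H$, additivity via (O3)/(O4), and preservation of suprema exactly as the paper does. No gaps.
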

\begin{proof}
	Given any $s\in S$, write $s=\sup s_{n}$ where $(s_{n})$ is a rapidly increasing sequence of elements in $S$. Since by assumption $H$ is dense in $S$, there is for each $n$ an element $s_n'\in H$ such that $s_n\le s_n'\le s_{n+1}$. Thus we may assume that $s_n\in H$ for all $n$. Define $\phi(s):=\sup_{n}\varphi (s_{n})$.
	
	If $(t_n)$ is another rapidly increasing sequence of elements in $H$ such that $\sup s_n\leq \sup t_n$, then for any $n$, there is $m$ with $s_n\leq t_m$, whence $\sup_{n}\varphi (s_{n})\leq\sup_{n}\varphi (t_{n})$. This implies that $\phi$ is well-defined and order-perserving. To see that $\phi$ preserves addition, let $s,t\in S$ and write $s=\sup_n s_n$, $t=\sup_n t_n$, for rapidly increasing sequences $(s_n)$ and $(t_n)$ in $H$. Thus, using our assumption on $T$, we get
	\[
	\phi(s+t)=\sup_n(\varphi(s_n)+\varphi(t_n))=\sup_n \varphi(s_n)+\sup_n\varphi (t_n)=\phi(s)+\phi(t), 
	\]
	and therefore $\phi$ is an ordered monoid morphism.
	
	Further, note that for every $h\in H$, we can write $h=\sup_{n}h_{n}$ for a rapidly increasing sequence $(h_{n})$ of elements in $H$. Since $\varphi$ preserves the supremum of such sequences, we have $\phi (h)=\sup_{n}\varphi (h_{n})=\varphi (h)$, and thus $\phi|_{H}=\varphi$.
	
	To see that $\phi$ preserves suprema, fix $s\in S$ and let $(t_{n})$ be an increasing sequence in $S$ with supremum $s$. Choose $(s_{n})$ to be a rapidly increasing sequence of elements in $H$ with supremum $s$. Since $\phi$ is order-preserving, we have that $\sup_n\phi(t_n)\leq \phi(s)$. Also, for every $n$, there is $m$ with $s_{n}\leq t_{m}$.  Using that $\phi|_{H}=\varphi$ and that $\phi$ is order-preserving, we obtain $\varphi(s_n)=\phi(s_n)\leq\phi(t_m)\leq\sup_k\phi(t_k)$, which implies that $\phi (s)\leq \sup_{k}\phi (t_{k})$ as required.
\end{proof}

%=========================================================
\begin{parag}[Retracts]
	Let $S$ be a $\Cu$-semigroup, and let $T$ be a positively ordered semigroup admitting suprema of increasing sequences, which are compatible with addition. Adapting the definition introduced in \cite[Definition~3.14]{ThiVil22DimCu}, we shall say that $S$ is a \emph{retract} of $T$  if there exist  ordered monoid morphisms $\varphi\colon S\to T$ and $\phi\colon T\to S$ such that
	\begin{enumerate}[{\rm (i)}]
		\item $\varphi$ preserves suprema and compact containment.
		\item $\phi$ preserves suprema.
		\item $\phi\varphi=\id_S$.
	\end{enumerate}
\end{parag}

%=========================================================
Given $\varepsilon>0$, we shall denote by $f_\varepsilon \in C([0,\infty))$ the continuous function that is $0$ on $[0,\varepsilon)$, linear on $[\varepsilon ,2\varepsilon]$, and $1$ elsewhere. Notice that, for each positive element $a$ in a \Cs{} $A$, the element $f_\varepsilon(a)$ has a unit in $A$, namely $f_{\varepsilon/2}(a)f_\varepsilon(a)=f_\varepsilon(a)$.
\begin{lemma}
	\label{lma:varepsilon}
	Let $A$ be a \Cs{} and let $a,b\in M_\infty(A)_+$ be such that $a\precsim b$ in $A\otimes\mathcal{K}$. Put $\varepsilon_n=1/2^n$. Then, for any $n$, there is $m$ such that $f_{\varepsilon_n}(a)\precsim_1f_{\varepsilon_{m}}(b)$.
\end{lemma}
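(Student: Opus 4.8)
The plan is to reduce to the standard exact comparison lemma of Rørdam for Cuntz subequivalence, using \autoref{W(A)_Rmk} to translate freely between the cutoff functions $f_\varepsilon$ and the functions $(\,\cdot\,-\varepsilon)_+$. The single tool I will use repeatedly is the following consequence of \autoref{W(A)_Rmk}: for a fixed positive element $c$ and continuous functions $g,h$ vanishing at $0$ with $\mathrm{supp}(g)=(s,\infty)$ and $\mathrm{supp}(h)=(t,\infty)$ and $t<s$, one has $g(c)\precsim_1 h(c)$.

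First I would record the two purely functional-calculus reductions. On the $a$-side, since $\mathrm{supp}(f_{\varepsilon_n})=(\varepsilon_n,\infty)$ and $\mathrm{supp}((\,\cdot\,-\varepsilon_{n+1})_+)=(\varepsilon_{n+1},\infty)$ with $\varepsilon_{n+1}<\varepsilon_n$, \autoref{W(A)_Rmk} applied to $a$ gives $r\in C([0,\infty))$ with $f_{\varepsilon_n}(a)=r(a)(a-\varepsilon_{n+1})_+f_{\varepsilon_n}(a)$, so that $f_{\varepsilon_n}(a)\precsim_1(a-\varepsilon_{n+1})_+$. Symmetrically, once a threshold $\delta>0$ is produced below, for any $m$ with $\varepsilon_m<\delta$ the same argument applied to $b$ (now with $\mathrm{supp}((\,\cdot\,-\delta)_+)=(\delta,\infty)\subseteq(\varepsilon_m,\infty)=\mathrm{supp}(f_{\varepsilon_m})$) yields $(b-\delta)_+\precsim_1 f_{\varepsilon_m}(b)$.

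The central step is to pass from the mere Cuntz domination $a\precsim b$ in $A\otimes\mathcal{K}$ to an \emph{exact} factorization of the cutoff $(a-\varepsilon_{n+1})_+$ through a cutoff of $b$. For this I would invoke Rørdam's comparison lemma (see, e.g., \cite{APT2011}): since $a\precsim b$, for the value $\varepsilon_{n+1}$ there are $\delta>0$ and $x\in A\otimes\mathcal{K}$ with $(a-\varepsilon_{n+1})_+=x(b-\delta)_+x^*$. Because the two cutoffs are supported in finite corners, say $a\in M_N(A)_+$ and $b\in M_M(A)_+$, so that $(a-\varepsilon_{n+1})_+\in M_N(A)$ and $(b-\delta)_+\in M_M(A)$, I would replace $x$ by its corner $1_Nx1_M\in M_{N\times M}(A)\subseteq M_\infty(A)$; the identity is unchanged since $(a-\varepsilon_{n+1})_+=1_N(a-\varepsilon_{n+1})_+1_N$ and $(b-\delta)_+=1_M(b-\delta)_+1_M$, whence $(a-\varepsilon_{n+1})_+\precsim_1(b-\delta)_+$ genuinely in $M_\infty(A)$. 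Concatenating this with the two reductions of the previous paragraph and using transitivity of $\precsim_1$ gives $f_{\varepsilon_n}(a)\precsim_1(a-\varepsilon_{n+1})_+\precsim_1(b-\delta)_+\precsim_1 f_{\varepsilon_m}(b)$ for any $m$ with $\varepsilon_m<\delta$, which is the assertion.

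I expect the only substantive point to be the appeal to Rørdam's lemma, which upgrades approximate Cuntz domination to an exact equation between cutoffs; everything else is routine functional calculus through \autoref{W(A)_Rmk}, and nowhere is reflexivity of $\precsim_1$ (hence weak $s$-unitality of $A$) needed, only its transitivity together with the three explicit factorizations. The one bookkeeping subtlety worth flagging is that Rørdam's $x$ a priori lives in $A\otimes\mathcal{K}$, so the truncation to the finite corner $1_Nx1_M$ is what secures the relation $\precsim_1$ in $M_\infty(A)$, as the lemma demands.
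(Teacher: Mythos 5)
Your proposal is correct and follows essentially the same route as the paper: both hinge on R{\o}rdam's lemma (in the form $(a-\varepsilon_{n+1})_+=r(b-\delta)_+r^*$, \cite[Proposition 2.17]{APT2011}) with $r$ truncated into $M_\infty(A)$, sandwiched between the two functional-calculus reductions from \autoref{W(A)_Rmk}. The only cosmetic difference is that the paper absorbs your final step $(b-\delta)_+\precsim_1 f_{\varepsilon_m}(b)$ into the single identity $r(b-\delta)_+f_{\varepsilon_m}(b)r^*=(a-\varepsilon_{n+1})_+$ by noting that $f_{\varepsilon_m}(b)$ is a unit for $(b-\delta)_+$, which is the same observation.
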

\begin{proof}
	Since $a\precsim b$ in $A\otimes \mathcal{K}$, given $n\in\mathbb{N}$, there exist $\delta_n >0$ and $r_n\in A\otimes\mathcal{K}$ such that
	\begin{equation*}
		(a-\varepsilon_{n+1})_{+}=r_n(b-\delta_n)_{+}r_n^{*}
	\end{equation*}
	(see, for instance, \cite[Proposition 2.17]{APT2011}). As $a,(b-\delta_n)_{+}\in M_{\infty}(A)$, we can take $r_n\in M_{\infty}(A)$ as well. Since $\delta_n >0$, there exists $m$ such that $f_{\varepsilon_{m}}(b)$ is a unit for $(b-\delta_n)_{+}$. Using this observation at the third step and \autoref{W(A)_Rmk} at the first step,  one gets
	\begin{equation*}
		f_{\varepsilon_{n}}(a)\precsim_{1} (a-\varepsilon_{n+1})_{+}=r_n(b-\delta_n)_{+}f_{\varepsilon_{m}}(b)r_n^{*}\precsim_{1}f_{\varepsilon_{m}}(b).\qedhere
	\end{equation*}
\end{proof}

%=========================================================
\begin{theorem}\label{thm:CuCs}
	Let $A$ be a \Cs{}. Then $\Cu(A)$ is a retract of $\SR(A)$. 
\end{theorem}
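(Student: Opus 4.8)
The plan is to build the two maps of the retract explicitly. To pass from $\Cu(A)=\W_\mathrm{C}(A\otimes\cK)$ into $\SR(A)$ I will use the functions $f_{\varepsilon_n}$ with $\varepsilon_n=1/2^n$, and to go back I will use the assignment $x\mapsto x^*x$ from \autoref{Cucls_Rmk}. The one structural nuisance to respect from the outset is that $\SR(A)$ is built from the \emph{finite} matrix algebra $M_\infty(A)$, whereas $\Cu(A)$ involves all of $A\otimes\cK$. For this reason I would not define $\varphi$ on all of $\Cu(A)$ directly; instead I first define it on the dense subsemigroup $H\subseteq\Cu(A)$ consisting of the classes $[a]_\mathrm{Cu}$ with $a\in M_\infty(A)_+$ (the image of the natural map $\W_\mathrm{C}(A)\to\Cu(A)$), and then extend via \autoref{lma:extensions}. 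Density of $H$ is exactly the statement that $\Cu(A)$ is the completion of $\W_\mathrm{C}(A)$.

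For $a\in M_\infty(A)_+$ one has $f_{\varepsilon_n}(a)\in M_\infty(A)$ because $f_{\varepsilon_n}(0)=0$. Since $\varepsilon_{n+1}=\varepsilon_n/2$, the function $f_{\varepsilon_{n+1}}$ equals $1$ on the support of $f_{\varepsilon_n}$, so $f_{\varepsilon_{n+1}}(a)f_{\varepsilon_n}(a)=f_{\varepsilon_n}(a)$; together with \autoref{W(A)_Rmk} (which gives $f_{\varepsilon_n}(a)\precsim_1 f_{\varepsilon_{n+1}}(a)$) this shows $(f_{\varepsilon_n}(a))_n\in\preS(A)$. I would thus set $\varphi_0([a]_\mathrm{Cu})=[(f_{\varepsilon_n}(a))_n]\in\SR(A)$. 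That $\varphi_0$ is well defined, order preserving and additive follows from \autoref{lma:varepsilon}: if $a\precsim_\mathrm{Cu} b$ then for each $n$ there is $m$ with $f_{\varepsilon_n}(a)\precsim_1 f_{\varepsilon_m}(b)$, which is precisely $(f_{\varepsilon_n}(a))\precsim(f_{\varepsilon_n}(b))$; additivity uses $f_{\varepsilon_n}(a\oplus b)=f_{\varepsilon_n}(a)\oplus f_{\varepsilon_n}(b)$. To apply \autoref{lma:extensions} I must check that $\varphi_0$ preserves suprema of increasing sequences that stay inside $H$: given $\sup_k[a_k]_\mathrm{Cu}=[a]_\mathrm{Cu}$ in $H$, monotonicity gives one inequality, and for the other I use $[(a-\varepsilon_{n+1})_+]_\mathrm{Cu}\ll[a]_\mathrm{Cu}$ to find $k$ with $(a-\varepsilon_{n+1})_+\precsim_\mathrm{Cu} a_k$, and then a support computation together with \autoref{lma:varepsilon} to produce $m$ with $f_{\varepsilon_n}(a)\precsim_1 f_{\varepsilon_m}(a_k)$. \autoref{lma:extensions} then yields $\varphi\colon\Cu(A)\to\SR(A)$ extending $\varphi_0$ and preserving suprema.

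For compact containment I would show that $\varphi$ preserves even the stronger relation $\prec$ of \autoref{rmk:auxiliary_Lamdaseq}. If $[a]_\mathrm{Cu}\ll[b]_\mathrm{Cu}$ with representatives in $M_\infty(A)_+$, then $a\precsim_\mathrm{Cu}(b-\varepsilon)_+$ for some $\varepsilon>0$; \autoref{lma:varepsilon} bounds each $f_{\varepsilon_n}(a)$ by some $f_{\varepsilon_{m_n}}((b-\varepsilon)_+)$, and a single support estimate shows $f_{\varepsilon_j}((b-\varepsilon)_+)\precsim_1 f_{\varepsilon_m}(b)$ for \emph{every} $j$ once $\varepsilon_m<\varepsilon$. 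Hence one $m$ works for all $n$, giving $\varphi_0([a])\prec\varphi_0([b])$ and thus $\ll$; the general case follows by writing $[b]_\mathrm{Cu}$ as a supremum of a rapidly increasing sequence in $H$ and using that $\ll$ is absorbed by $\le$. For the inverse direction I define $\phi([(x_n)_n])=\sup_n[x_n^*x_n]_\mathrm{Cu}$, the supremum existing by \autoref{Cucls_Rmk} and the fact that $\Cu(A)$ is a $\Cu$-semigroup; well-definedness, additivity and preservation of suprema are routine, the last using the diagonal description of suprema in $\SR(A)$ from \autoref{W(R)_closed}. Finally $\phi\varphi=\id$ holds on $H$ since $\phi\varphi_0([a])=\sup_n[f_{\varepsilon_n}(a)^2]_\mathrm{Cu}=\sup_n[f_{\varepsilon_n}(a)]_\mathrm{Cu}=[a]_\mathrm{Cu}$ (using $x\sim_\mathrm{Cu}x^2$ and $[a]_\mathrm{Cu}=\sup_n[(a-\varepsilon_n)_+]_\mathrm{Cu}$), and extends to all of $\Cu(A)$ because both $\phi$ and $\varphi$ preserve suprema and $H$ is dense.

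The main obstacle is the mismatch between $M_\infty(A)$ and $A\otimes\cK$: $f_{\varepsilon_n}(a)$ lies in $M_\infty(A)$ only when $a$ does, which forces the detour through the dense subsemigroup $H$ and the extension \autoref{lma:extensions}, and makes the verification of suprema preservation (rather than mere monotonicity) the delicate point. The recurring technical subtlety is that each use of \autoref{lma:varepsilon} costs an extra $f_\varepsilon$, so the support estimates must be arranged (choosing indices like $n+2$ rather than $n+1$, and $\varepsilon_m<\varepsilon$) so that this loss is absorbed and one still recovers a genuine $\precsim_1$-comparison of the intended terms.
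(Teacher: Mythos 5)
Your proposal follows essentially the same route as the paper's proof: the same dense subsemigroup $H$ of classes from $M_\infty(A)_+$, the same map $\varphi_0([a])=[(f_{\varepsilon_n}(a))_n]$ extended via \autoref{lma:extensions}, the same use of \autoref{lma:varepsilon} for well-definedness, suprema, and compact containment, and the same left inverse $\phi([(x_n)])=\sup_n[x_n^*x_n]$. The argument is correct and matches the paper's in all essential respects.
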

\begin{proof}
	Let $H=\{ x\in \Cu (A) \colon x=[a]\text{ for some }a\in M_{\infty}(A)_+ \}$, which is a dense subsemigroup of $\Cu  (A)$. We will apply  \autoref{lma:extensions} to $H$. To this end we need to construct a positively ordered monoid morphism $\varphi_0\colon H\to \SR(A)$ that preserves suprema of increasing sequences.
	
	Let $x\in H$, and let $a\in M_\infty(A)_+$ be such that $x=[a]$. For every $n\geq 1$, put $\varepsilon_n=1/2^n$ and $f_{\varepsilon_n}$ as in  \autoref{lma:varepsilon}. The elements of the sequence $(f_{\varepsilon_n}(a))$ are pairwise commuting and, as observed before, $f_{\varepsilon_{n+1}}(a)f_{\varepsilon_n}(a)=f_{\varepsilon_n}(a)$ for all $n$. Thus $f_{\varepsilon_n}(a)\precsim_1 f_{\varepsilon_{n+1}}(a)$ for all $n$, and so $(f_{\varepsilon_n}(a))_n\in\preS(M_{\infty}(A))$.

	Define $\varphi_0\colon H\rightarrow \SR (A)$ by $\varphi_0([a])=[(f_{\varepsilon_{n}}(a))_{n}]$. To see that $\varphi_0$ is well defined and order-preserving, let $b\in M_\infty(A)_+$ be such that $a\precsim b$ in $A\otimes\mathcal{K}$. By \autoref{lma:varepsilon}, for each $n$ there is $m$ with $f_{\varepsilon_n}(a)\precsim_1 f_{\varepsilon_{m}}(b)$, which implies that $[(f_{\varepsilon_n}(a))_n]\leq [(f_{\varepsilon_n}(b))_n]$.
	
	It is easy to verify that $\varphi_0$ is additive, and thus it is a positively ordered monoid morphism. In order to extend $\varphi_0$ to a positively ordered monoid morphism $\varphi\colon\Cu (A)\to \SR (A)$ that preserves suprema of increasing sequences, we apply \autoref{lma:extensions}. Thus, we only need to prove that the map $\varphi_0\colon H\to \SR (A)$ preserves suprema of increasing sequences with supremum in $H$.
	
	To this end, let $(a_{n})_{n},a\in M_{\infty}(A)_+$ be such that $([a_{n}])_n$ is increasing and $[a]=\sup_{n}[a_{n}]$ in $\Cu (A)$. Since $\varphi_0$ is order-preserving, we already have $\sup_n \varphi_0 ([a_n])\leq \varphi_0 ([a])$. To show the converse inequality, note that for every  $n\geq 1$ one has $[(a-\varepsilon_{n+1})_+]\ll [a]$ and, consequently, $(a-\varepsilon_{n+1})_+\precsim a_{i}$ for some $i$. By  \autoref{lma:varepsilon}, there is $m$ such that $f_{\varepsilon_{n+2}}((a-\varepsilon_{n+1})_+)\precsim_1 f_{\varepsilon_m}(a_i)$. Let $g_{\varepsilon_n}(t)=(t-\varepsilon_n)_+$. By \autoref{W(A)_Rmk}, we have $f_{\varepsilon_n}\precsim_1 f_{\varepsilon_{n+2}}\circ g_{\varepsilon_{n+1}}$. Thus,
	\[
	f_{\varepsilon_n}(a)\precsim_1 f_{\varepsilon_{n+2}}((a-\varepsilon_{n+1})_+)\precsim_1 f_{\varepsilon_m}(a_i).
	\]
	This shows that $\varphi_0 ([a])\leq \sup_n \varphi_0 ([a_n])$, as every element in $(f_{\varepsilon_{n}}(a))_n$ is $\precsim_1$-bounded by an element in $(f_{\varepsilon_{m}}(a_i))_m$ for some $i$. Therefore, $\varphi_0([a])=\sup_n\varphi_0([a_n])$, as was to be shown, and we have an extension $\varphi\colon \Cu(A)\to \SR(A)$ that preserves suprema of increasing sequences.
	
	Next, to prove that $\varphi$ preserves the way-below relation, take first $[a],[b]\in \Cu(A)$ with $a,b\in M_{\infty}(A)_+$ and suppose that $[a]\ll [b]$ in $\Cu (A)$. Then there is $\varepsilon>0$ such that $a\precsim (b-\varepsilon)_+$. Since again by \autoref{W(A)_Rmk}, we have $f_{\varepsilon_m}\circ g_\varepsilon\precsim_1 f_\varepsilon$ for each $m$, another usage of \autoref{lma:varepsilon} implies that, for each $n\geq 1$, there is $m\geq 1$ such that
	\[
	f_{\varepsilon_n}(a)\precsim_1 f_{\varepsilon_m}((b-\varepsilon)_+)\precsim_1 f_\varepsilon(b),
	\]
	and therefore $\varphi_0 ([a])=[(f_{\varepsilon_n}(a))_n]\ll [(f_{\varepsilon_m}(b))_m]=\varphi_0 ([b])$. If now $a,b\in (A\otimes\mathcal{K})_+$ satisfy $[a]\ll [b]$ in $\Cu(A)$, then as before there is $\varepsilon>0$ such that $a\precsim (b-\varepsilon)_+$. Note that there is $b_\varepsilon\in M_\infty(A)_+$ such that $(b-\varepsilon)_+\sim b_\varepsilon$, and since $[b_\varepsilon]\ll [b_{2\varepsilon}]$, we have 
	\[
	\varphi([a])\leq \varphi([(b-\varepsilon)_+])=\varphi_0([b_\varepsilon])\ll\varphi_0([b_{2\varepsilon}])\leq \varphi([b]).
	\]
	To finish the proof, we have to construct an ordered monoid morphism $\phi\colon \SR (A)\to \Cu (A)$ that preserves suprema of increasing sequences and is a left inverse for $\varphi$. To do this, let $(a_{n})_{n}\in\preS (M_{\infty}(A))$. By \autoref{Cucls_Rmk}, the sequence $(a^{*}_{n}a_{n})_{n}$ is $\precsim$-increasing and we can consider $[a]=\sup_{n}[a^{*}_{n}a_{n}]$ in $ \Cu (A)$. 
	
	Define $\phi\colon \SR (A)\rightarrow \Cu (A)$ by $\phi([(a_{n})_{n}])= \sup_{n}[a^{*}_{n}a_{n}]$. Let $[(a_n)], [(b_n)]\in \SR(A)$ be such that $[(a_{n})_{n}]\leq [(b_{n})_{n}]$ in $\SR (A)$. Then, for each $n$, there is $m$ such that $a_n\precsim_{1}b_{m}$. Again by \autoref{Cucls_Rmk}, this implies that $a^{*}_{n}a_{n}\precsim b^{*}_{m}b_{m}$. Therefore, if we put $[a]=\sup_{n}[a^{*}_{n}a_{n}]$ and $[b]=\sup_{n}[b^{*}_{n}b_{n}]$ in $\Cu (A)$, we obtain that $[a]\leq [b]$. This shows that $\phi$ is well defined and order-preserving. It is easy to verify that $\phi$ is also additive, hence a positively ordered monoid morphism.
	
	Now, given an increasing sequence $([(a_{i,n})_{i}])_{n}$ in $\SR (A)$, there is a subsequence $(n_i)$ of the natural numbers and elements $r_i\in A$ such that $\sup_n [(a_{i,n})_i]=[(a_{i,n_{i}}r_{i})_{i}]$ (see \autoref{W(R)_closed}). Let $[a_{n}]=\phi ((a_{i,n})_{i})$. We have, for each $i$,
	\begin{equation*}
		r^{*}_{i}a^{*}_{i,n_{i}}a_{i,n_{i}}r_{i}\precsim a^{*}_{i,n_{i}}a_{i,n_{i}}\precsim a_{n_i},
	\end{equation*}
	and thus $[r^{*}_{i}a^{*}_{i,n_{i}}a_{i,n_{i}}r_{i}]\leq [a_{n_i}]\leq \sup_n [a_n]$. Therefore
	\[
	\phi(\sup_n [(a_{i,n})_i])=\phi([(a_{i,n_{i}}r_{i})_{i}])=\sup_i[r^{*}_{i}a^{*}_{i,n_{i}}a_{i,n_{i}}r_{i}]\leq \sup_n [a_n]=\sup_n\phi([(a_{i,n})_i]).
	\]
	Since $\phi$ is order-preserving we always have $\sup_{n}\phi([(a_{i,n})_{i}])\leq \phi(\sup_{n}[(a_{i,n})_{i}])$, and thus $\phi(\sup_n [(a_{i,n})_i])=\sup_n\phi([(a_{i,n})_i]$, as required.
	
	By construction, $\phi$ is a left-inverse for $\varphi_0=\varphi|_{H}$. By definition of $\varphi$ and since $\phi$ preserves suprema of increasing sequences, it follows that $\phi$ is a left-inverse for~$\varphi$.
\end{proof}

%=========================================================
\begin{parag}[Hilbert $\mathrm{C}^*$-modules]\label{para:HilbCMod}
	For a \Cs{} $A$, we consider the class $\preCH (A)$ of countably generated Hilbert $A$-modules, see for instance \cite{MT05} for definitions and background. Let $H_A$ be the Hilbert $A$-module consisting of sequences 
	$(a_n)$ of elements in $A$ such that $\sum_{n=1}^{\infty} a_n^*a_n$ is norm-converging in $A$. Note that $H_A$ is the 
	Hilbert $A$-module completion of the $A$-module $A^{(\N)}$.
	By Kasparov's Theorem (see e.g. \cite[Theorem 1.4.2]{MT05}) each countably generated Hilbert $A$-module is 
	isometrically isomorphic to a complemented $A$-submodule of $H_A$.  
	
	Denote by $\mathcal K (X)$ the \Cs{} of compact operators on a Hilbert $A$-module $X$.
	If $X\subseteq Y$ are 
	Hilbert $A$-modules, we say that $X$ is {\it compactly contained} in $Y$ if there exists a self-adjoint compact operator 
	$\theta \in \mathcal K (Y)$ such that $\theta|_X = \text{id}_X$. Given Hilbert $A$-modules 
	$X$ and $Y$, we say that $X$ is {\it Cuntz subequivalent}
	to $Y$, written $X\precsim Y$, if each Hilbert submodule $X_0$ of $X$ which is compactly contained in $X$ is isometrically isomorphic to a Hilbert module $Y_0$ which is compactly contained in $Y$. We say that $X$ and $Y$ are {\it Cuntz equivalent}, 
	written $X\sim Y$, if $X\precsim Y$ and $Y\precsim X$. The semigroup $\CH (A)$ is then the semigroup of Cuntz 
	equivalence classes of countably generated Hilbert $A$-modules, endowed with the operation induced by the direct 
	sum of Hilbert $A$-modules. 
	
	It was shown in \cite{Coward2008} that there is an isomorphism $\Cu (A)\cong \CH (A)$ in the category $\Cu$. This isomorphism 
	sends the class of a positive element $a$ in $A\otimes \mathcal K$ to $\ol{a(H_A)}$, where we use the isomorphism 
	$A\otimes \mathcal K \cong \mathcal K (H_A)$, see e.g. \cite[Proposition 3.15(iii)]{APT2011}.
	
	Let $A$ be a \Cs{}. Then we have, on the one hand, an isomorphism $\gamma_c\colon \Cu (A) \cong \CH (A)$,
	and on the other hand an isomorphism  
	$\gamma_a\colon \SR (A) \cong \CP (A)$ by \autoref{thm:nonunitalSRCP}, where $\CP(A)$ is built from the 
	category $\preCP (A)$ of countably generated projective unital right $A^+$-modules $P$ such that $P=PA$, 
	see \autoref{para:CPRNonUni}. 
	
	Hence there is a unique morphism $\tilde{\phi} \colon \CP (A)\to \CH (A)$ making commutative the following diagram:
	\[\xymatrix{
		\Cu (A) \ar@{->}[r]^{\gamma_c} & \CH (A)   \\
		\SR (A)  \ar@{->}[r]^{\gamma_a} \ar@{->}[u]^{\phi}  & \CP (A) \ar@{->}[u]^{\tilde{\phi}} } \]
	namely $\tilde{\phi} = \gamma_c\circ \phi \circ \gamma_a^{-1}$.
\end{parag}

\begin{proposition}
	\label{prop:closure-is-OK}
	Let $P$ be an object in $\CP (A)$, and let
	$x_n \in M_{\infty}(A)$ be a sequence such that $x_{n+1}x_n = x_n$ for each $n\ge 1$ and $P\cong Q:=\bigcup_{i=1}^{\infty} x_nA^{(\N)}$. We then have 
	$$\tilde{\phi} ([P]) = \tilde{\phi} ([Q])= [\ol{Q}],$$
	where $\ol{Q}$ is the Hilbert $A$-module obtained by taking the closure of $Q$ in $H_A$.  
\end{proposition}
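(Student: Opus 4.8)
The plan is to compute $\tilde{\phi}([Q])$ by unwinding the definition $\tilde{\phi}=\gamma_c\circ\phi\circ\gamma_a^{-1}$ given in \autoref{para:HilbCMod}. Since $P\cong Q$ we have $[P]=[Q]$ in $\CP(A)$, so $\tilde{\phi}([P])=\tilde{\phi}([Q])$ and it suffices to identify $\tilde{\phi}([Q])$ with $[\overline{Q}]$. The strategy is to trace the class $[Q]$ through the three maps, using the explicit descriptions available for each: $\gamma_a^{-1}$ from \autoref{thm:nonunitalSRCP}, the formula for $\phi$ from \autoref{thm:CuCs}, and the module picture of $\gamma_c$ from \cite{Coward2008}.

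First I would identify $\gamma_a^{-1}([Q])$. By the construction underlying \autoref{thm:nonunitalSRCP} (see also \autoref{cor:special-rep}), under $\gamma_a$ the class $[(x_n)]\in\SR(A)$ of a sequence satisfying $x_{n+1}x_n=x_n$ corresponds precisely to the class of the module $\bigcup_n x_nA^{(\N)}$; hence $\gamma_a^{-1}([Q])=[(x_n)]$. Applying the map $\phi$ of \autoref{thm:CuCs}, whose definition is $\phi([(a_n)_n])=\sup_n[a_n^*a_n]$, I obtain
\[
\phi(\gamma_a^{-1}([Q]))=\sup_n[x_n^*x_n]\in\Cu(A).
\]
Here the supremum exists because $x_n=x_{n+1}x_n$ forces $x_n^*x_n=x_n^*(x_{n+1}^*x_{n+1})x_n\precsim_\mathrm{Cu} x_{n+1}^*x_{n+1}$, so that $([x_n^*x_n])_n$ is increasing.

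Next I would push this through $\gamma_c$, which sends the class of a positive element $a$ to $[\overline{a(H_A)}]$ and, being a $\Cu$-isomorphism, preserves suprema of increasing sequences. The key pointwise identity is
\[
\gamma_c([x_n^*x_n])=[\overline{x_n^*x_n(H_A)}]=[\overline{x_n(H_A)}],
\]
which follows from two standard facts: $\overline{x_n^*x_n(H_A)}=\overline{(x_n^*x_n)^{1/2}(H_A)}$, and the assignment $(x_n^*x_n)^{1/2}\xi\mapsto x_n\xi$ is isometric, since $\langle x_n\xi,x_n\xi\rangle=\langle\xi,x_n^*x_n\xi\rangle=\langle (x_n^*x_n)^{1/2}\xi,(x_n^*x_n)^{1/2}\xi\rangle$, and hence extends to an isometric isomorphism $\overline{(x_n^*x_n)^{1/2}(H_A)}\cong\overline{x_n(H_A)}$. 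Consequently
\[
\tilde{\phi}([Q])=\gamma_c\Big(\sup_n[x_n^*x_n]\Big)=\sup_n[\overline{x_n(H_A)}].
\]

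Finally I would identify this supremum with $[\overline{Q}]$. From $x_n=x_{n+1}x_n$ we get $x_n(H_A)=x_{n+1}x_n(H_A)\subseteq x_{n+1}(H_A)$, so the submodules $Y_n:=\overline{x_n(H_A)}$ form an increasing chain inside $H_A$; moreover, as $A^{(\N)}$ is dense in $H_A$ one has $\overline{x_n(H_A)}=\overline{x_nA^{(\N)}}$, whence $\overline{\bigcup_nY_n}=\overline{\bigcup_nx_nA^{(\N)}}=\overline{Q}$. The remaining point, which I expect to be the main obstacle, is that in $\CH(A)$ the supremum of an increasing sequence of Hilbert submodules of a fixed module equals the closure of their union, i.e. $\sup_n[Y_n]=[\overline{\bigcup_nY_n}]$. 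The upper-bound inequality is immediate from $Y_n\subseteq\overline{Q}$, but minimality requires the genuine Hilbert-module input: one must show that any submodule compactly contained in $\overline{Q}=\overline{\bigcup_nY_n}$ is already compactly contained in some $Y_n$, so that (via $[Y_n]\le[W]$ for any upper bound $[W]$) it is isometrically isomorphic to a submodule compactly contained in $W$. This is extracted from the compact-containment machinery developed in \cite{Coward2008}. Combining the displays then gives $\tilde{\phi}([Q])=[\overline{Q}]$, and together with $\tilde{\phi}([P])=\tilde{\phi}([Q])$ the proposition follows.
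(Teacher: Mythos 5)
Your proposal is correct and follows essentially the same route as the paper: unwind $\tilde{\phi}=\gamma_c\circ\phi\circ\gamma_a^{-1}$, identify $\gamma_c([x_n^*x_n])$ with $[\ol{x_nH_A}]$, and use that $\gamma_c$ preserves suprema together with the fact that the supremum of the increasing chain $[\ol{x_nH_A}]$ is the class of the closure of the union. The two points you argue or defer by hand are exactly the ones the paper disposes of by citation, namely $\ol{x_n^*x_nH_A}\cong\ol{x_nH_A}$ via \cite[Lemma 4.10]{APT2011} and $\sup_n[\ol{x_nH_A}]=[\ol{\cup_n\ol{x_nH_A}}]$ via \cite[Proposition 4.12]{APT2011}.
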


\begin{proof}
	We first observe that a sequence $(x_n)$ as in the statement always exists 
	by \autoref{cor:special-rep}. Given such a sequence $(x_n)$, we have $x_n H_A \subseteq x_{n+1} H_A$ 
	and in particular $\ol{x_nH_A}\subseteq \ol{x_{n+1}H_A}$, so that by \cite[Proposition 4.12]{APT2011} we have
	$$\sup_n [\ol{x_nH_A}] = [\ol{\cup _{n=1}^{\infty} \ol{x_nH_A}}]$$
	in $\CH (A)$.
	On the other hand, 	by \cite[Lemma 4.10]{APT2011}, we have
	$$\ol{x_n^*x_nH_A} \cong \ol{x_nx_n^*H_A} = \ol{x_nH_A}$$ 
	for all $n\ge 1$. Therefore we get	$\gamma_c ([x_n^*x_n]) = [\ol{x_n H_A}]$. Using that $\gamma_c $ preserves suprema of increasing     
	sequences, we have
	\begin{align*}
		\gamma_c\circ \phi \circ \gamma_a^{-1} ([Q]) & = \gamma_c \circ \phi ([(x_n)]) 
		= \gamma_c (\sup_n [x_n^*x_n]) = \sup_n \gamma_c ([x_n^*x_n]) \\
		& = \sup_n [\ol{x_nH_A}] 
		=  [\ol{\cup _{n=1}^{\infty} \ol{x_nH_A}}] 
		= [\ol{Q}],
	\end{align*}	
	as desired.
\end{proof}

%=========================================================
\section{Nearly simple domains}
\label{sec:nearly}

In this section we study nearly simple domains, a class of rings where one can explicitly compute the monoid $\W (R)$; see \autoref{parag:J1Simp}.

As we will prove in \autoref{prop}, the Jacobson radical $J$ of any nearly simple domain $R$ is always weakly $s$-unital, although it is not $s$-unital in general. The invariants $\RLambda (J)$ and $\SR (J)$ are computed in \autoref{thm:computeSJ} and \autoref{rmk:WJ} respectively.

%=========================================================
\begin{parag}[Uniserial domains and nearly simple domains]
	\label{parag:J1Simp} Recall that a module over a ring $R$ is \emph{uniserial} if its submodules are totally ordered by inclusion, and that the ring $R$ is said to be \emph{right uniserial} if it is uniserial as a right module over itself.
	
	One defines left uniserial rings analoguously, and says that $R$ is \emph{uniserial} if it is both right and left uniserial. Uniserial rings will be assumed to be unital throughout the section.
	
	Note that any right uniserial domain $R$ is a local ring. That is, $R$ has a unique maximal left ideal. The reader is referred to \cite{Fac1998} for a thorough exposition.
	
	Let $R$ be a uniserial domain. We will say that $R$ is a \emph{nearly simple domain} if $R$ is not simple and the only two-sided ideals of $R$ are $\{0\}$, $J(R)$ and $R$. 
	
	Given elements $r,s$ in a unital, uniserial ring $R$, it is well-known that $RrR =RsR$ if and only if there exist units $u,v\in R$ such that $r=usv$; see \cite[Lemma~4.2]{Pun2001}.
	
	In particular, if $R$ is a nearly simple domain, this implies that $J:=J(R)$ is a \emph{$1$-simple ring}, that is, for each $r,s\in J\setminus \{0\}$ there exist $a,b\in J$ such that $r=asb$ (see \cite{Cohn05}, where the concept of an $n$-simple ring is introduced for unital rings, for every $n\ge 1$). Indeed, applying \cite[Lemma~4.2]{Pun2001} to $r,s^3$, we obtain units $u,v\in R$ such that 
	\[
	r=us^3 v=(us)s(sv).
	\]
	The elements $a:=us$ and $b:=vs$ are in $J$ and satisfy the desired equality.
\end{parag}

%=========================================================

As shown in \cite[Theorem 2.10]{AGOR2000}, every regular square matrix over an exchange separative ring can be diagonalized by using row and column elementary transformations. Since local rings are separative exchange rings, this applies in particular to any uniserial ring. We will see in \autoref{lem:matricesovernearlysimple} below that {\it all}  square matrices over a uniserial ring are equivalent to diagonal matrices. This will allow us to compute $\W (R)$ for a nearly simple domain $R$ in \autoref{thm:computeSJ}. Note that there exist artinian local commutative rings $R$ such that some $2\times 2$ matrices over $R$ are not diagonalizable (see \cite[Remark 2.12]{AGOR2000}).  

Let us denote by $E_n (R)$ the set of $n\times n$ elementary matrices.

\begin{lemma}\label{lem:matricesovernearlysimple}
	Let $R$ be a uniserial ring. Then, for each square matrix $A\in M_n(R)$ there exist elementary matrices $U,V\in E_n(R)$ such that $UAV$ is diagonal.
\end{lemma}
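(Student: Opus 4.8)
The plan is to argue by induction on $n$, the base case $n=1$ being vacuous. For the inductive step it suffices to show that, after elementary row and column operations, $A$ can be brought to a block form $\left(\begin{smallmatrix} g & 0 \\ 0 & A'\end{smallmatrix}\right)$ with $g\in R$ and $A'\in M_{n-1}(R)$, since the inductive hypothesis applied to $A'$ then finishes the proof. Thus the whole problem reduces to the claim that, using elementary operations, one can place at the $(1,1)$ position an entry $g$ that simultaneously \emph{left-divides and right-divides} every entry of the matrix. Granting such a $g$, every first-column entry $a_{k1}$ lies in $Rg$ and every first-row entry $a_{1l}$ lies in $gR$, so the transvections $R_k\mapsto R_k-s_kR_1$ (with $a_{k1}=s_kg$) clear the first column, and afterwards the transvections $C_l\mapsto C_l-C_1t_l$ (with $a_{1l}=gt_l$) clear the first row; because the cleared column contributes only zeros below $g$, these column operations do not disturb the zeros just created.

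The construction of this universal divisor $g$ is where uniseriality enters decisively. Since $R$ is left uniserial, the left ideals $Ra_{kl}$ generated by the finitely many entries form a chain, so some entry $a$ realizes the largest one; dually, right uniseriality produces an entry $b$ realizing the largest right ideal $bR$. From maximality one reads off that $a$ left-divides $b$ (as $Rb\subseteq Ra$, so $b=ra$) and that $b$ right-divides $a$ (as $aR\subseteq bR$, so $a=bc$). In the non-commutative setting $a$ and $b$ need not be the same element, but combining the two inclusions gives $RaR=RbR$, whereupon the result of Puninski quoted above ($RrR=RsR$ forces $r=usv$ with $u,v$ units in a uniserial ring) shows that $a$ and $b$ are associated. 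Exploiting this, one adjusts the left-maximal pivot by a unit so that its right ideal becomes maximal as well, producing an entry $g$ that divides every entry on both sides.

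The main obstacle is precisely the coordination of the two one-sided clearings: a unit scaling that enlarges the right ideal of the pivot also rescales the remaining entries in its row or column, so one must order the operations carefully — first secure left-divisibility by a permutation and a column clearing, then repair right-divisibility by a unit scaling of the pivot \emph{row}, which leaves the already-cleared column untouched — and verify that no previously created zero is destroyed. I expect the delicate point to be showing that this alternation genuinely terminates with a true universal divisor rather than looping indefinitely; the identity $RaR=RbR$ together with the unit lemma is exactly what should force stabilization. As a consistency check, the diagonalization of \emph{regular} matrices over separative exchange rings recalled above already covers the nonsingular case, so the new content lies in the singular matrices, and the fact that the stronger uniserial hypothesis cannot be dispensed with is confirmed by the local ring carrying a non-diagonalizable $2\times 2$ matrix noted earlier.
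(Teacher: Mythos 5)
Your overall frame (induction on $n$, reduce to producing a pivot $g$ at position $(1,1)$ that left-divides its column and right-divides its row, then clear column first and row second) is sound, and the observation that a left-maximal entry $a$ and a right-maximal entry $b$ satisfy $RaR=RbR$, hence $a=ubv$ for units $u,v$, is correct. But the proof has a genuine gap exactly where you flag it: you never actually produce such a pivot by \emph{elementary} operations. The element $g=u^{-1}a=bv$ does left- and right-divide every entry, but it is not an entry of the matrix. Scaling one row (or column) by a unit is not an elementary operation in general (it changes the class in $\mathrm{K}_1$), and the Whitehead-style substitute $\mathrm{diag}(u^{-1},u,1,\dots,1)$ rescales a second row, after which the needed divisibility relations can fail: if you replace the first row by $u^{-1}a_{1l}$, there is no reason why $u^{-1}a_{1l}R\subseteq bvR$, since $u^{-1}b R$ and $bR$ need not compare the right way. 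Your fallback — an alternation of one-sided clearings whose termination is ``expected'' to follow from $RaR=RbR$ — is precisely the unproved core; no stabilization argument is given, and it is not clear one exists along these lines.

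The paper's proof sidesteps the universal-divisor idea entirely. After disposing of the case where some entry is a unit, it assumes $A\in M_n(J)$ and runs a secondary induction putting the first $k$ rows into diagonal form. When processing row $k+1$ it first kills those entries $B_k(k+1,i)$, $i\le k$, that are left multiples of the diagonal entries $B_k(i,i)$; it then takes the entry of row $k+1$ generating the largest right ideal. If that entry lies in a column beyond $k$, it is moved to the diagonal and the rest of the row is cleared by column operations (which cannot disturb the earlier rows, since those vanish off the diagonal). If instead it lies in a column $k'\le k$, then it left-divides $B_k(k',k')$, so one kills $B_k(k',k')$ by a row transvection, clears row $k+1$ outside column $k'$, swaps rows $k'$ and $k+1$, and observes that the new working row has strictly more zeros than before — a tertiary induction on the number of zeros then terminates the process. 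Note that the unit lemma $RrR=RsR\iff r=usv$ plays no role in this argument (the paper uses it elsewhere, to show $J$ is $1$-simple); only the comparability of principal one-sided ideals is needed. To complete your approach you would either have to supply the missing termination argument for your alternation, or adopt a row-by-row bookkeeping of this kind.
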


\begin{proof}
	We proceed by induction on $n$ and note that the  case $n=1$ is trivial. 
	
	Thus, let $n> 1$ be fixed and assume that we have proven the result for every $k\times k$ matrix with $k\leq n-1$.
	
	Take $A\in M_n (R)$. If $A$ has an invertible entry, we can move such entry to the position $(1,1)$ by means of elementary transformations. Further, since this entry is now invertible, there exist elementary matrices $U,V$ such that the product $A'=UAV$ satisfies $A'(1,i)= A' (i,1)= 0$ for all $i>1$. The desired result now follows by induction.
	
	Thus, it remains to consider the case $A\in M_n (J)$, where we will show by induction on $k$ that there exist elementary matrices $U_k, V_k\in E_n (R)$ such that the product 
	\[
	B_k = U_k A V_k
	\]
	satisfies $B_k (i,j)=0$ for every pair $(i,j)$ such that $i\leq k$ and $i\neq j$. That is, $B_k$ is of the form 
	\[
	B_k=
	\left(\begin{array}{c}
		\begin{array}{ccc|ccc}
			B_k (1,1) & & 0 & & & \\
			& \ddots & & & 0 & \\
			0 & & B_k (k,k) & & &
		\end{array}
		\\ \hline
		\\
		C_k
		\\
	\end{array}
	\right )
	\]
	for some matrix $C_k$.
	
	If $k=1$, use that $R$ is uniserial to find $i\geq 1$ such that $A(1,j)R\subseteq A(1,i)R$ for every $j$. Using elementary transformations, we may assume that $i=1$. This shows that $A$ can be transformed into a matrix $B_1$ satisfying the required conditions. 
	
	Now fix $k< n$ and assume that we have proven the result for every $k'\leq k$. In particular, we can find $U_k, V_k\in E_k (R)$ such that $U_k A V_k = B_k$.
	
	Then, for every $i\leq k$, we either have that $RB_k (k+1, i)\subseteq RB_k (i,i)$ or $RB_k (i,i)\subseteq RB_k (k+1, i)$. Performing elementary row operations, we may assume that $B_k (k+1, i)=0$ whenever $RB_k (k+1, i)\subseteq RB_k (i,i)$.
	
	Let $k'$ be such that $B_k (k+1, i)R\subseteq B_k (k+1,k')R$ for every $i$. We may assume that $B_k (k+1, k') \neq 0$, since we are done otherwise.
	
	If $k'\geq k+1$, we can perform elementary column operations in order to get $k'=k+1$. Using once again column operations, we obtain a matrix of the form 
	\[
	\left(\begin{array}{c}
		\begin{array}{cccc|ccc}
			B_k (1,1) & &  & 0 & & & \\
			& \ddots & & & & 0 & \\
			& & B_k (k,k) & & & &\\
			0 & & & B_k (k+1,k+1) & & &
		\end{array}
		\\ \hline
		\\
		C_{k+1}
		\\
	\end{array}
	\right )
	\]
	for some $C_{k+1}$, as desired.
	
	Finally, assume that $k'\leq k$. Since $B_k (k+1, k')\neq 0$, we have
	\[
	RB_k (k',k')\subseteq RB_k (k+1, k').
	\]
	
	Let $B'$ be the matrix resulting from adding a multiple of the $(k+1)$-th row to the $k'$-th row in such a way that $B' (k',k')=0$.
	
	Performing elementary column operations, we obtain yet another matrix $B''$ with $B'' (k+1, i)=0$ for every $i\neq k'$.
	
	Swapping the $k'$-th row with the $(k+1)$-th row, we find a matrix $B_k'$ with $B_k' (i,j)=0$ for every $(i,j)$ such that $i\leq k$ and $i\neq j$. Further, $B_k'$ has at least one more zero than $B_k$ in the $(k+1)$-th row. Proceeding by induction, we get matrices $U_{k+1}$, $V_{k+1}$ and $B_{k+1}$ with the desired properties. This finishes the inductive argument.
	
	Since $B_n$ is a diagonal matrix, the matrices $U:=U_n$ and $V:= V_n$ satisfy the required conditions. This finishes the proof.
\end{proof}

%=========================================================
Let $(M,\le)$ be a partially ordered monoid, and let $I$ be a submonoid of $M$. Recall that $I$ is said to be an \emph{o-ideal} of $M$ if $I$ is hereditary for $\le$, that is, if whenever $x\le y$ with $y\in I$ we have $x\in I$.%; see, for example, \cite{AraGooOmPar98}.

\begin{proposition}\label{prop} 
	Let $R$ be a nearly simple domain. Then $J(R)$ is a weakly $s$-unital ring,
	and there is an order-embedding of $\W (J(R))$ into an o-ideal of $\W (R)$. 
\end{proposition}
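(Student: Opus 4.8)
The plan is to prove the two assertions in order, since the first is needed for the second. First I would establish that $J:=J(R)$ is weakly $s$-unital. Fix $n\ge 1$ and $a\in M_n(J)$. By \autoref{lem:matricesovernearlysimple} there are elementary matrices $U,V\in E_n(R)$ such that $D:=UaV$ is diagonal; since $M_n(J)$ is a two-sided ideal of $M_n(R)$ and $U,V$ are invertible over $R$, we have $D\in M_n(J)$, say $D=\mathrm{diag}(d_1,\dots,d_n)$ with all $d_i\in J$. Now I would invoke the $1$-simplicity of $J$ (established in \autoref{parag:J1Simp}): for each nonzero $d_i$ choose $p_i,q_i\in J$ with $d_i=p_id_iq_i$, and set $p_i=q_i=0$ when $d_i=0$. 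Writing $P=\mathrm{diag}(p_i)$ and $Q=\mathrm{diag}(q_i)$ gives $D=PDQ$, whence
\[
a=U^{-1}DV^{-1}=U^{-1}P\,D\,QV^{-1}=U^{-1}P\,(UaV)\,QV^{-1}=(U^{-1}PU)\,a\,(VQV^{-1}).
\]
Because $M_n(J)$ is an ideal of $M_n(R)$, both $b:=U^{-1}PU$ and $c:=VQV^{-1}$ lie in $M_n(J)$, and $a=bac$, as required.

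Next I would construct the embedding. Since $M_\infty(J)$ is a two-sided ideal of $M_\infty(R)$, the inclusion $M_\infty(J)\hookrightarrow M_\infty(R)$ descends to a homomorphism of positively ordered monoids $\iota\colon\W(J)\to\W(R)$, $[a]\mapsto[a]$ (note $\W(J)$ is itself a monoid by \autoref{lma:WRPoM}, using the weak $s$-unitality just proved). The crux is the following claim: for $a,b\in M_\infty(J)$, one has $a\precsim_1 b$ computed in $M_\infty(R)$ if and only if $a\precsim_1 b$ computed in $M_\infty(J)$. The reverse implication is trivial, so only the forward one needs work: given $a=rbt$ with $r,t\in M_\infty(R)$, weak $s$-unitality of $J$ supplies $e,f\in M_\infty(J)$ with $a=eaf$, and then $a=e(rbt)f=(er)b(tf)$ with $er,tf\in M_\infty(J)$ by the ideal property. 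Thus $\precsim_1$ on $M_\infty(J)$ is intrinsic, which immediately shows that $\iota$ reflects the order and is therefore an order-embedding.

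Finally I would verify that the image $\iota(\W(J))=\{[a]\in\W(R):a\in M_\infty(J)\}$ is an o-ideal of $\W(R)$. It is a submonoid, since $a\oplus b\in M_\infty(J)$ whenever $a,b\in M_\infty(J)$ and $[0]$ lies in it; and it is hereditary, since if $[c]\le[a]$ with $a\in M_\infty(J)$ then $c=rat\in M_\infty(J)$ again by the ideal property, so $[c]$ is in the image. I expect the main obstacle to be exactly the forward direction of the $\precsim_1$-comparison, namely pulling the outer multipliers $r,t$ from $M_\infty(R)$ back into $M_\infty(J)$; this is precisely what the weak $s$-unitality of $J$ provides, which is why that statement must be proved first and is the load-bearing ingredient of the argument.
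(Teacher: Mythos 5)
Your proposal is correct and follows essentially the same route as the paper's proof: diagonalize $a$ via \autoref{lem:matricesovernearlysimple}, use the $1$-simplicity of $J$ to write $D=PDQ$ with diagonal $P,Q\in M_n(J)$, conjugate back to get weak $s$-unitality, and then use that weak $s$-unitality to pull the multipliers witnessing $a\precsim_1 b$ in $M_\infty(R)$ into $M_\infty(J)$, which gives the order-embedding; the o-ideal verification via the ideal property of $M_\infty(J)$ is the same routine check the paper leaves to the reader.
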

\begin{proof}
	Set $J:=J(R)$ and take $A\in M_n(J)$ for some $n\ge 1$. We have to show that  there exist matrices $X,Y\in M_n(J)$ such that $A=XAY$. 
	By \autoref{lem:matricesovernearlysimple} there exist elementary matrices $U,V\in E_n(R)$ such that $UAV=D$, where $D$ is diagonal matrix in $M_n(R)$. Note that, since all the entries of $A$ belong to $J$, we have $D\in M_n(J)$.
	
	Further, we know from \autoref{parag:J1Simp} that $J$ is a $1$-simple ring. Thus, we can find diagonal matrices $Z,T\in M_n(J)$ satisfying $D=ZDT$. This implies 
	\[
	A= U^{-1} D V^{-1} = U^{-1} Z D TV^{-1} = (U^{-1}ZU)A(VTV^{-1})
	\]
	and, consequently, the matrices $X:=U^{-1}ZU$ and $Y:=VTV^{-1}$ are in $M_n (J)$ and satisfy $A=XAY$, as desired.
	
	It follows from \autoref{par:WR} that we can form the semigroup $\W (J)$, which is a positively ordered monoid.
	
	The inclusion map $J\to R$ induces a positively ordered monoid-morphism $\W (J)\to \W (R)$. To see that it is an order-embedding, let $A,B\in M_n(J)$ and assume that $A\precsim_1 B$ in $M_n (R)$. Let $P,Q\in M_n (R)$ be such that $A=PBQ$. Using the first part of the proposition, we obtain elements $X,Y\in M_n(J)$ such that $A=XAY$, and hence $A= (XP)B(QY)$.
	
	This shows that $A\precsim_1 B$ in $M_n(J)$ and, therefore, that the map $\W (J)\to \W (R)$ is an order-embedding.
	
	Identifying $\W (J)$ with its image, it is readily checked that $\W (J)$ is an o-ideal of $\W (R)$.
	%això es una propietat general dels ideals weakly s-unitals d'anells weakly %s-unitals.
\end{proof}

%=========================================================

\begin{theorem}\label{thm:computeSJ}
	Let $R$ be a nearly simple domain, and let $J$ be its  Jacobson radical. Then,
	\begin{enumerate}[{\rm(i)}]
		\item $\W (J)\cong \N$, with its usual order.
		\item $\W (R) \cong \N\times \N$, with the order 
		\[
		(r',s')\le (r,s) :\iff r'+s' \le r+s \quad \mathrm{ and }\quad  r'\le r.
		\]
	\end{enumerate}
\end{theorem}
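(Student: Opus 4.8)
The plan is to reduce every matrix to diagonal form using \autoref{lem:matricesovernearlysimple} and then read off its class in $\W$ from the diagonal entries, which over the local ring $R$ fall into exactly three types: zero, a unit, or a nonzero element of $J$. To separate the resulting classes I would use two \emph{dimension functions} in the sense of \autoref{par:dim_func}. First, since $R$ is a uniserial domain its one-sided ideals are totally ordered, so $R$ is a (two-sided) Ore domain and admits a division ring of fractions $D$; assigning to a matrix its rank over $D$ yields an order-preserving monoid morphism $\rho\colon\W(R)\to\N$ (the rank cannot increase under $A=PBQ$, and it is additive on block sums). Second, reduction modulo $J$ gives an order-preserving morphism $\pi\colon\W(R)\to\W(R/J)\cong\N$, where $R/J$ is a division ring because $R$ is local, so that $\W(R/J)\cong\N$ by \autoref{dfn:IntR}.

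For (i): given $A\in M_n(J)$, \autoref{lem:matricesovernearlysimple} produces $U,V\in E_n(R)$ with $UAV=\operatorname{diag}(d_1,\dots,d_n)=:D\in M_n(J)$. Using the identity $A=XAY$ with $X,Y\in M_n(J)$ supplied by \autoref{prop}, I would rewrite $D=(UX)A(YV)$ and $A=(XU^{-1})D(V^{-1}Y)$; since $J$ is a two-sided ideal, all four coefficient matrices lie in $M_n(J)$, so $A\sim_1 D$ \emph{inside} $\W(J)$. Each nonzero $d_i$ is $\sim_1$ to a fixed nonzero $q_0\in J$ with coefficients in $J$ by $1$-simplicity (\autoref{parag:J1Simp}), hence $[D]=sq$, where $q:=[q_0]$ and $s$ is the number of nonzero $d_i$. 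Thus $\W(J)=\{kq\mid k\ge 0\}$, and since $\rho(kq)=k$ (each nonzero entry has rank $1$ over $D$) the assignment $kq\mapsto k$ is an isomorphism; the order matches because $\rho$ is monotone and, conversely, $kq\le k'q$ for $k\le k'$ by padding with zeros.

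For (ii): diagonalizing an arbitrary $A\in M_n(R)$ and using that every unit $u$ satisfies $u\sim_1 1$ while every nonzero element of $J$ has class $q$, one obtains $[A]=rp+sq$ with $p:=[1]$, where $r$ counts the unit entries and $s$ the nonzero $J$-entries; this gives surjectivity of $(r,s)\mapsto rp+sq$. Evaluating on generators gives $\pi(p)=\rho(p)=1$, $\pi(q)=0$ and $\rho(q)=1$, so that $(\pi,\rho)(rp+sq)=(r,r+s)$; this recovers $(r,s)$ and yields injectivity, establishing $\W(R)\cong\N\times\N$ as monoids. For the order, the forward implication is immediate from monotonicity of $\pi$ and $\rho$: if $r'p+s'q\le rp+sq$ then $r'\le r$ and $r'+s'\le r+s$. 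For the reverse, assuming $r'\le r$ and $r'+s'\le r+s$, I would match the $r'$ left units injectively to $r'$ of the $r$ right units (via $1\precsim_1 1$) and the $s'$ left $J$-entries into the remaining $(r-r')+s\ge s'$ right slots (via $q\le p$ and $q\le q$); a block matrix pair $P,Q$ built from these matchings realizes $r'p+s'q\precsim_1 rp+sq$.

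The main obstacle is twofold. The delicate bookkeeping lies in part (i), where the relation $\precsim_1$ for the non-unital ring $J$ forces the witnessing coefficients to lie in $J$ rather than merely in $R$; this is exactly where weak $s$-unitality and $1$-simplicity from \autoref{prop} and \autoref{parag:J1Simp} become indispensable. The conceptual heart is the reverse implication for the order in (ii): one must verify that the two numerical invariants $\pi$ and $\rho$ are not only necessary but also sufficient for comparison, which I would settle through the explicit injective matching of diagonal blocks described above.
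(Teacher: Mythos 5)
Your argument is correct, and while it shares the paper's skeleton (diagonalize via \autoref{lem:matricesovernearlysimple} and read off the number $r$ of unit entries and the number $s$ of nonzero entries in $J$), it replaces the paper's key technical tool by a different one. The paper proves that the pair $(r,s)$ is well defined and monotone under $\precsim_1$ by passing to the cokernel $R^n/AR^n\cong (R/aR)^s\oplus R^{n-r-s}$ and invoking Puninski's theorem on the uniqueness of decompositions of finitely presented modules over uniserial rings into cyclic uniserial summands (used twice: once for well-definedness, once to compare the free ranks of $R^n/BR^n$ and $R^n/(BY)R^n$). You instead observe that a uniserial domain is an Ore domain and use the pair of integer-valued dimension functions $\pi=\mathrm{rank}_{R/J}$ and $\rho=\mathrm{rank}_D$, with $D$ the quotient division ring; since $(\pi,\rho)(A)=(r,r+s)$ is intrinsic to $A$, and both functions are monotone under $\precsim_1$ and additive on diagonal sums, well-definedness and the forward order implication come for free, and no module theory beyond the diagonalization lemma is needed. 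What your route buys is economy---the Sylvester-type rank $\rho$ does in one line what Puninski's theorem does in the paper---at the modest cost of invoking the Ore localization; what the paper's route buys is independence from the existence of $D$ and a description of the invariant in terms of the module $R^n/AR^n$. The reverse order implication (building explicit witnesses $P,Q$ by matching diagonal entries, using $1\precsim_1 1$, $q_0\precsim_1 1$ and $q_0\precsim_1 q_0$, which is possible exactly when $r'\le r$ and $r'+s'\le r+s$) coincides with the paper's. Finally, your treatment of (i) is more explicit than the paper's, which simply identifies the image of $\W(J)$ with $0\times\N$ under the order-embedding of \autoref{prop}: you check directly that the diagonalization can be witnessed with coefficients in $M_n(J)$ by combining the identity $A=XAY$ from \autoref{prop} with the fact that $J$ is an ideal, and that $1$-simplicity from \autoref{parag:J1Simp} identifies all nonzero diagonal entries inside $\W(J)$; this is correct and fills in a step the paper leaves implicit.
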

\begin{proof}
	Take $A\in M_n(R)$. Using \autoref{lem:matricesovernearlysimple}, we find invertible matrices $U,V$ and a diagonal matrix $D\in M_n(R)$ such that $UAV=D$. We may assume that $D$ is of the form
	\[
	D=\text{diag} (d_1,\dots , d_r, d_{r+1}, \dots , d_{r+s}, 0,\dots , 0)
	\]
	for some $d_1,\dots, d_r\in R\setminus J$ and $d_{r+1},\dots , d_{r+s}\in J\setminus \{ 0 \}$. Let
	\[
	\psi\colon M_\infty (R)\to \N\times \N
	\]
	be the map defined by $\psi (A) := (r,s)$.
	
	To see that $\psi (A)$ does not depend on the choice of $U$ and $V$, set 
	\[
	C : =R/d_{r+1}R\oplus \ldots \oplus R/d_{r+s}R\oplus R^{n-r-s}
	\]
	and consider the commutative diagram
	\[
	\xymatrix{
		R^n  \ar@{->}[r]^{A}\ar@{<-}[d]_{V}^{\cong}  & R^n \ar@{->}[r] \ar@{->}[d]_{U}^{\cong}  &  R^n/AR^n \ar@{->}[d]^{\cong} \ar@{->}[r] & 0 \\
		R^n \ar@{->}[r]^{D} & R^n \ar@{->}[r] & C \ar@{->}[r] & 0  } 
	\]
	
	Choosing  $a\in J\setminus \{0\}$, we have $C\cong (R/aR)^s\oplus R^{n-r-s}$ (by \cite[Lemma~4.2]{Pun2001}). Thus, for any choice of invertible matrices $U',V'$ and diagonal matrix $D'$ such that $D'=U'AV'$ with ranks $(r',s')$, one gets 
	\[
	(R/aR)^s\oplus R^{n-r-s}\cong (R/aR)^{s'}\oplus R^{n-r'-s'}.
	\]
	At this point we can use Puninski's Theorem \cite[Theorem 9.19]{Fac1998} asserting that every finitely presented right module $M$ over a uniserial ring is the direct sum of cyclic uniserial modules, and any two decompositions of $M$ as direct sums of cyclic modules are isomorphic. Using this result we immediately deduce that $s=s'$ and $n-r-s= n-r'-s'$, and thus $r= r'$.   
		
	Next we show that $A\precsim_1 B$ implies $\psi (A) \le \psi(B)$, where recall that
	\[
	(r',s')\le (r,s) :\iff r'+s' \le r+s \quad \mathrm{ and }\quad  r'\le r.
	\]
	
	Thus, let $A,B$ be such that $A\precsim_1 B$, and write $\psi (A)= (r_A, s_A)$ and $\psi (B)= (r_B , s_B)$. We may assume that $A,B\in M_n(R)$, and that there are matrices $X,Y\in M_n(R)$ such that $A=XBY$. 
	
	Let $\pi$ be the quotient map $R\to R/J$. We have 
	\[
	\pi (A)=\pi (X)\pi(B)\pi (Y)\text{ in }M_n(R/J)
	\]
	and, therefore, 
	\[
	r_A=\text{rank}_{R/J} (\pi (A))\leq \text{rank}_{R/J} (\pi (B))=r_B.
	\]
	
	Let us now prove that $\psi (BY)\le \psi (B)$ for all $B,Y\in M_n(R)$. Following the notation above, we write 
	$\psi (B)= (r_B,s_B)$ and $\psi (BY)= (r_{BY},s_{BY})$.
	
	The previous argument shows that $r_{BY}\le r_B$, so it remains to check that 
	\[
	r_{BY}+s_{BY}\le r_B+s_B.
	\]
	
	Since $R^n/BR^n$ is a quotient of $R^n/(BY)R^n$, we obtain a surjective module homomorphism
	\[
	(R/aR)^{s_{BY}}\oplus R^{n-r_{BY}-s_{BY}} \longrightarrow R^{n-r_B-s_B},
	\]
	where $a\in J \setminus \{0\}$. Using that $R^{n-r_B-s_B}$ is free, we find a right $R$-module $M$ such that
	\[
	(R/aR)^{s_{BY}}\oplus R^{n-r_{BY}-s_{BY}} \cong R^{n-r_B-s_B}\oplus M.
	\]
	By Puninski's Theorem \cite[Theorem 9.19]{Fac1998} we have that $n-r_{BY}-s_{BY}\ge n-r_B-s_B$. Thus, we get $r_{BY}+s_{BY}\le r_B+s_B$, as desired.
	
	Note that, by symmetry, we have $\psi (XB)\le \psi (B)$ for all $X,B\in M_n(R)$. Consequently, one gets $\psi (A)\le \psi (B)$ whenever $A\precsim_1 B$.
	
	Conversely, it is also easy to see (by looking at their associated diagonal matrices) that $A\precsim_1 B$ whenever $\psi (A)\le \psi (B)$. 
	
	Thus, $\psi$ induces an order-isomorphism from $\W (R)$ to $\N\times\N$ with the stated order. This shows (ii).
	
	To see (i), note that the image of $\W (J)$ through this order-isomorphism corresponds to $0\times\N\cong\N$. The induced order in this submonoid corresponds to the usual order.
\end{proof}

%=========================================================
\begin{remark}\label{rmk:ComplNonComp}
	Let $R$ be a nearly simple domain, and let $J$ be its Jacobson radical. Then, $V(J)=0$ and $\W (J)=\N$ by \autoref{thm:computeSJ}.
	
	Thus, any element $x\in \W (J)$ satisfies that, whenever $x\leq y$, there exists $c$ with $x+c=y$. However, there are no nonzero elements in $V(J)$.
	
	In connection with \autoref{lem:alg-order}, the above shows that elements in $W(J)$ can be complented; though, they are not coming from $V(J)$.
\end{remark}

%=========================================================
\begin{remark}\label{rmk:WJ}
	It follows from \autoref{thm:computeSJ} above that, if $R$ is a nearly simple domain, the monoid $\WLambda (J)$ associated to its Jacobson radical $J=J(R)$ is indistinguishable from $\WLambda (D)$ with $D$ a division ring; see \autoref{dfn:IntR}.
	
	However, note that every sequence $(x_n)$ defining an element in $\SR (J)$ induces a countably generated projective module $P$ over $R$ such that $P=PJ(R)$. Thus, we have $P=0$. This shows that $\SR (J)\cong 0$.
	
	On the other hand, $\SR (D)\cong \ol{\N}$ for any division ring. Consequently, $\SR (R) $ distinguishes these two families of rings.
\end{remark}

%=========================================================
%=========================================================

\end{document}